\pgfplotsset{width=8cm,compat=1.9}
  \def\@floatboxreset{\reset@font\small\@setminipage}%
\newtheorem{lemma}{Lemma}
\newtheorem{proposition}{Proposition}
\newtheorem{example}{Example}
\newcommand{\K}{{\mathbf K}}
\newcommand{\R}{{\mathbb R}}
\newcommand{\N}{{\mathbb N}}
\newtheorem{theorem}{Theorem}
\newtheorem{corollary}{Corollary}
\newcommand{\jacksoncoef}[2]{\lambda_{#1}^{#2}}
\newcommand{\jackpoly}{K^{\rm ja}}
\newcommand{\multijackpoly}{K}
\newcommand{\x}{\mathbf{x}}
\newcommand{\y}{\mathbf{y}}
\newcommand{\varu}{\mathbf{u}}
\newcommand{\rev}[1]{{\color{black}{#1}}}
\newcommand{\revv}[1]{{\color{black}{#1}}}
\title{The link between $1$-norm approximation and effective Positivstellensatze for the hypercube}
\author{
	\textbf{Etienne de Klerk}
	\thanks{Tilburg University,	e.deklerk@tilburguniversity.edu
	}	
	\and
\textbf{Juan C. Vera}
	\thanks{Tilburg University,	j.c.veralizcano@tilburguniversity.edu   }
}
\begin{document}
	\maketitle
	
	\begin{abstract}
The Schm\"udgen Positivstellensatz gives a certificate to verify positivity of a strictly positive polynomial $f$ on a compact, basic, semi-algebraic set $\mathbf{K} \subset \R^n$.
A Positivstellensatz of this type is called {\em  effective} if one may bound the degrees of the polynomials appearing in the
certificate in terms of properties of $f$.
If $\mathbf{K} = [-1,1]^n$ and $0 < f_\min := \min_{x \in \mathbf{K}} f(x)$, then the degrees of the polynomials appearing in the certificate may be bounded by $O\left(\sqrt{\frac{f_\max - f_\min}{f_\min}}\right)$,
where $f_\max := \max_{x \in \mathbf{K}} f(x)$,
as was recently shown by Laurent and Slot [\emph{Optimization Letters} 17:515--530, 2023].
The big-O notation suppresses dependence on $n$ and the degree $d$ of $f$.
In this paper we show a similar result, but with a better dependence on $n$ and $d$. In particular, our bounds depend on the $1$-norm of the
coefficients of $f$, that may readily be calculated.

		\keywords{Polynomial kernel method \and semidefinite programming \and Positivstellensatz}
	\end{abstract}
	
	\section{Introduction}

We first recall a \emph{Positivstellensatz} of Schm\"udgen \cite{Sch91}, which
gives representations for positive polynomials on a basic, closed, semi-algebraic set; see also, e.g., \cite{DP},\cite[Theorem 3.16]{sos}, \cite{Mar}.
\begin{theorem}[Schm\"udgen \cite{Sch91}]
\label{thm:Schmuedgen}
Consider the set   $\mathbf{K} = \{{\mathbf{x}} \in \mathbb{R}^n\mid  g_1(\mathbf{x})\geq 0,\dotsc,g_m(\mathbf{x}) \geq 0\}$
with $g_1,\dotsc,g_m \in\mathbb{R}[{\mathbf{x}}]$, where $\mathbb{R}[\mathbf{x}]$ denotes the ring of real polynomials
 with variables $\mathbf{x} = (x_1,\ldots,x_n)$.
	Assume that $\mathbf{K}$ is  compact.
 If $p \in \mathbb{R}[x]$ is positive on $\mathbf{K}$, then $p$ can be written as
 \begin{equation}\label{Schmuedgen_representation}
    p = \sum_{I \subseteq [m]} \sigma_I \prod_{i\in I} g_i,
 \end{equation}
where $[m] := \{1,\ldots,m\}$, and each $\sigma_I$ ($I \subseteq [m]$) belongs to the cone of sums-of-squares of polynomials,  denoted by $\Sigma[\mathbf{x}]$.
\end{theorem}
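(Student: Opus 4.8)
The plan is to deduce Theorem~\ref{thm:Schmuedgen} from the abstract representation theorem for Archimedean preorderings (a form of the Kadison--Dubois theorem) once the hypothesis that $\mathbf{K}$ is compact has been turned into the algebraic \emph{Archimedean} property of the preordering
\[
T \;:=\; \left\{\, \sum_{I \subseteq [m]} \sigma_I \prod_{i \in I} g_i \;:\; \sigma_I \in \Sigma[\mathbf{x}] \text{ for all } I \subseteq [m] \,\right\},
\]
whose elements are exactly the right-hand sides of \eqref{Schmuedgen_representation}. Recall that $T$ is \emph{Archimedean} if for every $q \in \mathbb{R}[\mathbf{x}]$ there is $N \in \mathbb{N}$ with $N - q \in T$; a short induction on $\deg q$ --- using that $a^2 + b^2 - 2ab \in \Sigma[\mathbf{x}] \subseteq T$ and that $M^2 - r^2 = (M-r)(M+r) \in T$ whenever $M \pm r \in T$ --- reduces this to the single requirement that $N - \sum_i x_i^2 \in T$ for some $N$. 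Granting that, the representation theorem states: if $p \in \mathbb{R}[\mathbf{x}]$ satisfies $\varphi(p) > 0$ for every ring homomorphism $\varphi \colon \mathbb{R}[\mathbf{x}] \to \mathbb{R}$ with $\varphi(T) \subseteq \mathbb{R}_{\ge 0}$, then $p \in T$. Such $\varphi$ are exactly the evaluations $\mathrm{ev}_{\mathbf{x}}$, $\mathbf{x} \in \mathbb{R}^n$ (positivity on $\Sigma[\mathbf{x}]$ forces $\varphi$ to be the identity on the constants), and $\mathrm{ev}_{\mathbf{x}}(T) \subseteq \mathbb{R}_{\ge 0}$ exactly when $g_i(\mathbf{x}) \ge 0$ for all $i$, i.e.\ when $\mathbf{x} \in \mathbf{K}$. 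Hence the hypothesis of the representation theorem is precisely that $p > 0$ on $\mathbf{K}$, and its conclusion is precisely \eqref{Schmuedgen_representation}.

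It remains to establish the Archimedean property, which is the only place compactness of $\mathbf{K}$ is used. Pick $N$ large enough that $g_0 := N - \sum_i x_i^2$ is strictly positive on the compact set $\mathbf{K}$. I would then invoke Stengle's Positivstellensatz in its classical ``with denominators'' form --- which is proved by purely real-algebraic means and does not presuppose Theorem~\ref{thm:Schmuedgen} --- to obtain $p, q \in T$ with $p\, g_0 = 1 + q$. The remaining work is to clear the ``denominator'' $p$: combining this identity with the free membership $N - g_0 = \sum_i x_i^2 \in \Sigma[\mathbf{x}] \subseteq T$ and with the closure of $T$ under sums and products, one manipulates it into a genuine relation $N' - \sum_i x_i^2 \in T$ for some larger $N'$. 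By the reduction of the first paragraph, $T$ is then Archimedean.

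The main obstacle is exactly this last denominator-clearing step: it is where the topological fact that $\mathbf{K}$ is bounded must be converted into an explicit algebraic certificate, and it carries the real content of the theorem. One has to be careful about circularity --- simply declaring $N - \sum_i x_i^2 \in T$ because it is nonnegative on $\mathbf{K}$ is itself an instance of Schm\"udgen's theorem --- so the circle must be broken through the weaker Stengle certificate together with the globally coercive shape of $g_0$. The other ingredients are standard: the degree induction behind ``$T$ Archimedean whenever $N - \sum_i x_i^2 \in T$'', and the Kadison--Dubois representation theorem itself, whose proof extends a $T$-positive linear functional to a maximal one via Zorn's lemma, matches maximal Archimedean preorderings with point evaluations, and separates $p$ from $T$ by Hahn--Banach when $p \notin T$ (using that Archimedean preorderings are closed). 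I note finally that for the quantitative goals of this paper one wants a \emph{constructive} substitute for the denominator-clearing step --- for instance reducing the compact case to a polytope through the coordinates $\prod_{i \in I} g_i$ and then invoking P\'olya's or Handelman's theorem, in the spirit of Schweighofer's proof --- since only such an argument yields explicit degree bounds on the $\sigma_I$.
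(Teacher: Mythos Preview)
The paper does not prove Theorem~\ref{thm:Schmuedgen}; it is quoted as a background result from \cite{Sch91}, with pointers to \cite{DP}, \cite{sos}, \cite{Mar} for expositions. So there is no in-paper argument to compare your proposal against.

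On its own merits, your outline follows the standard algebraic proof due to W\"ormann: reduce to showing the preordering $T$ is Archimedean, then invoke the abstract Kadison--Dubois/Jacobi--Prestel representation theorem. The architecture is correct, and you are right that Stengle's Positivstellensatz (which does not presuppose Schm\"udgen) yields $p,q \in T$ with $p\,g_0 = 1+q$ for $g_0 = N - \sum_i x_i^2$. However, the passage you yourself flag as ``the main obstacle'' is left as a genuine gap: you write that ``one manipulates'' the identity $p g_0 = 1+q$, together with $N - g_0 = \sum_i x_i^2 \in T$ and closure of $T$ under sums and products, into $N' - \sum_i x_i^2 \in T$, but supply no manipulation. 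This is precisely W\"ormann's lemma, and it is not routine: naive attempts to ``solve for $g_0$'' from $pg_0 = 1+q$ founder because $T$ is closed under neither subtraction nor division, and the free membership $N - g_0 \in T$ does not by itself let you absorb the factor $p$. Since you correctly observe that asserting $g_0 \in T$ directly would be circular, and since the remaining ingredients you list (the degree induction for Archimedeanity, the representation theorem) are indeed standard, the proposal identifies the right proof but stops short of executing its one nontrivial step.
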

A polynomial that allows a representation of the type \eqref{Schmuedgen_representation} is said to lie in the pre-ordering associated with the (generating) polynomials $g_1,\dotsc,g_m$, denoted by
$\mathcal{T}(g_1,\ldots,g_m)$.

We denote the real polynomials of degree at most $d$ by $\mathbb{R}[\mathbf{x}]_d$, and define the
truncated pre-ordering of degree $r \in \mathbb{N}$ by
\begin{equation}\label{eq:trubcated preordering def}
  \mathcal{T}(g_1,\ldots,g_m)_r := \left\{ p \in \mathbb{R}[\mathbf{x}] \; : \; p = \sum_{I \subseteq [m]} \sigma_I \prod_{i\in I} g_i, \; \mbox{deg}\left(\sigma_I \prod_{i\in I} g_i\right) \le r, \; \sigma_I \in \Sigma[x] \; \forall I \in [m]\right\}.
\end{equation}
For the hypercube $\mathbf{K} = [-1,1]^n$, described by the polynomial inequalities $1-x_1^2\ge 0 ,\ldots,1-x_n^2\ge 0$, we consider polynomials that allow a Schm\"udgen-type representation of bounded degree $r$:
\begin{equation}\label{eqSch}
p(\mathbf{x}) =  \sum_{I \subseteq [n]} \sigma_I(\mathbf{x}) \prod_{i\in I} (1-x_i^2) \in \mathcal{T}(1-x_1^2,\ldots,1-x_n^2)_{r},
\end{equation}
where the polynomials $\sigma_I$ are sum-of-squares polynomials with degree at most $r-2|I|$ (to ensure  that the degree of each term in the sum in \eqref{eq:trubcated preordering def} is at most $r$).

We call a \emph{Positivstellensatz} effective if one may bound the degree $r$ in terms of properties of $p$.
The best bound of this type is due to Slot and Laurent \cite{Laurent2021} (with an alternative proof by Bach and Rudi \cite{Bach_Rudi}).

\begin{theorem}[Laurent and Slot \cite{Laurent2021}]
\label{thm:Laurent_Slot1}
Assume $f \in \mathbb{R}[x]_d$ and $f_\min := \min_{\mathbf{x} \in [-1,1]^n} f(\mathbf{x}) > 0$. Then
\[
f \in \mathcal{T}(1-x_1^2,\ldots,1-x_n^2)_{(r+1)n}
\]
provided that
\[
r \ge \max \left\{\pi d \sqrt{2n}, \frac{1}{\sqrt{f_\min}}\sqrt{C(n,d)(f_\max - f_\min)} \right\},
\]
where $C(n,d) > 0$ depends only on $n$ and $d$, and may be chosen to be polynomial in $n$ for fixed $d$, or polynomial in $d$ for fixed $n$.
In particular
\[
C(n, d)\le  2\pi^2d^2n2^{n/2}(d + 1)^n \mbox{ and } C(n, d) \le 2\pi^2 d^2n2^{d/2}(n + 1)^d.
\]
\end{theorem}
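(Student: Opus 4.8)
The plan is to use the polynomial kernel method with a Jackson kernel. On $[-1,1]$, work with the Chebyshev weight $d\mu(t)=\tfrac1\pi(1-t^2)^{-1/2}\,dt$, the Chebyshev polynomials $\Cheby{k}$ (orthogonal, with $\Chebyinner{\Cheby{j}}{\Cheby{k}}=\tfrac12\delta_{jk}$ for $j,k\ge1$ and $\Chebyinner{\Cheby{0}}{\Cheby{0}}=1$), and the univariate Jackson kernel $\jackpoly_r(x,y)=\sum_{k=0}^{r}\jacksoncoef{k}{(r)}(2-\delta_{k0})\Cheby{k}(x)\Cheby{k}(y)$, whose coefficients are the Jackson (Fej\'er-type) coefficients: $\jacksoncoef{0}{(r)}=1$, $\jacksoncoef{k}{(r)}\ge0$, and $\jackpoly_r\ge0$ on $[-1,1]^2$. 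On the cube, put the product kernel $\jackkernel_r(\x,\y):=\prod_{i=1}^{n}\jackpoly_r(x_i,y_i)$ and the operator $\Lambda_r(p)(\x):=\int_{[-1,1]^n}p(\y)\,\jackkernel_r(\x,\y)\,d\mu^n(\y)$, with $\mu^n$ the $n$-fold product of $\mu$. The argument rests on two opposite features of $\Lambda_r$: (i) it maps nonnegativity on $[-1,1]^n$ into membership in the truncated pre-ordering $\mathcal T(1-x_1^2,\dots,1-x_n^2)_{(r+1)n}$; (ii) on $\polys{\x}_d$ it is invertible, with $\Lambda_r^{-1}$ close to the identity. Granting (i) and (ii), for $f$ with $f_{\min}>0$ one sets $h:=\Lambda_r^{-1}(f)\in\polys{\x}_d$, so that $\Lambda_r(h)=f$; if $h\ge0$ on $[-1,1]^n$, then $f=\Lambda_r(h)\in\mathcal T(1-x_1^2,\dots,1-x_n^2)_{(r+1)n}$ by (i), which is exactly the assertion.

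To establish (i), fix $\y\in[-1,1]^n$: each factor $x_i\mapsto\jackpoly_r(x_i,y_i)$ is a univariate polynomial of degree $\le r$, nonnegative on $[-1,1]$, so by the Markov--Luk\'acs representation it equals $\sigma_{0,i}(x_i)+(1-x_i^2)\sigma_{1,i}(x_i)$ with $\sigma_{0,i},\sigma_{1,i}\in\sos{x_i}$, $\deg\sigma_{0,i}\le r+1$, $\deg\sigma_{1,i}\le r-1$. Expanding the product over the $n$ disjoint variable blocks gives $\jackkernel_r(\cdot,\y)=\sum_{I\subseteq[n]}\big(\prod_{i\in I}\sigma_{1,i}\prod_{i\notin I}\sigma_{0,i}\big)\prod_{i\in I}(1-x_i^2)$, whose $I$-th SOS multiplier has degree $\le\sum_{i\in I}(r-1)+\sum_{i\notin I}(r+1)=n(r+1)-2|I|$; hence $\jackkernel_r(\cdot,\y)\in\mathcal T(1-x_1^2,\dots,1-x_n^2)_{(r+1)n}$. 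Since this truncated pre-ordering is a closed convex cone in the finite-dimensional space $\polys{\x}_{(r+1)n}$, and since for $q\ge0$ on $[-1,1]^n$ the polynomial $\Lambda_r(q)=\int q(\y)\,\jackkernel_r(\cdot,\y)\,d\mu^n(\y)$ is a limit of nonnegative combinations of the $\jackkernel_r(\cdot,\y)$, it follows that $\Lambda_r(q)\in\mathcal T(1-x_1^2,\dots,1-x_n^2)_{(r+1)n}$.

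For (ii), orthogonality of the $\Cheby{k}$ yields $\Lambda_r(\Cheby{\alpha})=\big(\prod_{i=1}^{n}\jacksoncoef{\alpha_i}{(r)}\big)\Cheby{\alpha}$ for every multi-index $\alpha$ with $\|\alpha\|_\infty\le r$, where $\Cheby{\alpha}(\x):=\prod_i\Cheby{\alpha_i}(x_i)$; in particular $\Lambda_r$ fixes constants. By the known bound $0\le1-\jacksoncoef{k}{(r)}\le\pi^2k^2/r^2$ on the Jackson coefficients, for the multi-indices $\alpha$ of $f$ (so $\alpha_i\le\|\alpha\|_\infty\le|\alpha|\le d\le r$, using $r\ge\pi d\sqrt{2n}$) one has $0\le1-\prod_i\jacksoncoef{\alpha_i}{(r)}\le\sum_i(1-\jacksoncoef{\alpha_i}{(r)})\le n\max_{k\le d}(1-\jacksoncoef{k}{(r)})\le\pi^2nd^2/r^2$, which is $\le\tfrac12$ when $r\ge\pi d\sqrt{2n}$; then all $\jacksoncoef{\alpha_i}{(r)}>0$, so $\Lambda_r$ is a bijection of $\polys{\x}_d$ onto itself, and $\big(\prod_i\jacksoncoef{\alpha_i}{(r)}\big)^{-1}-1\le2\pi^2nd^2/r^2$. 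Writing $f=\sum_{|\alpha|\le d}c_\alpha\Cheby{\alpha}$, the polynomial $h=\Lambda_r^{-1}(f)$ satisfies $\Lambda_r(h)=f$ and, since $\jacksoncoef{0}{(r)}=1$, $h-f=\sum_{1\le|\alpha|\le d}\big((\prod_i\jacksoncoef{\alpha_i}{(r)})^{-1}-1\big)c_\alpha\Cheby{\alpha}$. On $[-1,1]^n$ we have $|\Cheby{\alpha}|\le1$, and for $\alpha\ne0$ the coefficient $c_\alpha$ does not change when $f$ is shifted by a constant, so for every $c\in\R$, $|c_\alpha|=2^{|\mathrm{supp}\,\alpha|}\,|\Chebyinner{f-c}{\Cheby{\alpha}}|\le2^{|\mathrm{supp}\,\alpha|}\Chebynorm{f-c}\,\Chebynorm{\Cheby{\alpha}}=2^{|\mathrm{supp}\,\alpha|/2}\Chebynorm{f-c}\le2^{|\mathrm{supp}\,\alpha|/2}\infnorm{f-c}$, and minimising over $c$ gives $|c_\alpha|\le2^{|\mathrm{supp}\,\alpha|/2}\cdot\tfrac12(f_{\max}-f_{\min})$. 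Summing over the multi-indices with $1\le|\alpha|\le d$ — of which there are at most $(d+1)^n$, and also at most $(n+1)^d$ — and bounding $2^{|\mathrm{supp}\,\alpha|/2}$ by $2^{n/2}$, respectively by $2^{d/2}$, one obtains $\infnorm{h-f}\le C(n,d)(f_{\max}-f_{\min})/r^2$ with $C(n,d)$ of the stated form. The second hypothesis reads $r^2\ge C(n,d)(f_{\max}-f_{\min})/f_{\min}$, i.e.\ $\infnorm{h-f}\le f_{\min}$, whence $h\ge f-\infnorm{h-f}\ge f_{\min}-\infnorm{h-f}\ge0$ on $[-1,1]^n$; by (i), $f=\Lambda_r(h)\in\mathcal T(1-x_1^2,\dots,1-x_n^2)_{(r+1)n}$.

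The step I expect to be the main obstacle is the quantitative estimate in (ii): getting the bound on $\infnorm{h-f}$ with $C(n,d)$ in exactly the claimed shape, polynomial in $n$ for fixed $d$ and polynomial in $d$ for fixed $n$. This pins down the kernel, in particular the quadratic decay $1-\jacksoncoef{k}{(r)}=O(k^2/r^2)$ of the Jackson coefficients — a slower decay would turn the $\sqrt{(f_{\max}-f_{\min})/f_{\min}}$ scaling into something worse — and it requires controlling the $1$-norm of the Chebyshev coefficients of $f$, which is essentially a Lebesgue constant for multivariate Chebyshev expansions; the two forms $2^{n/2}(d+1)^n$ and $2^{d/2}(n+1)^d$ of $C(n,d)$ are what one gets by counting the relevant multi-indices and their supports with $n$ fixed or with $d$ fixed. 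A subsidiary technical point is the degree bookkeeping in (i) — checking that the tensorised Markov--Luk\'acs decomposition of the order-$r$ kernel really lands inside degree $(r+1)n$ — together with the closedness argument that legitimises integrating $\jackkernel_r(\cdot,\y)$ against the nonnegative weight $q(\y)\,d\mu^n(\y)$ while staying in the truncated pre-ordering.
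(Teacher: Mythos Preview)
This theorem is not proved in the present paper; it is quoted from Laurent and Slot \cite{Laurent2021} as background for the paper's own results. Your proposal is a faithful reconstruction of their argument: the inversion $h=\Lambda_r^{-1}(f)$, the Jackson-coefficient estimate $1-\jacksoncoef{k}{r}\le\pi^2k^2/r^2$ combined with Cauchy--Schwarz bounds on the Chebyshev coefficients of $f$ to control $\infnorm{h-f}$, and the tensorised Markov--Luk\'acs decomposition placing $\Lambda_r(h)$ in $\mathcal T(1-x_1^2,\dots,1-x_n^2)_{(r+1)n}$ are precisely the ingredients of the original proof, and your degree bookkeeping and constant tracking recover the two stated forms of $C(n,d)$.

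One minor remark on step (i): you rely on closedness of the truncated pre-ordering to pass from Riemann sums to the integral $\int q(\y)\,\jackkernel_r(\cdot,\y)\,d\mu^n(\y)$. This is true for the hypercube but not entirely trivial (a finite sum of closed cones need not be closed). The cleaner route, used by Laurent and Slot and reproduced in this paper as Proposition~\ref{prop:cubature}, is Tchakaloff's theorem: since the integrand is a polynomial in $\y$ of bounded degree, there is an \emph{exact} cubature formula with finitely many nodes $\y^{(\ell)}\in[-1,1]^n$ and nonnegative weights, so the integral is a genuine finite nonnegative combination of the polynomials $\jackkernel_r(\cdot,\y^{(\ell)})$, and no closedness or limiting argument is needed.
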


One may formulate this result in a different way after introducing the following semidefinite programming (SDP) approximation
of $f_\min := \min_{\mathbf{x} \in [-1,1]^n} f(\mathbf{x})$:
\begin{equation}\label{Schmuedgen_SDP}
  f_{(r)} := \sup \left\{ \lambda \; | \; f - \lambda \in  \mathcal{T}(1-x_1^2,\ldots,1-x_n^2)_{r}\right\} \le f_\min.
\end{equation}
In particular, one has the following result.

\begin{theorem}[Laurent and Slot \cite{Laurent2021}]
\label{thm:Laurent_Slot2}
One has
\[
f_\min -  f_{((r+1)n)} \le  \frac{1}{r^2}C(n,d)(f_\max - f_\min) \quad \forall r \ge \pi d \sqrt{2n},
\]
where $C(n,d)$ is as in Theorem \ref{thm:Laurent_Slot1}.
\end{theorem}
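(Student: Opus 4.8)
The plan is to obtain Theorem~\ref{thm:Laurent_Slot2} as an immediate corollary of Theorem~\ref{thm:Laurent_Slot1} by applying the latter to a suitably shifted copy of $f$, exploiting the standard equivalence between the optimization formulation \eqref{Schmuedgen_SDP} and the feasibility statement of Theorem~\ref{thm:Laurent_Slot1}. Fix $r \ge \pi d\sqrt{2n}$. If $f$ is constant (so that $f_\max = f_\min$), then $f - f_\min = 0 \in \mathcal{T}(1-x_1^2,\ldots,1-x_n^2)_{(r+1)n}$, hence $f_{((r+1)n)} \ge f_\min$; combined with $f_{((r+1)n)} \le f_\min$ from \eqref{Schmuedgen_SDP} this gives $f_\min - f_{((r+1)n)} = 0$, which matches the right-hand side. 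So from now on I would assume $f_\max > f_\min$.

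Next I would introduce the shift
\[
\lambda := f_\min - \frac{1}{r^2}\,C(n,d)\,(f_\max - f_\min),
\]
and consider $g := f - \lambda \in \mathbb{R}[x]_d$. Subtracting a constant does not increase the degree, so indeed $g \in \mathbb{R}[x]_d$, and one computes $g_\min = f_\min - \lambda = \tfrac{1}{r^2}C(n,d)(f_\max - f_\min) > 0$ together with $g_\max - g_\min = f_\max - f_\min$. I would then verify that $g$ meets the hypotheses of Theorem~\ref{thm:Laurent_Slot1} for this same value of $r$: the bound $r \ge \pi d\sqrt{2n}$ holds by assumption, while
\[
\frac{1}{\sqrt{g_\min}}\sqrt{C(n,d)(g_\max - g_\min)} = \frac{\sqrt{C(n,d)(f_\max - f_\min)}}{\sqrt{\tfrac{1}{r^2}C(n,d)(f_\max - f_\min)}} = r,
\]
so the second lower bound on $r$ required by Theorem~\ref{thm:Laurent_Slot1} is satisfied with equality.

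Applying Theorem~\ref{thm:Laurent_Slot1} to $g$ then yields $f - \lambda = g \in \mathcal{T}(1-x_1^2,\ldots,1-x_n^2)_{(r+1)n}$, so $\lambda$ is feasible for the supremum defining $f_{((r+1)n)}$ in \eqref{Schmuedgen_SDP}, whence $f_{((r+1)n)} \ge \lambda$. Rearranging gives
\[
f_\min - f_{((r+1)n)} \le f_\min - \lambda = \frac{1}{r^2}\,C(n,d)\,(f_\max - f_\min),
\]
which is exactly the claimed estimate.

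I do not expect a genuine obstacle here: the whole argument is the routine ``optimization value versus membership'' translation combined with a constant shift, and the only points deserving a word of care are the degenerate constant case (dispatched above) and the observation that subtracting $\lambda$ keeps $g$ in $\mathbb{R}[x]_d$ with $g_\min>0$, so that Theorem~\ref{thm:Laurent_Slot1} genuinely applies. The substantive content is entirely inside Theorem~\ref{thm:Laurent_Slot1}.
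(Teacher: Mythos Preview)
The paper does not supply its own proof of this statement; it is quoted from \cite{Laurent2021} and introduced only with the remark that ``one may formulate this result in a different way'' after defining the SDP bound \eqref{Schmuedgen_SDP}. Your derivation is exactly the standard translation between the membership statement of Theorem~\ref{thm:Laurent_Slot1} and the optimization formulation \eqref{Schmuedgen_SDP}, and it is correct as written; there is nothing to compare against beyond noting that this shift argument is precisely what the paper's phrasing alludes to.
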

	
	\subsection*{Contributions}
	In this paper we prove similar results to Theorems \ref{thm:Laurent_Slot1} and \ref{thm:Laurent_Slot2}.
We will consider a stronger SDP relaxation than \eqref{Schmuedgen_SDP}, by considering the truncated pre-ordering
\[
\mathcal{T}(1\pm x_1,\ldots,1 \pm x_n)_{r} := \mathcal{T}(1+ x_1, 1 - x_1, \ldots,1+ x_n, 1 - x_n)_{r}
\]
 instead of $\mathcal{T}(1-x_1^2,\ldots,1-x_n^2)_{r}$.
\rev{
As noted by, e.g.\, \revv{Powers and Reznick \cite[p.\ 4682]{PR2} and} Baldi and Slot \cite[\S 6]{Baldi_Slot}, these two truncated pre-orderings are closely related through the identities:
\begin{align*}
    1 \pm x_i & = \frac{1}{2} \left( (1 \pm x_i)^2 + 1 - x_i^2 \right) \\
    1 - x_i^2 & = (1-x_i)(1+x_i).
\end{align*}
In particular, the identities imply
\begin{equation}\label{eq:preorderings relation}
 \mathcal{T}(1-x_1^2,\ldots,1-x_n^2)_{r} \subset \mathcal{T}(1\pm x_1,\ldots,1 \pm x_n)_{r} \subset  \mathcal{T}(1-x_1^2,\ldots,1-x_n^2)_{2r}.
\end{equation}
Of course,  without truncation, the two pre-orderings coincide, i.e.\ $\mathcal{T}(1\pm x_1,\ldots,1 \pm x_n) = \mathcal{T}(1-x_1^2,\ldots,1-x_n^2)$.
}

Thus we consider the SDP lower bound on $f_\min$ given by
\begin{equation}\label{Schmuedgen_SDP2}
  \hat f_{(r)} := \sup \left\{ \lambda \; | \; f - \lambda \in  \mathcal{T}(1\pm x_1,\ldots,1 \pm x_n)_{r}\right\} \ge f_{(r)}.
\end{equation}
\rev{Due to the relation \eqref{eq:preorderings relation} between the two truncated pre-orderings, one has $f_{(r)} \le \hat f_{(r)} \le f_{(2r)}$.}

We will show \revv{(see Theorem \ref{thm:error_bound1} and Proposition \ref{prop:Theta_bound_nr})} that
\begin{equation}\label{eq:main result}
  f_\min -  \hat f_{(rn)} \le \|f\|_{1, T}\frac {\pi^2 d^2}{(r+2)^2} \quad \forall r \ge \pi d,
\end{equation}
where $\|f\|_{1, T}$ is the sum of the absolute values of the coefficients of $f$ in the  Chebyshev basis, i.e.\ the $1$-norm of the coefficients in
this basis.
This result is clearly of the same flavor as Theorem \ref{thm:Laurent_Slot2}, but the right-hand-side may be computed in an elementary way, and does not
explicitly depend on $n$. Moreover, the norm $\|f\|_{1,T}$ can be of the same order as $(f_\max - f_\min)$ for some classes of polynomials.
\rev{
In particular, as discussed in Section \ref{sec:norms of polynomials}, this is the case for the standard polynomial optimization reformulation of the maximum cut problem in graphs.
}
Furthermore, by comparing different norms of polynomials of bounded degrees, one may use our result to derive a result very similar to Theorem \ref{thm:Laurent_Slot2}.

We will also show an upper bound of the form $f_\min -  \hat f_{(r)} \le \Theta^{r}_{n,d}\|f\|_{1, T}$, that is valid for all $r$ (not only multiples of $n$),
 where the quantity $\Theta^{r}_{n,d}$ depends
on $(r,n,d)$ only, and may be upper bounded by solving an SDP problem;  \revv{see Theorem \ref{thm:error_bound1}}. Thus an upper bound on $\Theta^{r}_{n,d}\|f\|_{1, T}$ may readily be computed if
tables of SDP upper bounds on $\Theta^{r}_{n,d}$ are given. We give such  (partial) tables in an appendix to this paper for small values of $n$.

\revv{
\subsection*{Outline}
Our paper is structured as follows.
Section \ref{sec:preliminaries} contains notation and background material, including a review of some properties of Chebyshev polynomials (\S \ref{subsec:Chebyshev}), and an
overview of relations between some (equivalent) norms on spaces of polynomials of fixed degree (\S \ref{sec:norms of polynomials}).
We derive a key technical lemma in Section \ref{sec:key lemma} that relates the SDP hierarchy  errors $f_\min -  \hat f_{(r)}$ to the $1$-norm distance of the objective function to the truncated pre-ordering $\mathcal{T}(1\pm x_1,\ldots,1 \pm x_n)_{r}$. After we established the link with $1$-norm approximation, we explore the implications from the literature on projections of polynomials onto sum-of-squares polynomials in the $1$-norm in Section \ref{sec:1 norm projections}.
In particular, we show there how results by Lasserre and Netzer  \cite{Lasserre_Netzer_2006}, and Lasserre \cite{Lass_arxiv}, can be combined with our approach to yield insights into the convergence rate of the SDP hierarchies \eqref{Schmuedgen_SDP} and \eqref{Schmuedgen_SDP2}.
Our main results are presented in Sections \ref{sec:kernel approximation} and \ref{sec:Jackson} where we show how to obtain
convergence rates by using suitable kernels of the Jackson type. In particular, we prove the error bound \eqref{eq:main result} there in Theorem \ref{thm:error_bound1} and Proposition \ref{prop:Theta_bound_nr}. In the penultimate Section \ref{sec:SDP bounds}, we show how to bound the aforementioned quantity $\Theta^{r}_{n,d}$ numerically via the solution of an SDP problem. We conclude with some discussion on future research directions and related problems in Section \ref{sec:conclusion}.
}

	\section{Notation and preliminaries}
\label{sec:preliminaries}

\subsection{Further notation}
We will denote $\N_0 = \N \cup \{0\}$, and
\[
\mathbb{N}^n_d := \left\{ \alpha \in (\mathbb{N}_0)^n \; | \; |\alpha| := \sum_{i=1}^n \alpha_i \le d\right\},
\]
and $\mathbf{x}^\alpha = x_1^{\alpha_1}\ldots x_n^{\alpha_n}$.
\rev{We will be considering polynomials of different types of variables, and will need to keep track of the degree of these polynomials with respect to only some of the variables. To do this, given a multivariate polynomial $p$ and a subset of the variables $S$, we define $\deg_S p$ as the \emph{degree of $p$ with respect to the variables in $S$}. That is, when all the variables not in $S$ are considered to be constants (or more correctly parameters). For instance let $p(\varu,\x,\y) = \varu^\alpha\x^\beta\y^\gamma$, then $\deg p = |\alpha|+|\beta| + |\gamma|$ while $\deg_\x p = |\beta|$ and $\deg_{\varu\x} p = |\alpha|+|\beta|$.}

If we consider functions of two distinct types of variables on the hypercube, say $(\mathbf{x},\mathbf{y})\in [-1,1]^n\times [-1,1]^n$, then
we define an associated truncated pre-ordering as
\rev{
\begin{eqnarray*}
   &&\mathcal{T}(1\pm x_1,\ldots,1 \pm x_n; 1\pm y_1,\ldots,1 \pm y_n)_{r_1,r_2} \\
   &&\qquad:= \left\{\sum_{I^-_\x,I^+_\x,I^-_\y,I^+_\y \subseteq [n]}
\sigma_{I^-_\x,I^+_\x,I^-_\y,I^+_\y }(\x,\y)\prod_{i \in I^-_\x}(1- x_i)\prod_{i \in I^+_\x}(1+ x_i)\prod_{i \in I^-_\y}(1- y_i)\prod_{i \in I^+_\y}(1+ y_i) \right\}, \\
  \end{eqnarray*}
where, for all $I^-_\x,I^+_\x,I^-_\y,I^+_\y \subseteq [n]$, one has $ \sigma_{I^-_\x,I^+_\x,I^-_\y,I^+_\y }\in \Sigma[\mathbf{x},\mathbf{y}]$, as well as
$
|I^-_\x|+ |I^+_\x|+ \mbox{deg}_\x \sigma_{I^-_\x,I^+_\x,I^-_\y,I^+_\y } \le r_1$, and
$|I^-_\y| +|I^+_\y| + \mbox{deg}_\y \sigma_{I^-_\x,I^+_\x,I^-_\y,I^+_\y } \le r_2$.
In other words, in this special truncated pre-ordering, the degree of each term is at most $r_1$ in the $\mathbf{x}$ variables, and at most $r_2$ in the $\mathbf{y}$ variables.}

	\subsection{Chebyshev polynomials}\label{subsec:Chebyshev}
\rev{In this section we review known results on Chebyshev polynomials for later use. The interested reader may find more information in the classic text by Rivlin \cite{Riv}.}
	
	Let $\mathbf{K} = [-1,1]$  and fix the measure $\mu$ on $\mathbf{K}$ defined by $\mathrm{d}\mu(x) = (\pi \sqrt{1-x^2})^{-1}\mathrm{d}x$, $x \in \mathbf{K}$. The Chebyshev polynomials of the first kind form a system of orthogonal polynomials \rev{for  $\mu$}. We will refer to the $k$-th Chebyshev polynomial of first kind as $T_k(x)$. We have, for $k \in \N_0$,
	\begin{align}
	T_{k}(x) & = \cos (k \arccos x ).
	\end{align}

	Define for $f,g : [-1,1] \rightarrow \mathbb{R}$
	\[ \langle f,g \rangle_{\mu} = \int_{-1}^1 \frac{f(x)g(x)}{\pi \sqrt{1-x^2} } \mathrm{d}x \]
	to obtain the following orthogonality relations for the Chebyshev polynomials of the first kind
	\begin{align}
	\langle T_k, T_m \rangle_{\mu} &= \frac{1+\delta_{k,0}}{2} \delta_{k,m}.
	\end{align}
We will denote the normalized Chebyshev polynomials by
\[
\hat T_0 = T_0, \; \hat T_k = \sqrt{2}T_k \; (k \in \mathbb{N}).
\]

	 It is straightforward to generalize the Chebyshev polynomials to the multivariate case by taking
products of univariate Chebyshev polynomials. Let $\mathbf{K} = [-1,1]^n$ and define
	\begin{equation}\label{mu cheb}
	  \mathrm{d}\mu(\textbf{x}) := \prod_{i=1}^{n} \frac{1}{\pi \sqrt{1-x_i^2}}\mathrm{d}\textbf{x}.
	\end{equation}

	Then, for $\alpha \in (\mathbb{N}_0)^n$ the corresponding (normalized) multivariate Chebyshev polynomial of the first kind is defined as
	\[
	\hat T_\alpha(\textbf{x}) = \prod_{i=1}^n \hat T_{\alpha_i}(x_i).
	\]
	
	The orthogonality relations extend in the following way
	
	\begin{equation*}
		\begin{aligned}
		\langle \hat T_\alpha, \hat T_\beta \rangle_{\mu} = \int_{\mathbf{K}} \hat T_\alpha(\textbf{x})\hat T_\beta(\textbf{x}) \mathrm{d}\mu(\textbf{x})
		 & = \prod_{i=1}^{n} \int_{-1}^{1} \frac{\hat T_{\alpha_i}(x_i) \hat T_{\beta_i}(x_i)}{\pi \sqrt{1-x_i^2}}\mathrm{d}x_i = \prod_{i= 1}^{n} \delta_{\alpha_i,\beta_i}.
		\end{aligned}
	\end{equation*}
 On has the following formula to convert from the univariate monomial basis to the Chebyshev basis:
\begin{equation}
x^k=2^{1-k}{\mathop{{\sum}'}_{\overset{j=0}{k-j \mbox{ even}}}^k}\binom{k}{(k-j)/2} T_j(x), \quad (k \in \N),
\label{eq.xnofT}
\end{equation}
where the prime at the sum symbol means the first term (at $j=0$ for even $n$) should be halved.
	
\subsection{Norms of polynomials}
\label{sec:norms of polynomials}
The $n$-variate polynomials of degree at most $d$, denoted by $\mathbb{R}[\mathbf{x}]_d$, may be
 associated with $\mathbb{R}^{{n+d \choose d}}$ via the polynomial coefficients in a
given basis.

Using the standard monomial basis, any $f \in \mathbb{R}[\mathbf{x}]_d$ may be written as
$f(\mathbf{x}) = \sum_{\alpha \in \N^n_d} f_\alpha \mathbf{x}^\alpha$, with associated  $1$-norm:
\[
\|f\|_1 := \sum_{\alpha \in \N^n_d} |f_\alpha|.
\]
Since the $1$-norm depends on the basis, we will use the notation  $\|\cdot\|_{1,T}$ (resp.\ $\|\cdot\|_{1,\hat T}$)   for the  Chebyshev (resp.\
 normalized Chebyshev) basis. Note that $\|f \|_{1,\hat T} \le \|f\|_{1, T} \le 2^{d/2}\|f\|_{1,\hat T}$ if $f \in \mathbb{R}[x]_d$ and
 $\|f \|_{1,\hat T} = \frac{1}{\sqrt{2}^{d}} \|f\|_{1, T}$ if $f$ is homogeneous of degree $d$.

\begin{lemma}
One has
 \begin{equation}\label{one_norm_conversion}
   \|f\|_{1, T} \le \|f\|_1 \quad \forall \; f \in \R[\x].
 \end{equation}
\end{lemma}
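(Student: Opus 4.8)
The plan is to prove the inequality $\|f\|_{1,T} \le \|f\|_1$ by reducing to a statement about a single monomial and then invoking triangle inequality, exploiting the fact that the conversion from the monomial basis to the Chebyshev basis is linear.

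First I would write $f(\mathbf{x}) = \sum_{\alpha \in \N^n_d} f_\alpha \mathbf{x}^\alpha$ in the monomial basis. Each monomial $\mathbf{x}^\alpha = \prod_{i=1}^n x_i^{\alpha_i}$ expands in the (univariate, then tensorized) Chebyshev basis via formula \eqref{eq.xnofT}, which I would apply coordinatewise and multiply out, so that $\mathbf{x}^\alpha = \sum_\beta c_{\alpha\beta} T_\beta(\mathbf{x})$ with $T_\beta(\mathbf{x}) = \prod_i T_{\beta_i}(x_i)$. The key numerical observation is that for each fixed $k$, the coefficients in \eqref{eq.xnofT} are nonnegative and sum to exactly $1$: indeed $2^{1-k} \sum_{j}' \binom{k}{(k-j)/2} = 1$ because $T_j(1) = 1$ for all $j$ and $1^k = 1$, i.e.\ evaluating the identity at $x=1$ shows the coefficients sum to one, while all binomial coefficients are manifestly nonnegative. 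Hence $\sum_\beta |c_{\alpha\beta}| = \sum_\beta c_{\alpha\beta} = 1$ for every $\alpha$ (the product over coordinates of things summing to $1$ still sums to $1$, with all terms nonnegative).

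Then by the triangle inequality in the $1$-norm on coefficient space,
\begin{equation*}
\|f\|_{1,T} = \Bigl\| \sum_{\alpha} f_\alpha \sum_\beta c_{\alpha\beta} T_\beta \Bigr\|_{1,T} \le \sum_\alpha |f_\alpha| \sum_\beta |c_{\alpha\beta}| = \sum_\alpha |f_\alpha| = \|f\|_1,
\end{equation*}
where the middle step uses $\sum_\beta |c_{\alpha\beta}| = 1$. This completes the argument.

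The only mildly delicate point — and the step I would be most careful about — is the bookkeeping with the primed sum in \eqref{eq.xnofT}, namely the halving of the $j=0$ term for even $k$; one must check that this halved coefficient is still nonnegative (it is) and that the "sum equals $1$" identity survives the halving (it does, precisely because the halving is what makes the Chebyshev expansion of $x^k$ correct, so evaluation at $x=1$ still gives $1$). Everything else is routine: nonnegativity of binomial coefficients, and the fact that a product of nonnegative sequences each summing to $1$ again sums to $1$ with nonnegative entries, which handles the passage from the univariate to the multivariate case.
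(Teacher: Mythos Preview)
Your proof is correct and follows essentially the same route as the paper: show that the Chebyshev expansion of a single monomial has nonnegative coefficients summing to $1$ (by evaluating \eqref{eq.xnofT} at $x=1$), hence $\|\x^\alpha\|_{1,T}=1$, and then conclude for general $f$ by the triangle inequality. Your attention to the primed-sum convention is a nice extra bit of care, but nothing substantive differs from the paper's argument.
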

\begin{proof}
 Since $T_j(1) = 1$ for all $j \in \N_0$,
 all the coefficients on the right-hand-side of
 \eqref{eq.xnofT} are positive and sum to one. This implies that, for any (multivariate) monomial $f(\x) = \x^\alpha$ with $\alpha \in \N^n$, one has
 $ \|f\|_{1, T} = \|f\|_1 = 1$. Thus, if $f(\x) = \sum_{\alpha \in \N^n_d} f_\alpha \x^\alpha$ is a general degree $d$ polynomial, one has (with slight abuse of notation):
 \[
   \|f\|_1 = \sum_{\alpha \in \N^n_d} |f_\alpha|
    =  \sum_{\alpha \in \N^n_d} \|f_\alpha \x^\alpha\|_{1,T}
    \ge   \left\|\sum_{\alpha \in \N^n_d} f_\alpha \x^\alpha\right\|_{1,T} \equiv \|f\|_{1,T},
 \]
 where the inequality is the triangle inequality.
\end{proof}

 We will only be concerned with the $1$-norm for different bases, but in general one may define the $p$-norm for any $p >0$:
 \[
\|f\|_p := \left(\sum_{\alpha \in \N^n_d}  |f_\alpha|^p\right)^{1/p}.
\]
 To avoid confusion with the usual $\ell_p$ norm, we will use the notation, for any $p \in \mathbb{N}$,
 \[
 \|f\|_{\ell_p} := \left(\int_{\mathbf{K}} |f(\mathbf{x})|^p d\mathbf{x}\right)^{1/p},
 \]
where we only consider the case $\mathbf{K} = [-1,1]^n$,
and the sup-norm:
 \[
 \|f\|_{\ell_\infty} := \sup_{\mathbf{K}} |f(\mathbf{x})|.
 \]
The relations between the (equivalent) norms $\|\cdot\|_p$ and $\|\cdot \|_{\ell_p}$ on $\mathbb{R}[x]_d$ is a classical problem that is still a topic of active research; see, e.g.\ \cite{Equiv_norms} for a recent overview.
Perhaps the best-known result along these lines is the Bohnenblust-Hille inequality:
\begin{equation}\label{Bohnenblust-Hille}
   \|f\|_p \le C_{d}\|f\|_{\ell_\infty} \quad \mbox{deg}(f) = d, \; p = \frac{2d}{d+1},
\end{equation}
where the constant $C_{d}$ depends on $d$ but not on $n$.
For values of $p <   \frac{2d}{d+1}$, the corresponding constant does depend on $n$.
One may obtain a bound for the $1$-norm instead by using the known inequalities between $p$-norms: if $0 < q < p$ one has
\[
\|\mathbf{x}\|_p \le \|\mathbf{x}\|_q \le k^{1/q - 1/p}\|\x\|_p \quad \forall \mathbf{x} \in \mathbb{R}^k.
\]
Setting $q=1$, $p  = \frac{2d}{d+1}$, and $k = {n+d \choose d}$, \eqref{Bohnenblust-Hille} implies
\begin{equation}\label{one_vs_sup_norm}
    \|f\|_1 \le {n+d \choose d}^{1 - \frac{d+1}{2d}}C_{d}\|f\|_{\ell_\infty} = {n+d \choose d}^{ \frac{d-1}{2d}}C_{d}\|f\|_{\ell_\infty}
    < {n+d \choose d}^{ \frac{1}{2}}C_{d}\|f\|_{\ell_\infty},
\end{equation}
where $C_d$ is the same constant as in \eqref{Bohnenblust-Hille}.

Some stronger results are known in special cases, e.g, when $n=2$ and $d=2$, one has
\[
\|f\|_{\ell_\infty} \le \|f\|_1 \le (1+\sqrt{2})\|f\|_{\ell_\infty}.
\]
As another example, if $f(x) = \frac{1}{4}x^TLx$ where $L$ is the Laplacian matrix of a graph, then $\|f\|_{\ell_\infty}$ is the maximum cut size in the graph, while $\|f\|_1$ equals the number of edges in the graph. Thus, in this case one has $\|f\|_{\ell_\infty} \le \|f\|_1 \le 2\|f\|_{\ell_\infty}$, i.e.\ the constant in the upper bound ($2$ in this case) need not depend on $n$.
	
\section{A key lemma}
\label{sec:key lemma}

As before, let $T_k$ denote the univariate Chebyshev polynomials of the first kind.
Then one has the well-known recursive relation $T_0(t) = 1$,  $T_1(t) = t$ and $T_{k+1}(t) = 2t T_k (t)- T_{k-1}(t)$.
Recall that we define the multivariate version by $T_{\alpha} (\mathbf{x})= \prod_{i=1}^n T_{\alpha_i}(x_i)$.
\rev{
Recall that $1 \pm  T_{\alpha}$ is nonnegative on $[-1,1]^n$.
Our first goal is to prove the stronger result that $1 \pm  T_{\alpha}$ belongs to the truncated pre-ordering of order $|\alpha|$, namely
$1 \pm T_\alpha(\mathbf{x}) \in \mathcal{T}(1-x_1,1+x_1,...,1-x_n,1+x_n)_{|\alpha|}$.

In the proof, we will need the a classical result of this type for the special case of univariate polynomials.
\begin{theorem}[Fekete, Markov-Luk\'acz (see, e.g., \cite{PR2})]\label{THM:MarkovLukacz}
Let $p$ be a univariate polynomial of degree
$2m$. Then $p$ is nonnegative on the interval $[a,b]$ if and
only if
\[
 p(t)=q_1^2(t)+(t-a)(t-x)q_2^2(t),
\]
for some polynomials $q_1$ of degree $m$ and $q_2$ of degree $m-1$.
If the degree of $p$ is $2m+1$ then $p$ is
nonnegative on $[a,b]$ if and only if
\[
 p(t)=(t-a)q_1^2(t)+(b-t)q_2^2(t),
\]
for some polynomials $q_1$ and $q_2$ of degree $m$.
\end{theorem}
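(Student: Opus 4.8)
The plan is to dispatch the easy ``if'' direction by inspection and to obtain the ``only if'' direction by transporting the interval $[a,b]$ to the half-line $[0,\infty)$, where the relevant factorisation closes up with a single square.

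For ``if'', observe that on $[a,b]$ one has $t-a\ge 0$ and $b-t\ge 0$, so each of the two displayed expressions is a sum of a square and a product of a nonnegative polynomial with a square, hence nonnegative on $[a,b]$; the degree bounds on $q_1,q_2$ serve only to keep $\deg p\le 2m$ (resp.\ $\le 2m+1$) and are irrelevant here. So assume $p\ge 0$ on $[a,b]$ and $p\not\equiv 0$ (else take $q_1=q_2=0$). Apply the substitution $s=\tfrac{t-a}{b-t}$, a bijection of $[a,b)$ onto $[0,\infty)$ with inverse $t=\tfrac{a+bs}{1+s}$, so that $t-a=\tfrac{(b-a)s}{1+s}$ and $b-t=\tfrac{b-a}{1+s}$. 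With $N:=\deg p$, set
\[
P(s):=(1+s)^{N}\,p\!\left(\tfrac{a+bs}{1+s}\right),
\]
a polynomial of degree at most $N$ that is nonnegative on $[0,\infty)$, since there $1+s>0$ and $\tfrac{a+bs}{1+s}\in[a,b]$. The inverse relation reads $(b-a)^N p(t)=(b-t)^N P\!\bigl(\tfrac{t-a}{b-t}\bigr)$.

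The core step is the half-line analogue: \emph{if $P\not\equiv 0$ is nonnegative on $[0,\infty)$, then $P(s)=f(s)^2+s\,g(s)^2$ for polynomials $f,g$ with $\deg f\le\lfloor\deg P/2\rfloor$ and $\deg(s\,g^2)\le\deg P$.} I would prove this by factoring $P$ over $\mathbb{R}$: its leading coefficient is positive (as $P\ge0$ for $s$ large), hence a square; a real root $\rho\in(0,\infty)$ has even multiplicity (otherwise $P$ changes sign at an interior point of $[0,\infty)$), contributing a square; a real root $\rho\le 0$ contributes a factor $s-\rho=(\sqrt{-\rho})^{2}+s\cdot 1^{2}$; and a conjugate pair of complex roots contributes
\[
(s-\alpha)^2+\beta^2=\bigl(s-\sqrt{\alpha^2+\beta^2}\,\bigr)^2+s\bigl(2\sqrt{\alpha^2+\beta^2}-2\alpha\bigr),
\]
where the coefficient of $s$ is nonnegative since $\sqrt{\alpha^2+\beta^2}\ge|\alpha|$. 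Thus $P$ is a product of polynomials of the form $f^2+s\,g^2$, and this class is closed under multiplication by the ``norm'' identity
\[
(f^2+s\,g^2)(h^2+s\,k^2)=(fh+s\,gk)^2+s\,(fk-gh)^2 ,
\]
which also propagates the two degree bounds; iterating over the factors proves the claim.

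It then remains to substitute back. Put $m':=\lfloor N/2\rfloor$, $m'':=\lfloor(N-1)/2\rfloor$, and $\hat f(t):=(b-t)^{m'}f\!\bigl(\tfrac{t-a}{b-t}\bigr)$, $\hat g(t):=(b-t)^{m''}g\!\bigl(\tfrac{t-a}{b-t}\bigr)$; expanding the binomials shows these are genuine polynomials of degrees at most $m'$ and $m''$. Substituting $P=f^2+s\,g^2$ into $(b-a)^N p(t)=(b-t)^N P\!\bigl(\tfrac{t-a}{b-t}\bigr)$ and clearing powers of $(b-t)$ gives
\[
(b-a)^{N}\,p(t)=(b-t)^{\,N-2m'}\,\hat f(t)^2+(t-a)\,(b-t)^{\,N-1-2m''}\,\hat g(t)^2 .
\]
If $N=2m$, then $N-2m'=0$ and $N-1-2m''=1$, so $p=q_1^2+(t-a)(b-t)q_2^2$ with $q_1=(b-a)^{-m}\hat f$ of degree $\le m$ and $q_2=(b-a)^{-m}\hat g$ of degree $\le m-1$. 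If $N=2m+1$, then $N-2m'=1$ and $N-1-2m''=0$, so $p=(t-a)q_1^2+(b-t)q_2^2$ with $q_1,q_2$ suitable scalar multiples of $\hat g,\hat f$ of degree $\le m$. These are precisely the asserted representations. The main obstacle I anticipate is obtaining \emph{single} squares with exactly the stated degrees: direct manipulations on $[a,b]$ only yield sums of two squares, and it is the detour through $[0,\infty)$—where $(f^2+sg^2)(h^2+sk^2)$ is again of the form $f^2+sg^2$—that makes the degree count sharp; the remaining points needing care are the sign-change argument for even multiplicity of interior roots and the nonnegativity of the coefficient of $s$ in the quadratic factors.
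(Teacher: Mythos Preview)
The paper does not prove this statement: it is quoted as a classical result with a reference to Powers and Reznick \cite{PR2}, and is used only as a black box (in Lemma~1 and its corollaries). So there is no proof in the paper to compare against.

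Your argument is correct and is in fact one of the standard proofs of the Markov--Luk\'acz theorem. The reduction to $[0,\infty)$ via the M\"obius substitution $s=(t-a)/(b-t)$, the factor-by-factor verification that each irreducible real factor of $P$ lies in the set $\{f^2+sg^2\}$, and the closure of that set under multiplication via the Brahmagupta--Fibonacci-type identity $(f^2+sg^2)(h^2+sk^2)=(fh+sgk)^2+s(fk-gh)^2$ together give exactly the single-square representations with the sharp degree bounds. A couple of small points worth tightening in a final write-up: (i) the statement in the paper contains the typo $(t-a)(t-x)$ for $(t-a)(b-t)$, which you have silently corrected; (ii) your half-line lemma bounds $\deg f$ and $\deg g$ in terms of $\deg P$, which may be strictly less than $N=\deg p$ when $p(b)=0$; since you then multiply by $(b-t)^{m'}$ and $(b-t)^{m''}$ with $m',m''$ defined from $N$ rather than $\deg P$, the polynomial conclusion and degree bounds still hold, but it is worth a sentence to say so explicitly; (iii) the theorem as stated asserts $q_1,q_2$ ``of degree'' $m$ (resp.\ $m-1$), which should be read as ``of degree at most''---your argument delivers the ``at most'' version, which is what is actually used downstream in the paper.
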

We now give a multivariate generalization of this result.
}

\begin{lemma}
For any $\alpha \in (\mathbb{N}_0)^n$  we have $1 \pm T_\alpha(\mathbf{x}) \in \mathcal{T}(1\pm x_1,\ldots,1\pm x_n)_{|\alpha|}$.
\end{lemma}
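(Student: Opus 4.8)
The plan is to prove the statement by induction on $|\alpha|$, combining the univariate characterization (Theorem \ref{THM:MarkovLukacz}) with the Chebyshev recursion and the multiplicativity $T_\alpha(\x) = \prod_i T_{\alpha_i}(x_i)$. First I would dispose of the base cases: if $|\alpha| = 0$ then $T_\alpha \equiv 1$ and $1 - T_\alpha = 0$, $1 + T_\alpha = 2$ are trivially in $\mathcal{T}(\cdot)_0 = \Sigma[\x]_0$; and if $|\alpha| = 1$, say $\alpha = e_i$, then $1 \pm T_\alpha = 1 \pm x_i$ is a generator, hence lies in $\mathcal{T}(1\pm x_1,\ldots,1\pm x_n)_1$.

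The core of the argument is a univariate lemma that I would isolate first: for each $k \in \N$, both $1 - T_k(t)$ and $1 + T_k(t)$ lie in $\mathcal{T}(1-t, 1+t)_k$ (the univariate truncated pre-ordering on $[-1,1]$). This follows from Theorem \ref{THM:MarkovLukacz} applied to $p(t) = 1 \mp T_k(t)$ on $[a,b] = [-1,1]$: these are nonnegative polynomials of degree exactly $k$, so the theorem gives $1 - T_k = q_1^2 + (1-t^2)q_2^2$ (with $\deg q_1 \le \lceil k/2\rceil$, and using $1 - t^2 = (1-t)(1+t)$) when $k$ is even, or $1 - T_k = (1+t)q_1^2 + (1-t)q_2^2$ when $k$ is odd, and analogously for $1 + T_k$; in every case the degree bookkeeping gives total degree $\le k$ in each term, so the expression lies in $\mathcal{T}(1-t,1+t)_k$.

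For the inductive step on the multivariate statement, I would pick an index $i$ with $\alpha_i \ge 1$, write $\alpha = \beta + \alpha_i e_i$ where $\beta$ has $\beta_i = 0$, and factor $T_\alpha(\x) = T_{\alpha_i}(x_i)\, T_\beta(\x')$ where $\x'$ collects the remaining variables. The identity I would use is the polarization
\[
1 - T_{\alpha_i}(x_i) T_\beta(\x') = \tfrac12\bigl(1 - T_{\alpha_i}(x_i)\bigr)\bigl(1 + T_\beta(\x')\bigr) + \tfrac12\bigl(1 + T_{\alpha_i}(x_i)\bigr)\bigl(1 - T_\beta(\x')\bigr),
\]
and the analogous one for $1 + T_\alpha$ with the signs on the $T_\beta$ factors swapped. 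By the univariate lemma, $1 \pm T_{\alpha_i}(x_i) \in \mathcal{T}(1-x_i, 1+x_i)_{\alpha_i}$; by the induction hypothesis (valid since $|\beta| = |\alpha| - \alpha_i < |\alpha|$), $1 \pm T_\beta(\x') \in \mathcal{T}(1\pm x_j : j \ne i)_{|\beta|}$. Since a pre-ordering is closed under products and under multiplication by sums of squares, each of the two products on the right-hand side lies in $\mathcal{T}(1\pm x_1,\ldots,1\pm x_n)$ with total degree at most $\alpha_i + |\beta| = |\alpha|$, and multiplying by the scalar $\tfrac12$ (a square) preserves this. Summing the two terms keeps us in the truncated pre-ordering of order $|\alpha|$, which completes the induction.

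The main obstacle I anticipate is the degree bookkeeping in the univariate lemma and in the product step: one must check carefully that when Theorem \ref{THM:MarkovLukacz} is applied to a degree-$k$ polynomial, expanding $1 - t^2 = (1-t)(1+t)$ and regrouping $q_2^2(1-t^2)$ as $\bigl((1-t)q_2^2\bigr)(1+t)$ (or symmetrically) keeps every term's degree at most $k$, matching exactly the truncation index $|\alpha|$ required — there is no slack, so an off-by-one here would be fatal. A secondary point worth stating explicitly is that $\mathcal{T}(1\pm x_j : j\ne i)_{|\beta|}$ embeds into $\mathcal{T}(1\pm x_1,\ldots,1\pm x_n)_{|\beta|}$ (just take the $\sigma_I$ for $I$ involving index $i$ to be zero), so that the product of a univariate-in-$x_i$ element and a "$\x'$-only" element genuinely lies in the full $n$-variable pre-ordering.
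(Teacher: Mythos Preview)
Your proposal is correct and follows essentially the same route as the paper: both arguments reduce to the univariate fact $1 \pm T_k(t) \in \mathcal{T}(1-t,1+t)_k$ via the Fekete--Markov--Luk\'acz theorem, and both use the polarization identity $1 - ab = \tfrac12(1-a)(1+b) + \tfrac12(1+a)(1-b)$ (and its sign variant) to peel off one factor of the product $T_\alpha(\x) = \prod_i T_{\alpha_i}(x_i)$ at a time. The paper phrases the multivariate reduction as ``applying the identities repeatedly,'' while you make the induction explicit and track the degree bookkeeping more carefully, but the content is the same.
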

\proof
 By applying the identities,
\[
1+ab = [(1-a)(1+b) + (1+a)(1-b)]/2 \mbox{ and } 1-ab = [(1+a)(1+b) + (1-a)(1-b)]/2
\]
repeatedly as necessary, we obtain that  $1 \pm T_{\alpha}(\mathbf{x}) = 1 \pm \prod_{i=1}^n T_{\alpha_i}(x_i)$
 can be expressed as sums of products of terms of the form $1 \pm T_{\alpha_i}(x_i)$.

Thus, it is enough to show that   $1 \pm T_k(t) \in \mathcal{T}(1-t,1+t)_k$.
But this follows from the fact that every $p \in \R[t]_k$  non-negative  on $[-1,1]$ is in  $\mathcal{T}(1-t,1+t)_k$, \rev{by Theorem \ref{THM:MarkovLukacz},} and the fact that
and $1 \pm T_k$ is nonnegative on $[-1,1]$.   \qed

\rev{The main result of this section now follows as an immediate consequence of the last lemma.}
\begin{theorem}\label{thm:norm1}
For all $p \in \R[\mathbf{x}]$, one has $\|p\|_{1,T} - p \in \mathcal{T}(1 \pm x_1,...,1\pm x_n)_{\deg p}$.
\end{theorem}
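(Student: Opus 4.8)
The plan is to derive Theorem \ref{thm:norm1} directly from the preceding Lemma by expanding $p$ in the Chebyshev basis and using the fact that a truncated pre-ordering is a convex cone closed under addition. First I would write $p = \sum_{\alpha \in \N^n_{\deg p}} c_\alpha T_\alpha$, where $c_\alpha \in \R$ are the Chebyshev coefficients of $p$, so that by definition $\|p\|_{1,T} = \sum_\alpha |c_\alpha|$. Then I would write
\[
\|p\|_{1,T} - p = \sum_{\alpha} |c_\alpha| - \sum_\alpha c_\alpha T_\alpha = \sum_{\alpha} |c_\alpha|\left( 1 - \operatorname{sign}(c_\alpha) T_\alpha\right),
\]
treating the (finitely many) $\alpha$ with $c_\alpha = 0$ as contributing nothing, and noting $1 = \sum_\alpha |c_\alpha| \cdot \tfrac{1}{\|p\|_{1,T}}$ only if needed — actually the cleaner route is just to observe that the constant $\|p\|_{1,T}$ equals $\sum_\alpha |c_\alpha|\cdot 1$, so the two sums combine termwise into $\sum_\alpha |c_\alpha|(1 \mp T_\alpha)$ with the sign chosen as $-\operatorname{sign}(c_\alpha)$.

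Next I would invoke the Lemma: for each $\alpha$ appearing (hence with $|\alpha| \le \deg p$), both $1 + T_\alpha$ and $1 - T_\alpha$ lie in $\mathcal{T}(1\pm x_1,\ldots,1\pm x_n)_{|\alpha|} \subseteq \mathcal{T}(1\pm x_1,\ldots,1\pm x_n)_{\deg p}$, the inclusion holding because enlarging the degree bound only enlarges the truncated pre-ordering (one may pad with a zero sum-of-squares, or simply note every admissible representation for the smaller bound is admissible for the larger). Then I would note that $\mathcal{T}(1\pm x_1,\ldots,1\pm x_n)_{\deg p}$ is closed under multiplication by nonnegative scalars (scaling each $\sigma_I$ by $|c_\alpha|$ keeps it a sum of squares and does not change degrees) and under finite sums (concatenating the representations, grouping like products $\prod_{i\in I}(1\pm x_i)$ and adding the corresponding sums-of-squares, which stays a sum of squares of degree at most $\deg p - 2|I|$). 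Hence the finite nonnegative combination $\sum_\alpha |c_\alpha|(1 \mp T_\alpha)$ lies in $\mathcal{T}(1\pm x_1,\ldots,1\pm x_n)_{\deg p}$, which is exactly the claim.

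There is really no serious obstacle here — the theorem is an immediate corollary of the Lemma plus the cone structure of the truncated pre-ordering. The only points requiring a word of care are: (i) checking that the degree bound is preserved under the convex-combination step, i.e.\ that adding the representations of $1 \mp T_\alpha$ for various $\alpha$ with $|\alpha|\le \deg p$ yields a representation respecting the degree-$\deg p$ truncation (it does, since each summand already respects it and the index sets $I$ range over the same family $2^{[n]}$); and (ii) confirming the sign bookkeeping, that $\|p\|_{1,T} - p$ is genuinely the combination $\sum_\alpha |c_\alpha|(1 - \operatorname{sign}(c_\alpha)T_\alpha)$, which is routine since $\sum_\alpha |c_\alpha| = \|p\|_{1,T}$ by definition of the Chebyshev $1$-norm. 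I would state the proof in two or three lines, citing the Lemma for membership of $1\pm T_\alpha$ and the definition \eqref{eq:trubcated preordering def} (adapted to the $1\pm x_i$ generators) for closure under nonnegative sums.
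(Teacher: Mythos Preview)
Your proposal is correct and follows exactly the same approach as the paper's proof: write $p=\sum_\alpha p_\alpha T_\alpha$, rewrite $\|p\|_{1,T}-p=\sum_\alpha |p_\alpha|(1-\operatorname{sign}(p_\alpha)T_\alpha)$, and apply the preceding Lemma termwise. The paper dispatches this in two lines, whereas you spell out the cone-closure and degree-bound details more explicitly, but the argument is identical.
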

\proof
Letting $p(\mathbf{x}) = \sum_{\alpha \in \N^n_d} p_\alpha T_\alpha(\mathbf{x})$, we
  get $\|p\|_{1,T} - p = \sum_{\alpha \in \N^n_d} |p_\alpha| (1-\mbox{sign}(p_\alpha) T_\alpha(\mathbf{x}))$ and thus the statement follows by the last lemma.\qed

\begin{corollary}
\label{cor:Juan}
If $f$, $p \in \R[\mathbf{x}]_d$, and $p \in \mathcal{T}(1\pm x_1,\ldots,1\pm x_n)_d$,  then
\[
f + \|p-f\|_{1,T} \in \mathcal{T}(1\pm x_1,\ldots,1\pm x_n)_d.
\]
\rev{
In particular, replacing $f$ by $f - f_{\min}$, one has
\begin{equation}\label{fr bound ito one norm}
  \hat f_{(r)} \ge f_{\min} -  \min_{p \in \mathcal{T}(1\pm x_1,\ldots,1\pm x_n)_r} \|p - (f - f_{\min})\|_{1,T},
\end{equation}
for $r \ge \deg f$, where $\hat f_{(r)}$ is the SDP bound defined in \eqref{Schmuedgen_SDP2}.
}
\end{corollary}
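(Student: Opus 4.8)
The plan is to derive Corollary \ref{cor:Juan} directly from Theorem \ref{thm:norm1} together with the elementary fact that a truncated pre-ordering is closed under addition (when the degree bounds are respected) and contains all nonnegative constants. First I would argue the main inclusion. Write $q := p - f$, so that $\deg q \le d$. By Theorem \ref{thm:norm1} applied to $q$, one has $\|q\|_{1,T} - q \in \mathcal{T}(1\pm x_1,\ldots,1\pm x_n)_{\deg q} \subseteq \mathcal{T}(1\pm x_1,\ldots,1\pm x_n)_d$, where the last inclusion is monotonicity of the truncated pre-ordering in the degree parameter. Since $p \in \mathcal{T}(1\pm x_1,\ldots,1\pm x_n)_d$ by hypothesis, and the pre-ordering is a convex cone (sums of two elements with all terms of degree $\le d$ stay of degree $\le d$), I can add the two:
\[
  f + \|p-f\|_{1,T} = p + \bigl(\|q\|_{1,T} - q\bigr) \in \mathcal{T}(1\pm x_1,\ldots,1\pm x_n)_d.
\]
This is the first displayed claim.

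Next I would establish the bound \eqref{fr bound ito one norm}. Fix $r \ge \deg f$ and let $p^\star$ be any minimizer (or near-minimizer, to avoid assuming attainment) of $\|p - (f - f_{\min})\|_{1,T}$ over $p \in \mathcal{T}(1\pm x_1,\ldots,1\pm x_n)_r$; set $\delta := \|p^\star - (f - f_{\min})\|_{1,T}$. Apply the first part of the corollary with $f$ replaced by $f - f_{\min}$ and with degree parameter $r$ in place of $d$ (this is legitimate because $\deg(f - f_{\min}) \le r$ and $p^\star$ lies in the pre-ordering of order $r$; the proof of the inclusion goes through verbatim with $d$ replaced by $r$). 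This yields
\[
  (f - f_{\min}) + \delta \in \mathcal{T}(1\pm x_1,\ldots,1\pm x_n)_r,
\]
i.e.\ $f - (f_{\min} - \delta) \in \mathcal{T}(1\pm x_1,\ldots,1\pm x_n)_r$. By the definition \eqref{Schmuedgen_SDP2} of $\hat f_{(r)}$ as a supremum over admissible $\lambda$, this shows $\hat f_{(r)} \ge f_{\min} - \delta$, which is exactly \eqref{fr bound ito one norm}; if one used a near-minimizer, let the tolerance go to zero.

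One mild subtlety worth flagging is that Theorem \ref{thm:norm1} gives membership in the pre-ordering of order $\deg q$, so I should make sure $\deg q \le d$ (resp.\ $\le r$) — this holds because $q = p - f$ is a difference of two polynomials of degree $\le d$ (resp.\ $p^\star$ of degree $\le r$ and $f - f_{\min}$ of degree $\le r$), and then invoke the monotonicity $\mathcal{T}(\cdot)_{s} \subseteq \mathcal{T}(\cdot)_{r}$ for $s \le r$, which is immediate from the definition since any representation with all term-degrees $\le s$ is in particular one with all term-degrees $\le r$. I do not expect any genuine obstacle here: the content is entirely in Theorem \ref{thm:norm1}, and Corollary \ref{cor:Juan} is a bookkeeping rearrangement of it into the language of the SDP bound $\hat f_{(r)}$.
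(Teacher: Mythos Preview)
Your argument is correct and is exactly the intended one: the paper presents Corollary \ref{cor:Juan} as an immediate consequence of Theorem \ref{thm:norm1}, and your decomposition $f + \|p-f\|_{1,T} = p + (\|p-f\|_{1,T} - (p-f))$ together with closure of the truncated pre-ordering under addition is precisely how that inference is meant to go. The derivation of \eqref{fr bound ito one norm} via the definition of $\hat f_{(r)}$ is likewise the intended step.
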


\textbf{{Remark:} }these results remain true if the Chebyshev basis is replaced by the standard monomial basis, with our prior convention that, if
$f(\mathbf{x}) = \sum_{\alpha \in \N^n_d} f_\alpha \mathbf{x}^\alpha$, then $f$ has $1$-norm $\|f\|_1 = \sum_{\alpha \in \N^n_d} |f_\alpha|$.

\section{A quantitatative Schm\"{u}dgen-type theorem for the hypercube via sums-of-squares approximations}
\label{sec:1 norm projections}
We can use Corollary \ref{cor:Juan} to derive a quantitative version of Schm\"{u}dgen-type theorem for the $[-1,1]^n$ hypercube.
We will need the following result from Lasserre and Netzer  \cite{Lasserre_Netzer_2006}.

\begin{theorem}[Lasserre and Netzer  \cite{Lasserre_Netzer_2006}]
\label{lemma:Netzer}
If $f \in \mathbb{R}[\mathbf{x}]$ is nonnegative on $[-1,1]^n$, then, for any $\epsilon > 0$, there exists an $r_\epsilon \in \mathbb{N}$ such that
\begin{equation}
\label{eq:sos-perturbation}
f_\epsilon(\mathbf{x}) := f(\mathbf{x}) + \epsilon\left(1 + \sum_{i=1}^n x_i^{2r_\epsilon}\right) \in \Sigma[\x]_{2r_\epsilon}.
\end{equation}
\end{theorem}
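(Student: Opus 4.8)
The plan is to reduce the multivariate statement to a univariate one, exactly as in the earlier lemma on $1 \pm T_\alpha$. First I would recall that a univariate polynomial that is nonnegative on $[-1,1]$ can be approximated arbitrarily well (in any fixed norm on a finite-dimensional space, hence in particular uniformly on $[-1,1]$) by polynomials of the form $f + \epsilon'(1 + x^{2k})$, which is the $n=1$ case of the statement; but in fact the classical Markov--Luk\'acz/Fekete representation (Theorem \ref{THM:MarkovLukacz}) already writes a univariate nonnegative polynomial as a weighted sum of squares without any perturbation, so the univariate case is essentially free. The content of the theorem is therefore the passage to the hypercube $[-1,1]^n$ while keeping the perturbation of the special form $\epsilon(1+\sum_i x_i^{2r_\epsilon})$ and the resulting polynomial an honest sum of squares (no denominators, no extra generators).

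The key steps, in order, would be: (1) Fix $\epsilon>0$. Since $f \ge 0$ on the compact set $[-1,1]^n$ and $\R[\x]_d$ is finite-dimensional, use a density/Stone--Weierstrass-type argument together with the fact that strictly positive polynomials on $[-1,1]^n$ lie in the interior of the SOS cone after adding a small bump, to show that for some large even exponent $2k$ the polynomial $f + \tfrac{\epsilon}{2}(1 + \sum_i x_i^{2k})$ is strictly positive on $[-1,1]^n$ and, more importantly, dominates enough ``mass'' in the high-degree directions. (2) The crucial algebraic input: the polynomial $1 + \sum_{i=1}^n x_i^{2k}$ grows like $|x_i|^{2k}$ in each coordinate direction, so adding a multiple of it to any fixed-degree polynomial $g$ makes the sum a polynomial whose behaviour is controlled; one then invokes a result (this is the actual Lasserre--Netzer argument) that a polynomial which is strictly positive on $\R^n$ ``at infinity in the $\ell_{2k}$ sense'' and positive on a ball is SOS. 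Concretely one writes $f_\epsilon = f + \tfrac{\epsilon}{2}(1+\sum_i x_i^{2k}) + \tfrac{\epsilon}{2}(1+\sum_i x_i^{2k})$ and shows the first grouping is in the interior of a suitable truncated SOS cone while the perturbation term absorbs the boundary. (3) Conclude membership in $\Sigma[\x]_{2r_\epsilon}$ with $r_\epsilon = \max(k, \lceil d/2\rceil)$.

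I would expect the main obstacle to be step (2): controlling the global (not just on-the-cube) behaviour so that the perturbed polynomial is genuinely a sum of squares over all of $\R^n$ rather than merely nonnegative on the cube or nonnegative with denominators. The naive hope — ``$f\ge0$ on $[-1,1]^n$, so $f + \epsilon(\text{bump})$ is SOS'' — is false in general without the bump having exactly the right high-degree profile, and the subtlety is precisely that $1+\sum_i x_i^{2r_\epsilon}$, unlike $(\sum_i x_i^2)^{r_\epsilon}$, is a sum of univariate even powers, so one needs that a polynomial dominated by such a term near infinity and positive on a large ball is SOS. This is where the cited argument of Lasserre and Netzer does the real work, and in a self-contained treatment one would either quote their Theorem directly or reproduce their compactification/limiting argument.

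Since this statement is quoted verbatim as a known theorem of Lasserre and Netzer \cite{Lasserre_Netzer_2006}, in the paper itself the ``proof'' is simply the citation; the sketch above indicates how one would prove it from scratch if needed.
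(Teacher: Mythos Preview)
You are right that the paper does not prove this statement at all: it is quoted as a known result of Lasserre and Netzer and used as a black box, so in that sense your final sentence already matches the paper exactly.

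However, your sketch of how one would prove it from scratch contains a genuine misconception. You write that ``the univariate case is essentially free'' via the Markov--Luk\'acz/Fekete representation (Theorem~\ref{THM:MarkovLukacz}). This is not correct: Theorem~\ref{THM:MarkovLukacz} puts a univariate polynomial nonnegative on $[-1,1]$ into the \emph{preordering} $\mathcal{T}(1-t,1+t)$, not into $\Sigma[t]$. A polynomial such as $1-t^2$ is nonnegative on $[-1,1]$ but is certainly not a sum of squares in $\R[t]$, so the univariate instance of Theorem~\ref{lemma:Netzer} is already nontrivial --- one genuinely needs the high-degree perturbation $\epsilon(1+t^{2r_\epsilon})$ to force global nonnegativity and then SOS membership. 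Consequently the proposed ``reduce to univariate via the $1\pm T_\alpha$ trick'' strategy does not get off the ground: that trick lands you in a preordering, whereas the Lasserre--Netzer theorem requires landing in the bare SOS cone $\Sigma[\x]$.

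Your step~(2) gestures at the correct heuristic (the term $\sum_i x_i^{2r}$ dominates any fixed-degree polynomial outside a large box, so for $r$ large the perturbed polynomial is globally nonnegative and eventually SOS), but the actual Lasserre--Netzer argument does not proceed by such a direct domination estimate; it goes through duality with moment sequences and a limiting argument on truncated moment matrices. If you wanted a self-contained proof, that is the route to reproduce, not a reduction to the univariate Markov--Luk\'acz theorem.
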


We now have the following Schm\"{u}dgen-type theorem.
\begin{theorem}
If $f \in \mathbb{R}[\mathbf{x}]$ is nonnegative on $[-1,1]^n$, then, for any $\epsilon > 0$, there exists an $r \in \mathbb{N}$ such that
\[
f + (n+1)\epsilon \in \mathcal{T}(1-x_1,1+x_1,...,1-x_n,1+x_n)_{2r}.
\]
\revv{The value $r$} is upper bounded by the smallest integer $r_\epsilon$ such that \eqref{eq:sos-perturbation} holds.
\end{theorem}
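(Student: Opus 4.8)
The plan is to combine the Lasserre--Netzer perturbation result (Theorem \ref{lemma:Netzer}) with the $1$-norm machinery from Corollary \ref{cor:Juan}. Fix $\epsilon > 0$ and let $r_\epsilon$ be the integer guaranteed by Theorem \ref{lemma:Netzer}, so that
\[
f_\epsilon(\mathbf{x}) := f(\mathbf{x}) + \epsilon\left(1 + \sum_{i=1}^n x_i^{2r_\epsilon}\right) \in \Sigma[\mathbf{x}]_{2r_\epsilon} \subseteq \mathcal{T}(1\pm x_1,\ldots,1\pm x_n)_{2r_\epsilon},
\]
where the last inclusion is immediate since any sum of squares of degree at most $2r_\epsilon$ sits in the truncated pre-ordering (take $\sigma_\emptyset = f_\epsilon$ and all other $\sigma_I = 0$). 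So I would set $p := f_\epsilon$ and $d := 2r_\epsilon$ (upper-bounding the degree of $f$ if necessary, which only weakens the statement), and apply Corollary \ref{cor:Juan} to conclude that $f + \|p - f\|_{1,T} \in \mathcal{T}(1\pm x_1,\ldots,1\pm x_n)_{2r_\epsilon}$.

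The remaining task is to bound $\|p - f\|_{1,T} = \left\|\epsilon\left(1 + \sum_{i=1}^n x_i^{2r_\epsilon}\right)\right\|_{1,T}$ by $(n+1)\epsilon$. Here I would invoke the \textbf{Remark} following Corollary \ref{cor:Juan}, which says the corollary holds verbatim with the monomial $1$-norm $\|\cdot\|_1$ in place of $\|\cdot\|_{1,T}$; alternatively, use inequality \eqref{one_norm_conversion}, $\|g\|_{1,T} \le \|g\|_1$. Either way, the polynomial $1 + \sum_{i=1}^n x_i^{2r_\epsilon}$ has $n+1$ monomials each with coefficient $1$, so its ($T$- or monomial) $1$-norm is exactly $n+1$, hence $\|p - f\|_{1,T} \le \epsilon(n+1)$. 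Adding a nonnegative constant to a member of the truncated pre-ordering keeps it in the pre-ordering (absorb it into $\sigma_\emptyset$), so $f + (n+1)\epsilon \in \mathcal{T}(1\pm x_1,\ldots,1\pm x_n)_{2r_\epsilon}$, which is the claim with $r = r_\epsilon$.

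I do not anticipate a serious obstacle; the proof is essentially a bookkeeping exercise. The one point requiring a little care is making sure the degree parameter in Corollary \ref{cor:Juan} is chosen consistently: the corollary requires $f, p \in \mathbb{R}[\mathbf{x}]_d$ and $p \in \mathcal{T}(\cdots)_d$ with the \emph{same} $d$, so I must take $d = 2r_\epsilon$ and note that $\deg f \le 2r_\epsilon$ can be assumed (if $\deg f > 2r_\epsilon$ then $f_\epsilon$ would not have degree $2r_\epsilon$, contradicting membership in $\Sigma[\mathbf{x}]_{2r_\epsilon}$, so this is automatic once $f_\epsilon \in \Sigma[\mathbf{x}]_{2r_\epsilon}$). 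The other mild subtlety is that Corollary \ref{cor:Juan} as stated uses the Chebyshev $1$-norm, so strictly one should either cite the Remark or pass through \eqref{one_norm_conversion}; I would do the latter for cleanliness, since $1 + \sum_i x_i^{2r_\epsilon}$ has an obvious monomial $1$-norm of $n+1$. Finally, the claim that $r$ is bounded by the smallest $r_\epsilon$ satisfying \eqref{eq:sos-perturbation} is exactly the value of $r_\epsilon$ produced by Theorem \ref{lemma:Netzer}, so no extra argument is needed.
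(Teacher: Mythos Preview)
Your proposal is correct and follows essentially the same approach as the paper: apply Theorem \ref{lemma:Netzer} to obtain $f_\epsilon \in \Sigma[\mathbf{x}]_{2r_\epsilon}$, observe that $\|f - f_\epsilon\|_1 = (n+1)\epsilon$ in the monomial basis, and invoke Corollary \ref{cor:Juan} (via the Remark following it, or equivalently \eqref{one_norm_conversion}). The paper's proof is just two lines stating exactly this; your additional remarks on degree consistency and the passage from $\|\cdot\|_{1,T}$ to $\|\cdot\|_1$ are correct elaborations of details the paper leaves implicit.
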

\textit{Proof.}
Note that $\|f - f_\epsilon\|_1 = (n+1)\epsilon$ in the standard monomial basis.
The result now follows from Corollary \ref{cor:Juan}. \qed

Lasserre
\cite{Lass_arxiv}
characterized the best sums-of-squares approximation in $1$-norm.

\begin{theorem}[Lasserre \cite{Lass_arxiv}]
Let $f \in \mathbb{R}[\mathbf{x}]$, and $d \in \mathbb{N}$ such that $\deg f \le 2d$. Then there exist nonnegative
$\lambda^*_0, \ldots, \lambda_n^*$, such that
\[
\mathbf{x} \mapsto f(\mathbf{x}) + \lambda_0^* + \sum_{i=1}^n \lambda^*_i x_i^{2d} \in \arg\min_{p \in \Sigma[\mathbf{x}]_{2d}} \|f - p\|_1.
\]
Moreover, the values $\lambda^*_0, \ldots, \lambda_n^*$ are given by the optimal solution of the SDP
\[
\rho_d := \min_{\lambda \ge 0} \left\{ \sum_{i=0}^n \lambda_i \; : \; \x \mapsto f(\mathbf{x}) +
 \lambda_0 + \sum_{i=1}^n \lambda_i x_i^{2d} \in \Sigma[\mathbf{x}]_{2d}\right\},
\]
i.e.
\begin{equation}
\label{eq:rho_d}
\rho_d = \sum_{i=0}^n \lambda^*_i = \min_{p \in \Sigma[\mathbf{x}]_{2d}} \|f - p\|_1.
\end{equation}
\end{theorem}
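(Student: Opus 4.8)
The plan is to recognize this as a convex-duality (conic LP) statement: we are minimizing $\|f-p\|_1$ over $p$ in the convex cone $\Sigma[\mathbf{x}]_{2d}$, and we want to show that the optimum is attained at a point of the special form $f(\mathbf{x})+\lambda_0+\sum_i\lambda_i x_i^{2d}$ with $\lambda\ge 0$, and that the optimal $\lambda$ solves the displayed SDP $\rho_d$. First I would set up the problem $\min\{\|f-p\|_1 : p\in\Sigma[\mathbf{x}]_{2d}\}$ and observe that, since $f$ and $p$ both have degree at most $2d$, the difference $f-p$ lives in the finite-dimensional space $\mathbb{R}[\mathbf{x}]_{2d}$, so the $1$-norm is over the (finitely many) monomial coefficients. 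Thus the problem is a finite-dimensional convex program, feasible (take $p$ a large multiple of a suitable SOS) and bounded below by $0$, hence the minimum is attained.

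Next I would pass to the dual via the $1$-norm's variational description $\|q\|_1=\max_{\|c\|_\infty\le 1}\langle c,q\rangle$, where the pairing is coefficient-wise in the monomial basis. Writing $q=f-p$, the problem becomes $\min_{p\in\Sigma_{2d}}\max_{\|c\|_\infty\le1}\langle c, f-p\rangle$; by Sion's minimax theorem (the inner set is compact, the outer cone convex, the objective bilinear) we may swap to get $\max_{\|c\|_\infty\le1}\big(\langle c,f\rangle-\max_{p\in\Sigma_{2d}}\langle c,p\rangle\big)$. The inner maximum over the cone $\Sigma_{2d}$ is $0$ if $c$, viewed as a linear functional, is nonnegative on $\Sigma_{2d}$ (i.e.\ $c$ lies in the dual cone of SOS, a moment/PSD condition), and $+\infty$ otherwise. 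Hence $\rho_d=\max\{\langle c,f\rangle : \|c\|_\infty\le1,\ c\in(\Sigma_{2d})^*\}$, an SDP. The key structural observation is that the $\ell_\infty$-ball constraints that are active at optimality correspond precisely to the monomials $1$ and $x_i^{2d}$: I would argue that complementary slackness forces the optimal $p^*$ to differ from $f$ only in the coefficients of $1$ and the $x_i^{2d}$, and that this difference is nonnegative. Concretely, the KKT/complementarity conditions for this pair of SDPs say $c^*_\alpha=\mathrm{sign}((f-p^*)_\alpha)$ whenever $(f-p^*)_\alpha\ne0$; combined with the moment-cone characterization of $(\Sigma_{2d})^*$ and the fact that the "corner" moments $1$ and $x_i^{2d}$ are exactly the ones not constrained from being large, one extracts that $p^*=f+\lambda^*_0+\sum_i\lambda^*_i x_i^{2d}$ with $\lambda^*\ge0$, and $\sum_i\lambda^*_i=\rho_d$.

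An alternative, perhaps cleaner, route is to reduce directly to Theorem~\ref{lemma:Netzer}'s structure: show first that there is an optimal $p^*$ with $p^*-f$ supported only on $\{1,x_1^{2d},\dots,x_n^{2d}\}$, by a symmetrization / support-reduction argument — given any optimal $p$, one perturbs it within $\Sigma_{2d}$ to kill unwanted coefficients of $f-p$ without increasing the $1$-norm, using that adding a small nonnegative multiple of a square can absorb a monomial's coefficient. Once $p^*-f=\lambda_0+\sum_i\lambda_i x_i^{2d}$ is forced, the $1$-norm $\|f-p^*\|_1$ equals $\sum_i|\lambda_i|$ and one checks the optimal $\lambda_i$ must be nonnegative (a negative $\lambda_i$ only hurts positivity and does not decrease the norm compared to $\lambda_i=0$), so $\|f-p^*\|_1=\sum_i\lambda_i$, which is exactly the objective of the SDP defining $\rho_d$; minimality of $\rho_d$ over all such $\lambda$ together with optimality of $p^*$ gives equality.

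The main obstacle is the support-reduction step — proving that some minimizer has $f-p^*$ supported only on the "corner" monomials $1$ and $x_i^{2d}$. This is where the special degree-$2d$ structure and the geometry of $\Sigma[\mathbf{x}]_{2d}$ enter: the intuition is that any other monomial $\mathbf{x}^\alpha$ with $0<|\alpha|<2d$ or with $\alpha$ not a pure even power can be "neutralized" by moving along an extreme ray of the SOS cone without cost in $1$-norm, whereas the monomials $1$ and $x_i^{2d}$ are the ones that genuinely have to appear to lift $f$ into $\Sigma_{2d}$. Making this rigorous requires either careful use of the dual SDP's complementary slackness (identifying which primal coefficients are forced to vanish) or an explicit extremal/exchange argument; I expect the bulk of the work to lie there, with everything else (existence of minimizers, minimax duality, SDP reformulation) being routine.
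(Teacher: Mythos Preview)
The paper does not contain a proof of this theorem: it is quoted as a result of Lasserre \cite{Lass_arxiv} and used as a black box (to derive Corollary~\ref{cor:rho_d}). So there is no ``paper's own proof'' to compare against; you would need to consult Lasserre's preprint for the original argument.

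That said, your duality-based plan is the right framework and is essentially Lasserre's approach. A few remarks on the sketch. The easy direction $\min_{p\in\Sigma_{2d}}\|f-p\|_1\le\rho_d$ is immediate: any feasible $\lambda\ge 0$ yields $p=f+\lambda_0+\sum_i\lambda_i x_i^{2d}\in\Sigma_{2d}$ with $\|f-p\|_1=\sum_i\lambda_i$. The content is the reverse inequality. Your minimax computation is correct and gives the dual $\max\{\langle c,f\rangle:\|c\|_\infty\le 1,\ M_d(-c)\succeq 0\}$ (note the sign: you need $\langle c,p\rangle\le 0$ for all $p\in\Sigma_{2d}$, which is $M_d(-c)\succeq 0$, not $M_d(c)\succeq 0$). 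The step you flag as the obstacle --- showing that an optimal $p^*$ satisfies $(f-p^*)_\alpha=0$ for $\alpha\notin\{0,2de_1,\dots,2de_n\}$ --- is indeed the crux, but your heuristic that ``the corner moments are the ones not constrained from being large'' is not the mechanism. The actual argument compares the \emph{two} duals: the dual of $\rho_d$ has moment constraints but only the box constraints $y_0\le 1$, $y_{2de_i}\le 1$ (which turn out to be implied by $M_d(y)\succeq 0$ and hence redundant in one direction), whereas the dual of the full $1$-norm problem has \emph{all} the box constraints $|y_\alpha|\le 1$. Showing these two duals have the same value is what forces the support reduction, and it uses that the diagonal of $M_d(y)$ controls all off-diagonal entries via the PSD constraint together with the specific positions of $y_0$ and $y_{2de_i}$ on that diagonal. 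Your ``symmetrization/perturbation'' alternative is vaguer and I would not expect it to close without reverting to the duality picture.
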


\rev{
By Corollary \ref{cor:Juan}, we have the following immediate implication of the last theorem.}
\begin{corollary}
\label{cor:rho_d}
    If $f$ has $\deg f < 2d$,   then
\[
f + \rho_d \in \mathcal{T}(1-x_1,1+x_1,\ldots,1-x_n,1+x_n)_{2d},
\]
where $\rho_d$ is defined in \eqref{eq:rho_d}.
\end{corollary}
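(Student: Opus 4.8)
The plan is to derive Corollary \ref{cor:rho_d} as a direct specialization of Corollary \ref{cor:Juan}, using the Lasserre theorem on best sums-of-squares $1$-norm approximation to supply the approximating polynomial $p$. First I would set $p(\mathbf{x}) := f(\mathbf{x}) + \lambda_0^* + \sum_{i=1}^n \lambda_i^* x_i^{2d}$, where $\lambda_0^*, \ldots, \lambda_n^*$ are the optimal values from the SDP defining $\rho_d$. By that theorem, $p \in \Sigma[\mathbf{x}]_{2d}$, hence in particular $p \in \mathcal{T}(1\pm x_1, \ldots, 1\pm x_n)_{2d}$, since every sum of squares of degree at most $2d$ lies in this truncated pre-ordering (it is the term indexed by all four empty index sets). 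Also $\deg p \le 2d$: we have $\deg f < 2d$ by hypothesis and each $x_i^{2d}$ has degree exactly $2d$, so $p \in \mathbb{R}[\mathbf{x}]_{2d}$ as required to invoke Corollary \ref{cor:Juan} with $r = 2d$.

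Next I would compute the $1$-norm distance $\|p - f\|_{1,T}$, or rather $\|p-f\|_1$ in the monomial basis, invoking the Remark following Corollary \ref{cor:Juan} which states that the corollary holds verbatim with the Chebyshev basis replaced by the standard monomial basis. Since $p - f = \lambda_0^* + \sum_{i=1}^n \lambda_i^* x_i^{2d}$ is a nonnegative combination of the monomials $1, x_1^{2d}, \ldots, x_n^{2d}$ (these being distinct monomials, so no cancellation occurs), one has $\|p - f\|_1 = \lambda_0^* + \sum_{i=1}^n \lambda_i^* = \rho_d$ by \eqref{eq:rho_d}. Applying Corollary \ref{cor:Juan} (monomial-basis version) with this $p$ then gives $f + \|p - f\|_1 = f + \rho_d \in \mathcal{T}(1-x_1, 1+x_1, \ldots, 1-x_n, 1+x_n)_{2d}$, which is exactly the claim.

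There is essentially no obstacle here; the only points requiring a moment's care are (i) confirming that $\Sigma[\mathbf{x}]_{2d} \subseteq \mathcal{T}(1\pm x_1,\ldots,1\pm x_n)_{2d}$, which is immediate from the definition of the truncated pre-ordering by taking the sum-of-squares multiplier on the empty product to be $p$ itself and all others zero; (ii) ensuring the degree bound $\deg p \le 2d$ needed for Corollary \ref{cor:Juan}, which uses the strict inequality $\deg f < 2d$ in the hypothesis; and (iii) noting that the monomials $1, x_1^{2d}, \ldots, x_n^{2d}$ appearing in $p - f$ are pairwise distinct and disjoint from any monomial of $f$ only in the relevant coordinates — actually one does not even need disjointness from $f$, since $p - f$ is explicitly this nonnegative combination, so its $1$-norm is just the sum of the coefficients $\sum_{i=0}^n \lambda_i^* = \rho_d$. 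Thus the proof is a two-line deduction combining the cited Lasserre theorem with Corollary \ref{cor:Juan} and its Remark.
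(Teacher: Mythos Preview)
Your proposal is correct and takes exactly the approach the paper intends: the paper states this corollary as an immediate consequence of Corollary~\ref{cor:Juan} and Lasserre's theorem, and you have simply written out the details of that immediate implication (using the monomial-basis Remark, as is natural here). There is nothing to add.
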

\rev{
The corollary implies that, if $\deg f < 2d$ and $r \ge d$, one has $\hat f_{(2r)} \ge  - \rho_r$.}

\begin{example}[Lasserre \cite{Lass_arxiv}]
Consider the Motzkin-type polynomial  $\x\mapsto f(\mathbf{x})=x_1^2x_2^2(x_1^2+x_2^2-1)+1/27$ of degree $6$, which is nonnegative but not
a sum of squares, and with a global minimum $f_\min=0$ on $[-1,1]^2$, attained at  four global minimizers $x^*=(\pm (1/3)^{1/2}, \pm (1/3)^{1/2})$.
The values $\rho_d$  are displayed in Table \ref{tab1} for $d=3,4,5$.
\begin{table}[h!]
\begin{center}
\begin{tabular}{|| l | l | l ||}
\hline
$d$ & $\lambda^*$  & $\rho_d$ \\
\hline
\hline
$3$ &  $\approx 10^{-3}\,(5.445,  5.367 , 5.367)$ & $\approx 1.6\, 10^{-2}$\\
$4$& $\approx 10^{-4}\,(2.4 ,  9.36 , 9.36)$ &$\approx 2.\,10^{-3}$\\
$5$& $\approx 10^{-5}\,(0.04 ,  4.34, 4.34)$ &$\approx 8.\,10^{-5}$\\
\hline
\end{tabular}
\end{center}
\caption{\label{tab1} Best $1$-norm approximation for the Motzkin polynomial.}
\end{table}
For this example one has $f \in \mathcal{T}(1-x_1,1+x_1,1-x_2,1+x_2)_{6}$ so that
the lower bounds $-\rho_d$  on $f_\min = 0$ via Corollary \ref{cor:rho_d} are not tight for $d = 3,4,5$.
\end{example}

\section{$1$-norm appoximation by kernels}
\label{sec:kernel approximation}
\rev{Corollary \ref{cor:Juan} showed that one may bound the errors in the Schm\"{u}dgen SDP hierarchy \eqref{Schmuedgen_SDP2} in terms
of suitable $1$-norm approximations of the objective function. In this section we therefore discuss a systematic approach to $1$-norm approximation of polynomials via positive kernels. This is a classical approach in approximation theory, see e.g.\ \cite{Altomare2011}, and the survey \cite{Weisse2006}.}

 We now consider a kernel $K_r : \mathbb{R}^n \times \mathbb{R}^n \rightarrow \mathbb{R}$ given by
	\[
	K_r(\textbf{x},\textbf{y}) := \sum_{\alpha \in \mathbb{N}^n_r} \lambda_\alpha \hat T_\alpha(\textbf{x}) \hat T_\alpha(\textbf{y}),
	\]
	for given constants $\lambda_\alpha \in [0,1]$ for $\alpha \in \mathbb{N}^n_r$.
	The convolution operator:
	\begin{equation}\label{eq:convolution}
	  \mathcal{K}^{(r)}(f)(\textbf{x}) := \int_{\mathbf{K}} f(\textbf{y})K_r(\textbf{x},\textbf{y})\mathrm{d}\mu(\textbf{y}),
	\end{equation}
		maps any  integrable function $f$ to a polynomial of degree at most $r$. More precisely,
	\[
	\mathcal{K}^{(r)}(f)(\textbf{x})  = \sum_{\alpha \in \mathbb{N}^n_r} b_\alpha \hat T_\alpha(\textbf{x}) \,,
	\mbox{ where }
	b_\alpha = \langle \hat T_\alpha, f \rangle_\mu\, \lambda_\alpha.
	\]
	The coefficients $\lambda_\alpha$ of the kernel $K_r$ determine the approximation.
	In particular, if $f(\mathbf{x}) = \sum_{\alpha \in \mathbb{N}^n_d} f_\alpha  T_\alpha(\mathbf{x})$ is a polynomial of degree $d\le r$, then

	\[
	\mathcal{K}^{(r)}(f)(\textbf{x})  = \sum_{\alpha \in \mathbb{N}^n_d} \lambda_\alpha f_\alpha T_\alpha(\textbf{x}) \,,
		\]
	so that the operator $\mathcal{K}^{(r)}$ is a  projection if all $\lambda_\alpha = 1$ (the so-called Dirichlet  or Christoffel-Darboux kernel, see e.g.\ \cite{Christoffel-Darboux}).
	\rev{If $\lambda_{\bf 0} =1$, then the convolution operator preserves constant functions. We will assume this property throughout.}

Also note that, if $f(\mathbf{x}) = \sum_{\alpha \in \mathbb{N}^n_d} f_\alpha  T_\alpha(\mathbf{x})$, and $r \ge d$,
\[
\| \mathcal{K}^{(r)}(f) - f\|_{1, T}  = \sum_{\alpha \in \mathbb{N}^n_d} |1- \lambda_\alpha||f_\alpha| \le \max_{\alpha \in \mathbb{N}^n_d}|1- \lambda_\alpha| \cdot \|f\|_{1, T}.
\]

We are interested in positive kernels that map nonnegative functions on $\mathbf{K}=[-1,1]^n$ to nonnegative functions, i.e.\ positive linear
approximation operators in the sense of Korovkin; see e.g.\ \cite{Altomare2011}.
More precisely, \rev{for given degree $d \in \mathbb{N}$, we are interested in kernels where the coefficients $\lambda_\alpha$ are such that
\begin{equation}\label{kernel_preorder_condition}
  \mathcal{K}^{(r)}(f)(\textbf{x}) := \int_{\mathbf{K}} f(\textbf{y})K_r(\textbf{x},\textbf{y})\mathrm{d}\mu(\textbf{y}) \in \mathcal{T}(1\pm x_1,\ldots, 1 \pm x_n)_r \quad \forall \mbox{ $f \in \mathbb{R}[\x]_d$ nonnegative on $\mathbf{K}$.}
\end{equation}
}
Note that the Dirichlet kernel is not positive; see e.g.\ \cite{Weisse2006}. However, we would like the coefficients $\lambda_\alpha$ as close to
  $1$ as possible, to approximate the Dirichlet kernel, since this kernel gives
  a spectral rate of convergence for analytic functions; see e.g.\ \cite{Trefethen2017}.

This leads us naturally to the following abstract optimization problem, that aims to construct the best kernel that meets our requirements.
\rev{
\begin{equation}\label{SDP:kappa_r}
  \Theta^{r}_{n,d} := \inf \left\{ \max_{\alpha \in \mathbb{N}^n_d}|1- \lambda_\alpha| \; : \;
K_r(\textbf{x},\textbf{y})  = 1+ \sum_{ \alpha \in {\mathbb{N}}^n_r\setminus\{\mathbf{0}\}} \lambda_\alpha \hat T_\alpha(\textbf{x})\hat T_\alpha(\textbf{y}) \mbox{ satisfies } \eqref{kernel_preorder_condition}\right\}.
\end{equation}
}

One may bound the optimal value $\hat f_{(r)}$ of the problem \eqref{Schmuedgen_SDP2} in terms of $\Theta^{r}_{n,d}$ as follows.

\begin{theorem}
\label{thm:error_bound1}
Let $f \in \mathbb{R}[\mathbf{x}]_d$ and $f_\min = \min_{\mathbf{x} \in \mathbf{K}} f(\mathbf{x})$.
One has, for each $r \in \mathbb{N}$ with $r \ge d$,
\[
f - (f_\min - \Theta^{r}_{n,d}\|f\|_{1, T}) \in \mathcal{T}(1\pm x_1,\ldots, 1 \pm x_n)_{r}
\]
which implies $f_\min - \hat f_{(r)}  \le \Theta^{r}_{n,d}\|f\|_{1, T}$, where $\hat f_{(r)}$ is the SDP bound from \eqref{Schmuedgen_SDP2}.
\end{theorem}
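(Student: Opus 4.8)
The plan is to derive the statement directly from the $1$-norm approximation machinery already established, in particular Corollary \ref{cor:Juan}, combined with the definition of $\Theta^{r}_{n,d}$ in \eqref{SDP:kappa_r}. First I would observe that it suffices to produce a polynomial $p \in \mathcal{T}(1\pm x_1,\ldots,1\pm x_n)_r$ that is close to $g := f - f_{\min}$ in the $\|\cdot\|_{1,T}$-norm, since $g$ is nonnegative on $\mathbf{K}$ with minimum $0$; then inequality \eqref{fr bound ito one norm} in Corollary \ref{cor:Juan} immediately gives $\hat f_{(r)} \ge f_{\min} - \|p - g\|_{1,T}$, and the containment $f - (f_{\min} - \|p-g\|_{1,T}) \in \mathcal{T}(\cdots)_r$ follows from the first part of that corollary applied to $g$ and $p$.

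Next I would fix, for any $\varepsilon > 0$, a near-optimal kernel $K_r$ attaining $\max_{\alpha \in \mathbb{N}^n_d} |1 - \lambda_\alpha| \le \Theta^{r}_{n,d} + \varepsilon$ and satisfying property \eqref{kernel_preorder_condition}, and set $p := \mathcal{K}^{(r)}(g)$. Here I use that $g = f - f_{\min}$ is a nonnegative polynomial of degree $d \le r$ on $\mathbf{K}$, so \eqref{kernel_preorder_condition} gives $p \in \mathcal{T}(1\pm x_1,\ldots,1\pm x_n)_r$. For the approximation quality, I would invoke the displayed identity established just before \eqref{SDP:kappa_r}, namely that for a degree-$d$ polynomial $g = \sum_{\alpha\in\mathbb{N}^n_d} g_\alpha T_\alpha$ one has $\|\mathcal{K}^{(r)}(g) - g\|_{1,T} = \sum_{\alpha\in\mathbb{N}^n_d} |1-\lambda_\alpha||g_\alpha| \le \max_{\alpha\in\mathbb{N}^n_d}|1-\lambda_\alpha| \cdot \|g\|_{1,T} \le (\Theta^{r}_{n,d} + \varepsilon)\|g\|_{1,T}$. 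The one subtlety is that I want the bound in terms of $\|f\|_{1,T}$, not $\|f - f_{\min}\|_{1,T}$; since the constant term (the $T_{\mathbf 0}$ coefficient) is the only one that changes when subtracting $f_{\min}$, and since $\lambda_{\mathbf 0} = 1$ is assumed throughout (the kernel preserves constants), the term $|1 - \lambda_{\mathbf 0}||g_{\mathbf 0}| = 0$ drops out, so in fact $\|\mathcal{K}^{(r)}(g) - g\|_{1,T} = \sum_{\alpha \neq \mathbf 0}|1-\lambda_\alpha||f_\alpha| \le (\Theta^{r}_{n,d}+\varepsilon)\|f\|_{1,T}$.

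Combining these, $p \in \mathcal{T}(1\pm x_1,\ldots,1\pm x_n)_r$ with $\|p - g\|_{1,T} \le (\Theta^{r}_{n,d}+\varepsilon)\|f\|_{1,T}$, and Corollary \ref{cor:Juan} applied to the pair $(g, p)$ yields $g + (\Theta^{r}_{n,d}+\varepsilon)\|f\|_{1,T} \in \mathcal{T}(\cdots)_r$, i.e.\ $f - (f_{\min} - (\Theta^{r}_{n,d}+\varepsilon)\|f\|_{1,T}) \in \mathcal{T}(\cdots)_r$, hence $\hat f_{(r)} \ge f_{\min} - (\Theta^{r}_{n,d}+\varepsilon)\|f\|_{1,T}$. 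Letting $\varepsilon \downarrow 0$ (using that the truncated pre-ordering of fixed degree $r$ is a closed cone, or simply arguing on the level of the optimal value $\hat f_{(r)}$ which is a supremum) gives both the membership statement with $\Theta^{r}_{n,d}$ itself and the inequality $f_{\min} - \hat f_{(r)} \le \Theta^{r}_{n,d}\|f\|_{1,T}$. The main obstacle I anticipate is the bookkeeping around the constant term and the $\inf$-versus-$\min$ issue in \eqref{SDP:kappa_r}: one must be careful that the kernel attaining the value really does have $\lambda_{\mathbf 0}=1$ and really does satisfy \eqref{kernel_preorder_condition}, and that passing to the limit in $\varepsilon$ does not break the containment; everything else is a direct substitution into results already proved.
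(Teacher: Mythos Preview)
Your proposal is correct and follows essentially the same approach as the paper: pick a (near-)optimal kernel feasible for \eqref{SDP:kappa_r}, use that $\mathcal{K}^{(r)}$ preserves constants to reduce $\|\mathcal{K}^{(r)}(f-f_{\min})-(f-f_{\min})\|_{1,T}$ to $\|\mathcal{K}^{(r)}(f)-f\|_{1,T}\le \Theta^{r}_{n,d}\|f\|_{1,T}$, and then invoke Corollary~\ref{cor:Juan}/\eqref{fr bound ito one norm}. Your added $\varepsilon$-argument and the remark on closedness of the truncated pre-ordering are in fact more careful than the paper, which silently works with a feasible kernel without addressing whether the infimum in \eqref{SDP:kappa_r} is attained.
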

\proof
If $\lambda_\alpha$ ($\alpha \in {\mathbb{N}}^n_r$) denotes \revv{a feasible solution of problem \eqref{SDP:kappa_r}, one has
\[
\| \mathcal{K}^{(r)}(f) - f\|_{1, T}  = \sum_{\alpha \in \mathbb{N}^n_d} |1- \lambda_\alpha||f_\alpha| \le \max_{\alpha \in {\mathbb{N}}^n_r} |1- \lambda_\alpha| \|f\|_{1,T}.
\]
}
\rev{Also, with reference to \eqref{fr bound ito one norm},  one has:
\[
\mathcal{K}^{(r)}(f-f_\min) - (f-f_\min) = \mathcal{K}^{(r)}(f)-f_\min - (f-f_\min)= \mathcal{K}^{(r)}(f) - f,
\]
since $\mathcal{K}^{(r)}$ preserves constant functions.
Finally, by assumption one has $  \mathcal{K}^{(r)}(f-f_\min)  \in \mathcal{T}(1\pm x_1,\ldots, 1 \pm x_n)_{r}$, so that the required result follows from
\eqref{fr bound ito one norm}.}\qed

Some comments on the theorem:
\begin{itemize}
\item
The values $\Theta^{r}_{n,d}$ do not depend on $f$, and may be upper-bounded a priori via SDP, as we will show later. Thus the right-hand-side
of the inequality
$f_\min - \hat f_{(r)}  \le \Theta^{r}_{n,d}\|f\|_{1, T}$ may readily be upper bounded, given  a table of SDP upper bounds on the values $\Theta^{r}_{n,d}$.
\item
We will show in the next section that $\Theta^{r}_{n,d} = O(1/r^2)$ by considering a specific kernel that is feasible for problem \eqref{SDP:kappa_r}. This is essentially the same
argument used by Laurent and Slot \cite{Laurent2021}. In particular, it suffices to use products of so-called {\em univariate Jackson kernels}.
\end{itemize}

\section{Kernels of the Jackson type}
\label{sec:Jackson}
For $r\in \N$, consider the univariate kernel $\jackpoly_r : \R \times \R \to \R$ given by:
\begin{equation}
    \label{EQ:univariatejackpoly}
    \jackpoly_r(x, y) := 1 +  \sum_{k=1}^r \jacksoncoef{k}{r} \hat T_{k}(x) \hat T_{k}(y),
\end{equation}
where the coefficients $\jacksoncoef{k}{r}$ are given by
\begin{equation}
    \label{EQ:jacksoncoef}
    \jacksoncoef{k}{r} = \frac{1}{r+2}\big((r+2-k) \cos(k \theta_r) + \frac{\sin(k\theta_r)}{\sin(\theta_r)} \cos(\theta_r) \big) \quad (1 \leq k \leq r),
\end{equation}
with $\theta_r = \frac{\pi}{r+2}$.

This kernel is called the {\em univariate Jackson kernel} in \cite{Weisse2006} and \cite{Laurent2021}. In the approximation theory literature there is another object which is known by that name,
that corresponds to a kernel in the original analysis of Jackson \cite{Jackson1911,Jackson1912}, where he showed that the famous approximation
bounds by Bernstein \cite{Bernstein1912} were tight.
 In \cite{Kirchsner2023}, the kernel \eqref{EQ:univariatejackpoly} is called the \emph{minimum resolution kernel},
  since it minimizes the resolution over all positive univariate kernels of a given degree, where the resolution is defined by
  $$\sigma_r := \left(\int_{[-1,1] \times [-1,1]} ({x}-{y})^2 K_r({x},{y}) \mathrm{d}\mu({x})\mathrm{d}\mu({y})\right)^{1/2}.$$
  In this paper we will also speak about the univariate Jackson kernel, with the caveat that the term is ambiguous. The interested reader may
  consult  \cite{Kirchsner2023} for more details on the history and terminology.

The following properties of the univariate Jackson kernel were shown in \cite{Laurent2021}; see also \cite{Weisse2006} and the references therein.
\begin{proposition}[Proposition 6 in Laurent and Slot \cite{Laurent2021}]
\label{PROP:jacksoncoefficients}
For every $r \in \N$  we have:
\begin{enumerate}
	\item[(i)] $\jackpoly_r(x, y) \geq 0$ for all $x, y \in [-1, 1]$,
	\item[(ii)] $1 \geq \jacksoncoef{k}{r} > 0$ for all $1 \leq k \leq r$,  and
    \item[(iii)]  $|1 - \jacksoncoef{k}{r}| = 1 - \jacksoncoef{k}{r} \leq {\pi^2 k^2\over (r+2)^2}$ for all $1 \leq k \leq r$.
\end{enumerate}
\end{proposition}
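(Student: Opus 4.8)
The plan is to verify the three claimed properties of the univariate Jackson kernel $\jackpoly_r$ essentially by direct computation with the explicit coefficients $\jacksoncoef{k}{r}$ in \eqref{EQ:jacksoncoef}, exploiting the well-known closed form for this kernel. First I would record the key identity that $\jackpoly_r$ is (up to normalization) the square of the Fej\'er-type trigonometric kernel: writing $x = \cos s$, $y = \cos t$, one has
\[
\jackpoly_r(\cos s, \cos t) = c_r \left( \frac{\sin\left(\tfrac{(r+2)(s-t)}{2}\right)}{\sin\left(\tfrac{s-t}{2}\right)} \cdot \frac{1}{\cos(s-t) - \cos\theta_r} \right)^2 \sin^2\theta_r \cdot (\text{something nonnegative}),
\]
or more cleanly derive the nonnegativity from the fact that $\jackpoly_r$ arises as $|p(s-t)|^2 + |p(s+t)|^2$-type sums of a nonnegative univariate trigonometric polynomial (the Jackson/Fej\'er construction). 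The cleanest route for (i) is to cite the classical fact that $\jackpoly_r$ is the reproducing-type kernel associated with the optimization that minimizes the resolution $\sigma_r$ over positive kernels — this is exactly the reference \cite{Kirchsner2023} and \cite{Weisse2006} mentioned just above, and Proposition 6 of \cite{Laurent2021} — so I would simply reproduce their argument: the coefficients $\jacksoncoef{k}{r}$ are, by design, $\langle \hat T_k, \hat T_k\rangle$-weighted autocorrelations of the sequence realizing the nonnegative Fej\'er kernel, which forces $\jackpoly_r(x,y)\ge 0$ pointwise on $[-1,1]^2$.

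For (ii) and (iii), which are purely about the scalars $\jacksoncoef{k}{r}$, I would argue directly from \eqref{EQ:jacksoncoef}. Set $\theta_r = \pi/(r+2)$ and $\jacksoncoef{k}{r} = \tfrac{1}{r+2}\big((r+2-k)\cos(k\theta_r) + \tfrac{\sin(k\theta_r)}{\sin\theta_r}\cos\theta_r\big)$. The upper bound $\jacksoncoef{k}{r}\le 1$ and positivity $\jacksoncoef{k}{r}>0$ for $1\le k\le r$: the positivity follows because the coefficients are coefficients of a nonnegative kernel with $\jacksoncoef{0}{r}=1$ normalization, hence lie in $[0,1]$ by the standard Fej\'er-kernel coefficient bound (again, exactly as in \cite{Laurent2021}); alternatively one checks it by the trigonometric-sum manipulation. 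For (iii), the point is to bound $1 - \jacksoncoef{k}{r}$. Write $1 - \jacksoncoef{k}{r} = \tfrac{1}{r+2}\big((r+2) - (r+2-k)\cos(k\theta_r) - \tfrac{\sin(k\theta_r)}{\sin\theta_r}\cos\theta_r\big)$ and expand: $= \tfrac{1}{r+2}\big((r+2)(1-\cos k\theta_r) + k\cos k\theta_r - \tfrac{\sin k\theta_r\cos\theta_r}{\sin\theta_r}\big)$. Using $1-\cos k\theta_r = 2\sin^2(k\theta_r/2)\le \tfrac{k^2\theta_r^2}{2}$ handles the first term, giving $(r+2)\cdot\tfrac{k^2\theta_r^2}{2(r+2)} = \tfrac{k^2\theta_r^2}{2} = \tfrac{\pi^2 k^2}{2(r+2)^2}$; the remaining term $\tfrac{1}{r+2}\big(k\cos k\theta_r - \tfrac{\sin k\theta_r\cos\theta_r}{\sin\theta_r}\big)$ must then be shown to be at most $\tfrac{\pi^2 k^2}{2(r+2)^2}$ as well, i.e.\ $k\sin\theta_r\cos k\theta_r - \sin k\theta_r\cos\theta_r \le \tfrac{k^2\theta_r\sin\theta_r}{2(r+2)}\cdot(r+2) = \tfrac{k^2\theta_r\sin\theta_r}{2}$ roughly; this reduces to a one-variable inequality in $\theta_r\in(0,\pi/3]$ and $k\in\{1,\dots,r\}$ that can be checked by elementary calculus (Taylor estimates of $\sin,\cos$), and the sign claim $|1-\jacksoncoef{k}{r}| = 1-\jacksoncoef{k}{r}$ is just (ii).

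The main obstacle I anticipate is (iii): the naive split above produces a leftover term whose sign and magnitude are not obvious, so the estimate needs a slightly more careful grouping — for instance writing $1-\jacksoncoef{k}{r}$ in a manifestly nonnegative form such as a weighted sum of $\sin^2$'s, which is what the underlying Fej\'er-kernel structure guarantees but which takes a few lines of trigonometric identities to extract. Because all three statements are quoted verbatim as ``Proposition 6 in Laurent and Slot \cite{Laurent2021}'' and are also in \cite{Weisse2006}, the honest and efficient route here is to cite that proof rather than reprove it; if a self-contained argument is wanted, I would present the $\sin^2$-decomposition of $1-\jacksoncoef{k}{r}$ and the Fej\'er-square representation of $\jackpoly_r$, and then (i)--(iii) all drop out. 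I expect no dependence on $n$ or $d$ anywhere, since this is a purely univariate statement; the multivariate consequences come later by tensorization.
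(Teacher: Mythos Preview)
The paper does not prove this proposition at all: it is stated as ``Proposition 6 in Laurent and Slot \cite{Laurent2021}'' and the surrounding text simply says ``The following properties of the univariate Jackson kernel were shown in \cite{Laurent2021}; see also \cite{Weisse2006} and the references therein.'' Your instinct that ``the honest and efficient route here is to cite that proof rather than reprove it'' is exactly what the paper does, so on that level you match the paper perfectly.

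Your sketched self-contained arguments are therefore extra, and a couple of them are shaky as written. For (ii) you say positivity of the $\jacksoncoef{k}{r}$ follows ``because the coefficients are coefficients of a nonnegative kernel with $\jacksoncoef{0}{r}=1$ normalization, hence lie in $[0,1]$'': nonnegativity of a kernel does \emph{not} by itself force its Chebyshev coefficients to be nonnegative (it only forces $|\jacksoncoef{k}{r}|\le 1$ via the integral against $\hat T_k(x)\hat T_k(y)$), so you would really need the autocorrelation/Fej\'er-square structure you allude to, not just positivity. For (iii) your split leaves a residual term whose bound you yourself flag as the main obstacle; the actual proof in \cite{Laurent2021} handles this cleanly, and since the paper is content to cite it, there is no need to push your calculus estimate through. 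If you do want a self-contained version, the safest path is the one you mention at the end: write $\jackpoly_r$ as an explicit nonnegative square (the minimum-resolution/Fej\'er representation), from which (i)--(iii) follow without the delicate trigonometric bookkeeping.
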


One may obtain a multivariate kernel by multiplying univariate Jackson kernels in the obvious way:
\begin{equation}
	\label{EQ:multijackpoly}
	\multijackpoly_{nr}(\x, \y) := \prod_{i = 1}^n \jackpoly_r(x_i, y_i),
\end{equation}
where $\jackpoly_r$ is the univariate Jackson kernel from $\eqref{EQ:univariatejackpoly}$.
The coefficients of this kernel are given by
\[
\jacksoncoef{\alpha}{r} := \prod_{i = 1}^n \jacksoncoef{\alpha_i}{r} \quad (\alpha \in \N^n_r).
\]

The following is a refinement of a lemma by Laurent and Slot \cite[Lemma 12]{Laurent2021}.

\begin{lemma}\label{lemlambda}
Let $\pi d < r+2$. For any $\alpha \in \N^n_d$  we have
$|1-\lambda^r_\alpha| \le \tfrac {\pi^2 d^2}{(r+2)^2}$.
\end{lemma}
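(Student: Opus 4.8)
The plan is to reduce the multivariate statement to the univariate bound (iii) of Proposition~\ref{PROP:jacksoncoefficients} via a telescoping argument, exploiting that $\lambda^r_\alpha = \prod_{i=1}^n \lambda^r_{\alpha_i}$ and that each univariate coefficient lies in $(0,1]$. First I would observe that since $\alpha \in \N^n_d$, at most $d$ of the components $\alpha_i$ are nonzero and each nonzero $\alpha_i$ satisfies $\alpha_i \le d < r+2$, so part~(iii) applies to each factor: $0 \le 1 - \lambda^r_{\alpha_i} \le \pi^2\alpha_i^2/(r+2)^2$, and for $\alpha_i = 0$ we have $\lambda^r_0 = 1$ (the kernel preserves constants), so those factors contribute nothing.

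The key step is the elementary inequality: if $a_1,\dots,a_n \in [0,1]$ then $1 - \prod_{i=1}^n a_i \le \sum_{i=1}^n (1 - a_i)$. I would prove this by induction on $n$ (or cite it as standard), writing $1 - \prod_{i=1}^n a_i = (1 - a_n) + a_n(1 - \prod_{i=1}^{n-1} a_i) \le (1-a_n) + (1 - \prod_{i=1}^{n-1}a_i)$ since $a_n \le 1$. Applying this with $a_i = \lambda^r_{\alpha_i}$, and using that $\lambda^r_\alpha = \prod_i \lambda^r_{\alpha_i} \in (0,1]$ so that $|1 - \lambda^r_\alpha| = 1 - \lambda^r_\alpha$, gives
\[
|1 - \lambda^r_\alpha| \;\le\; \sum_{i=1}^n \left(1 - \lambda^r_{\alpha_i}\right) \;\le\; \sum_{i=1}^n \frac{\pi^2 \alpha_i^2}{(r+2)^2} \;=\; \frac{\pi^2}{(r+2)^2} \sum_{i=1}^n \alpha_i^2.
\]

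To finish, I would bound $\sum_{i=1}^n \alpha_i^2 \le \left(\sum_{i=1}^n \alpha_i\right)^2 = |\alpha|^2 \le d^2$, which holds because the $\alpha_i$ are nonnegative integers (so $\sum \alpha_i^2 \le (\sum \alpha_i)^2$). This yields $|1 - \lambda^r_\alpha| \le \pi^2 d^2/(r+2)^2$, as claimed; the hypothesis $\pi d < r+2$ is exactly what is needed so that part~(iii) of Proposition~\ref{PROP:jacksoncoefficients} is applicable (and incidentally ensures the bound is nontrivial, i.e.\ below $1$). I do not anticipate a serious obstacle here — the only mild subtlety is being careful that the "refinement" over \cite[Lemma 12]{Laurent2021} comes precisely from using $\sum \alpha_i^2 \le |\alpha|^2 \le d^2$ rather than a cruder bound like $\sum_{i=1}^n \alpha_i^2 \le n d^2$ or summing over all of $\N^n_r$; keeping the dependence on $d$ (not $n$) throughout is the whole point.
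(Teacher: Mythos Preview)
Your proposal is correct and essentially the same argument as the paper's: the paper phrases the key step as the generalized Bernoulli inequality $\prod_i(1-y_i) \ge 1 - \sum_i y_i$ applied with $y_i = \pi^2\alpha_i^2/(r+2)^2$, which is exactly your telescoping bound $1 - \prod_i a_i \le \sum_i(1-a_i)$ after the substitution $a_i = 1 - y_i$. One small remark on the role of the hypothesis: part~(iii) of Proposition~\ref{PROP:jacksoncoefficients} only requires $1 \le k \le r$, so $d \le r$ suffices there; in the paper's ordering the condition $\pi d < r+2$ is what guarantees $y_i < 1$ in the Bernoulli inequality, whereas in your ordering the telescoping step needs only $a_i \in [0,1]$, which already follows from part~(ii), so your argument in fact uses the hypothesis even more lightly than you indicate.
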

\begin{proof}
From Proposition \ref{PROP:jacksoncoefficients} part (iii), we know that $1 \ge \lambda^r_k \ge 1 - \frac {\pi^2 k^2}{(r+2)^2}\ge 0$. Then,
\[1 \ge \lambda^r_\alpha \ge \prod_{i=1}^n \left( 1 - \tfrac {\pi^2 (\alpha_i)^2}{(r+2)^2} \right) \ge 1 - \tfrac {\pi^2}{(r+2)^2} \sum_{i=1}^n (\alpha_i)^2 \ge 1 - \tfrac {\pi^2 d^2}{(r+2)^2},\]
where the third inequality is the generalised Bernoulli inequality:
\[
\prod_{i=1}^n \left( 1 - y_i \right) \ge 1 - \sum_{i=1}^n y_i \quad \mbox{ if $y_i < 1 \; \forall i \in [n]$}.
\]
Note that the condition $y_i <1$ for all $i$ in the Bernoulli inequality corresponds to the condition $\pi d < r+2$ in the statement of the lemma,
since $\alpha \in \N^n_d$.
\end{proof}

We obtain the following upper bound on $\Theta^{rn}_{n,d}$.
\begin{proposition}
\label{prop:Theta_bound_nr}
$\Theta^{rn}_{n,d} \le \frac {\pi^2 d^2}{(r+2)^2}$ for all $r$ and $d$ such that $\pi d < r+2$.
\end{proposition}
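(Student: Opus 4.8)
The plan is to exhibit an explicit feasible solution for the optimization problem \eqref{SDP:kappa_r} that defines $\Theta^{rn}_{n,d}$ and bound its objective value. The natural candidate is the multivariate Jackson kernel $\multijackpoly_{nr}(\x,\y) = \prod_{i=1}^n \jackpoly_r(x_i,y_i)$ from \eqref{EQ:multijackpoly}, whose coefficients are $\lambda^r_\alpha = \prod_{i=1}^n \lambda^{r}_{\alpha_i}$ for $\alpha \in \N^n_r$. Note this kernel has degree at most $r$ in each block of variables, hence total degree at most $rn$, which matches the superscript $rn$ in $\Theta^{rn}_{n,d}$, and it satisfies $\lambda^r_{\mathbf 0} = 1$ so that the associated convolution operator preserves constants.

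First I would verify that this kernel is feasible for \eqref{SDP:kappa_r}, i.e.\ that condition \eqref{kernel_preorder_condition} holds: for every $f \in \R[\x]_d$ nonnegative on $\mathbf K = [-1,1]^n$, the convolution $\mathcal{K}^{(rn)}(f)$ must lie in $\mathcal{T}(1\pm x_1,\ldots,1\pm x_n)_{rn}$. This is where the main work lies. The key observation is that each univariate Jackson kernel $\jackpoly_r(x_i,y_i)$ is nonnegative on $[-1,1]^2$ (Proposition \ref{PROP:jacksoncoefficients}(i)) and has degree $r$ in $x_i$; by the (multivariate generalization of the) Markov--Luk\'acz Theorem \ref{THM:MarkovLukacz}, or more directly by Theorem \ref{thm:norm1} applied appropriately, the kernel $\multijackpoly_{nr}(\x,\y)$ viewed as a polynomial in $\x$ with parameters $\y \in [-1,1]^n$ lies in the truncated pre-ordering $\mathcal{T}(1\pm x_1,\ldots,1\pm x_n)_{rn}$ with sum-of-squares coefficients that are themselves polynomials in $\y$ nonnegative on $[-1,1]^n$. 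Integrating $f(\y)\multijackpoly_{nr}(\x,\y)$ against the nonnegative measure $\mathrm d\mu(\y)$ over $\mathbf K$, with $f \ge 0$ on $\mathbf K$, then preserves membership in the pre-ordering (integration of a convergent family of elements of the finite-dimensional cone $\mathcal{T}(\cdots)_{rn}$, which is closed, stays in the cone). This yields $\mathcal{K}^{(rn)}(f) \in \mathcal{T}(1\pm x_1,\ldots,1\pm x_n)_{rn}$ as required.

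Once feasibility is established, the bound on the objective value is immediate from Lemma \ref{lemlambda}: under the hypothesis $\pi d < r+2$, one has $|1 - \lambda^r_\alpha| \le \frac{\pi^2 d^2}{(r+2)^2}$ for all $\alpha \in \N^n_d$, so $\max_{\alpha \in \N^n_d} |1 - \lambda^r_\alpha| \le \frac{\pi^2 d^2}{(r+2)^2}$. Since $\Theta^{rn}_{n,d}$ is defined as an infimum over feasible kernels of this maximum, and the multivariate Jackson kernel is one such feasible kernel, we conclude $\Theta^{rn}_{n,d} \le \frac{\pi^2 d^2}{(r+2)^2}$.

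The main obstacle, and the step deserving the most care, is the feasibility verification — specifically showing that the convolution against a positive kernel of the prescribed block-degree lands in the truncated pre-ordering with the correct degree bound $rn$. The degree bookkeeping must be done carefully: one needs the sum-of-squares multipliers $\sigma_I(\x)$ arising from $\multijackpoly_{nr}$ to satisfy $|I| + \deg_\x \sigma_I \le rn$, which follows because each univariate factor contributes degree exactly $r$ in its own variable and the Markov--Luk\'acz decomposition of a degree-$r$ nonnegative univariate polynomial respects the degree bound $r$ in the relevant truncated pre-ordering. I would also note that this argument is essentially the one used by Laurent and Slot \cite{Laurent2021}, adapted here to the pre-ordering $\mathcal{T}(1\pm x_1,\ldots,1\pm x_n)$ rather than $\mathcal{T}(1-x_1^2,\ldots,1-x_n^2)$, which is in fact cleaner because of Theorem \ref{thm:norm1}.
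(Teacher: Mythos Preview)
Your proposal is correct and follows essentially the same approach as the paper: exhibit the multivariate Jackson kernel $\multijackpoly_{nr}$ as a feasible point for problem \eqref{SDP:kappa_r} and invoke Lemma~\ref{lemlambda} to bound its objective value. The only difference is in the feasibility step: the paper simply cites \cite[Lemma~9]{Laurent2021} (and later gives a self-contained proof via Tchakaloff's cubature theorem in Proposition~\ref{prop:cubature}), whereas you sketch a direct argument combining the univariate Markov--Luk\'acz decomposition with closedness of the finite-dimensional cone $\mathcal{T}(1\pm x_1,\ldots,1\pm x_n)_{rn}$; the Tchakaloff route is cleaner because it sidesteps the (true but nontrivial) closedness claim and the question of how the SOS multipliers depend on~$\y$.
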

\begin{proof}
The result follows from the definition of $\Theta^{r}_{n,d}$ in \eqref{SDP:kappa_r} and Lemma \ref{lemlambda}, provided that
the multivariate Jackson kernel \eqref{EQ:multijackpoly} satisfies the property \eqref{kernel_preorder_condition}.
This indeed holds, as is shown in \cite[Lemma 9]{Laurent2021}.
\end{proof}
\rev{
We will need the argument from \cite[Lemma 9]{Laurent2021} once more in the next section in a more general context, and there we will give a self-contained proof
for the sake of completeness; see Proposition \ref{prop:cubature}.}

\section{SDP bounds on $\Theta^{r}_{n,d}$}
\label{sec:SDP bounds}
In this section we will prove that the following inequality hods.

\begin{theorem}
\label{proposition:Theta_SDP_bound}
The value $\Theta_{n,d}^r$ as defined in \eqref{SDP:kappa_r} is upper bounded as follows:
\begin{equation}\label{SDP_bound_Theta}
  \Theta_{n,d}^r \le \inf\left\{\max\{|1-p_{{\alpha}}|:|{\alpha}| \le d\}: \mathbf{x} \mapsto 1+ \sum_{\alpha \in \N^n_d\setminus \{\mathbf{0}\}}  2^{\omega(\alpha)} p_{{\alpha}}  T_{\alpha}(\mathbf{x})  \in \mathcal{T}(1\pm x_1,\dots,1\pm x_n)_{r}\right\},
\end{equation}
where $\omega(\alpha)$ denotes the number of nonzero elements of $\alpha$.
\end{theorem}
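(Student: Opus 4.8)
The goal is to show that any polynomial of the form $1 + \sum_{\alpha \in \N^n_d\setminus\{\mathbf 0\}} 2^{\omega(\alpha)} p_\alpha T_\alpha$ that lies in the truncated pre-ordering $\mathcal{T}(1\pm x_1,\dots,1\pm x_n)_r$ gives rise to a feasible kernel for problem \eqref{SDP:kappa_r} with objective value $\max\{|1-p_\alpha| : |\alpha|\le d\}$. The plan is to build, from such a $p$, a kernel $K_r(\mathbf x,\mathbf y)$ of the required form $1 + \sum_{\alpha\in\N^n_r\setminus\{\mathbf 0\}} \lambda_\alpha \hat T_\alpha(\mathbf x)\hat T_\alpha(\mathbf y)$ and to verify that it satisfies \eqref{kernel_preorder_condition}.

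First I would set $\lambda_\alpha := p_\alpha$ for $\alpha\in\N^n_d\setminus\{\mathbf 0\}$ and $\lambda_\alpha := 0$ for $\alpha\in\N^n_r\setminus\N^n_d$, so the resulting kernel's objective in \eqref{SDP:kappa_r} is exactly $\max_{|\alpha|\le d}|1-\lambda_\alpha| = \max_{|\alpha|\le d}|1-p_\alpha|$ (the $\alpha$ with $d < |\alpha|$ contribute $|1-0|=1$, but those indices are outside $\N^n_d$, so they do not enter the max — I should double-check that the max in \eqref{SDP:kappa_r} is over $\N^n_d$, which it is). The crux is then to check \eqref{kernel_preorder_condition}: for every $g\in\R[\mathbf x]_d$ nonnegative on $\mathbf K$, the convolution $\mathcal K^{(r)}(g)(\mathbf x) = \sum_{\alpha\in\N^n_d} \lambda_\alpha g_\alpha T_\alpha(\mathbf x)$ lies in $\mathcal{T}(1\pm x_1,\dots,1\pm x_n)_r$. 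Here I would use the key point that the map $g\mapsto \mathcal K^{(r)}(g)$, when restricted to degree-$d$ polynomials, is a fixed linear operator acting on Chebyshev coefficients by $g_\alpha\mapsto\lambda_\alpha g_\alpha$; the hypothesis gives me that this operator "comes from" the pre-ordering element $p$.

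The main technical step is to realize $\mathcal K^{(r)}(g)$ as a cubature/convolution against the measure induced by $p$. Concretely, I would observe that for a degree-$d$ polynomial $g(\mathbf y)=\sum_\beta g_\beta T_\beta(\mathbf y)$ one can recover $g_\alpha\, 2^{-\omega(\alpha)}$ (up to normalization constants relating $T_\alpha$ and $\hat T_\alpha$, since $\|\hat T_\alpha\|_\mu^2 = 1$ while $\langle T_\alpha,T_\alpha\rangle_\mu = 2^{-\omega(\alpha)}$) as $\langle g, T_\alpha\rangle_\mu$. The factor $2^{\omega(\alpha)}$ in the statement is precisely the bookkeeping that converts between the Chebyshev inner-product normalization and the desired coefficient scaling; matching these constants carefully is where the proof must be done with care. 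Then $\mathcal K^{(r)}(g)(\mathbf x) = \sum_\alpha p_\alpha \cdot 2^{\omega(\alpha)}\langle g, T_\alpha\rangle_\mu\, T_\alpha(\mathbf x)$, which I can rewrite as $\int_{\mathbf K} g(\mathbf y)\big(1 + \sum_{\alpha\in\N^n_d\setminus\{\mathbf 0\}} 2^{\omega(\alpha)}p_\alpha T_\alpha(\mathbf x)T_\alpha(\mathbf y)\big)\mathrm d\mu(\mathbf y)$ — using that $g$ has degree $\le d$ so higher $\alpha$ integrate to zero, and that the constant term integrates to $g_{\mathbf 0}$. Now for fixed $\mathbf x$, the polynomial in $\mathbf y$ inside the integral is $q_{\mathbf x}(\mathbf y) := 1 + \sum 2^{\omega(\alpha)}p_\alpha T_\alpha(\mathbf x) T_\alpha(\mathbf y)$.

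The remaining step — and I expect this to be the real obstacle — is to argue that integrating a nonnegative $g$ against $q_{\mathbf x}(\mathbf y)$ produces an element of $\mathcal{T}(1\pm x_1,\dots,1\pm x_n)_r$ in the variable $\mathbf x$. The hypothesis tells me $q_{\mathbf 1}(\mathbf y) = 1 + \sum 2^{\omega(\alpha)}p_\alpha T_\alpha(\mathbf y) \in \mathcal{T}(1\pm y_1,\dots,1\pm y_n)_r$, but I need the two-variable statement: $q_{\mathbf x}(\mathbf y)$, viewed jointly, belongs to the bivariate pre-ordering $\mathcal{T}(1\pm x_1,\dots;1\pm y_1,\dots)_{r,r}$ introduced in \S\ref{sec:preliminaries}. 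This should follow by the same substitution trick as in the proof of the Lemma preceding Theorem \ref{thm:norm1}: using $1\pm T_{\alpha}(\mathbf x)T_\alpha(\mathbf y) \in \mathcal{T}(1\pm x_1,\dots,1\pm x_n;1\pm y_1,\dots,1\pm y_n)$ of degree $|\alpha|$ in each block (expanding products $1\pm T_{\alpha_i}(x_i)T_{\alpha_i}(y_i)$ via the identities $1+ab = \tfrac12[(1-a)(1+b)+(1+a)(1-b)]$ etc.), together with the fact that $q_{\mathbf 1}\in\mathcal{T}(\cdots)_r$ controls the "diagonal". Once $q_{\mathbf x}(\mathbf y)\in\mathcal{T}(\cdots;\cdots)_{r,r}$ is established, I would conclude exactly as in the (forthcoming) Proposition \ref{prop:cubature} / \cite[Lemma 9]{Laurent2021}: write $q_{\mathbf x}(\mathbf y)$ as a sum of terms $\sigma(\mathbf x,\mathbf y)\prod_{i\in I^+_{\mathbf x}}(1+x_i)\cdots\prod_{j\in I^+_{\mathbf y}}(1+y_j)\cdots$, integrate against $g(\mathbf y)\,\mathrm d\mu(\mathbf y)\ge 0$, and observe that $\int_{\mathbf K} \sigma(\mathbf x,\mathbf y) g(\mathbf y)\prod_{j\in I^\pm_{\mathbf y}}(1\pm y_j)\,\mathrm d\mu(\mathbf y)$ is a sum of squares in $\mathbf x$ of the appropriate degree (since $g\prod(1\pm y_j)\ge 0$ on $\mathbf K$ and $\sigma$ is SOS in $(\mathbf x,\mathbf y)$, one can diagonalize $\sigma$ and pull the $\mathbf x$-dependence out of each squared term, leaving $\int(\text{square in }\mathbf y)\cdot(\text{nonneg})\,\mathrm d\mu \ge 0$ as the coefficient). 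This yields $\mathcal K^{(r)}(g)\in\mathcal{T}(1\pm x_1,\dots,1\pm x_n)_r$, verifying \eqref{kernel_preorder_condition}, and hence the kernel with coefficients $\lambda_\alpha = p_\alpha$ is feasible for \eqref{SDP:kappa_r}, giving the claimed bound. The delicate points are the normalization constants ($T$ vs $\hat T$, the $2^{\omega(\alpha)}$ factors) and making the SOS-preservation-under-integration argument fully rigorous with the degree truncations $r_1=r_2=r$ tracked correctly.
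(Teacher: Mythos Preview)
Your overall architecture is correct and matches the paper: construct the kernel with $\lambda_\alpha = p_\alpha$, identify the convolution $\mathcal K^{(r)}(g)$ with $\int g(\mathbf y)\,q_{\mathbf x}(\mathbf y)\,\mathrm d\mu(\mathbf y)$, and then conclude via the Tchakaloff/cubature argument (Proposition~\ref{prop:cubature}). The normalization bookkeeping with $2^{\omega(\alpha)}$ is also right. The gap is exactly where you flagged it as ``the real obstacle'': passing from $q_{\mathbf 1}\in\mathcal T(1\pm u_1,\dots,1\pm u_n)_r$ to $q_{\mathbf x}(\mathbf y)\in\mathcal T(1\pm x_1,\dots;1\pm y_1,\dots)_{r,r}$.

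Your proposed mechanism for this step does not work. Knowing that $1\pm T_\alpha(\mathbf x)T_\alpha(\mathbf y)\in\mathcal T(\cdots;\cdots)_{|\alpha|,|\alpha|}$ is the bivariate analogue of the lemma behind Theorem~\ref{thm:norm1}; it lets you conclude that $\|q\|_{1,T}-q$ lies in the pre-ordering, but it gives no way to \emph{lift a given certificate} of $q_{\mathbf 1}$ in the $\mathbf u$-variables to a certificate of $q_{\mathbf x}$ in the $(\mathbf x,\mathbf y)$-variables. The phrase ``$q_{\mathbf 1}\in\mathcal T(\cdots)_r$ controls the diagonal'' does not name an actual operation on the certificate, and there is no linear map on coefficients that sends $T_\alpha(\mathbf u)\mapsto T_\alpha(\mathbf x)T_\alpha(\mathbf y)$ while manifestly preserving SOS structure and the degree truncation. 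What is needed is a substitution acting on the \emph{certificate itself}. The paper supplies this via the Chebyshev product formula $T_k(x)T_k(y)=\tfrac12\big(T_k(xy+\sqrt{(1-x^2)(1-y^2)})+T_k(xy-\sqrt{(1-x^2)(1-y^2)})\big)$ (Lemma~\ref{lem.id}), which realizes $q_{\mathbf x}(\mathbf y)$ as the symmetrization $K_p$ of $p=q_{\mathbf 1}$ under the substitutions $u_i\mapsto x_iy_i\pm\sqrt{(1-x_i^2)(1-y_i^2)}$. One must then check, by explicit computation (Lemmas~\ref{lem:inPre} and~\ref{lem:inPreGen}), that applying this substitution-and-symmetrize operator $\kappa^i$ to each building block $q^2$, $(1\pm u_i)q^2$, $(1-u_i^2)q^2$ of the certificate produces elements of the bivariate truncated pre-ordering with the correct separate degree bounds in $\mathbf x$ and $\mathbf y$; this is Proposition~\ref{prop:equiv formulations}, and it is the technical heart of the proof that your plan is missing.
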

The right-hand-side of the inequality \eqref{SDP_bound_Theta} may readily be computed via SDP for given (small) values of $n,r,d$; we list the resulting numerical bounds for various values of $n,r,d$ in an appendix to this paper. \rev{In Figure \ref{fig:plots} we plot some of the values of the SDP bound, for illustrative purposes.}

\begin{figure}[h!]
\centering
\begin{subfigure}[t]{0.45\textwidth}
\centering
\begin{tikzpicture}
\begin{axis}[
    xlabel={$r$},
    ylabel={},
    grid=both,
    xmin=2, xmax=18,
    ymin=0, ymax=1,
    width=8cm,
    height=6cm,
   legend pos=south west,
    xmode=log, 
    ymode=log, 
]

\addplot[
    color=blue,
    mark=*,
    mark options={solid},
    ]
    coordinates {
        (2, 0.5556)(3, 0.5000)(4, 0.3319)(5, 0.2929)(6, 0.2159)(7, 0.1910)(8, 0.1502)(9, 0.1340)(10, 0.1100)(11, 0.0990)(12, 0.0838)(13, 0.0761)(14, 0.0659)(15, 0.0603)(16, 0.0552)(17, 0.0539)(18, 0.0508)
    };
\addlegendentry{$n=1$}

\addplot[
    color=red,
    mark=square*,
    mark options={solid},
    ]
    coordinates {
        (2, 0.7647)(3, 0.7500)(4, 0.5295)(5, 0.5000)(6, 0.3720)(7, 0.3455)(8, 0.2706)(9, 0.2500)(10, 0.2037)(11, 0.1883)(12, 0.1581)(13, 0.1464)(14, 0.1259)(15, 0.1170)(16, 0.1024)(17, 0.0955)(18, 0.0877)
    };
\addlegendentry{$n=2$}

\addplot[
    color=green,
    mark=triangle*,
    mark options={solid},
    ]
    coordinates {
        (2, 0.8400)(3, 0.8333)(4, 0.6331)(5, 0.6092)(6, 0.4951)(7, 0.4727)(8, 0.3804)(9, 0.3582)(10, 0.2961)(11, 0.2787)(12, 0.2348)(13, 0.2206)(14, 0.1897)(15, 0.1785)(16, 0.1559)
    };
\addlegendentry{$n=3$}

\addplot[
    color=orange,
    mark=diamond*,
    mark options={solid},
    ]
    coordinates {
        (2, 0.8788)(3, 0.8750)(4, 0.6943)(5, 0.6770)(6, 0.5713)(7, 0.5530)(8, 0.4624)(9, 0.4425)(10, 0.3772)(11, 0.3593)
    };
\addlegendentry{$n=4$}

\end{axis}
\end{tikzpicture}
\caption{$d=2$}
\end{subfigure}
~
\begin{subfigure}[t]{0.45\textwidth}
\centering
\begin{tikzpicture}
\begin{axis}[
    xlabel={$r$},
    ylabel={},
    grid=both,
    xmin=3, xmax=18,
    ymin=0, ymax=1,
    width=8cm,
    height=6cm,
    legend pos=south west,
    xmode=log, 
    ymode=log, 
]

\addplot[
    color=blue,
    mark=*,
    mark options={solid},
    ]
    coordinates {
        (3, 0.6001)(4, 0.5284)(5, 0.5001)(6, 0.3585)(7, 0.3140)(8, 0.2929)(9, 0.2309)(10, 0.2050)(11, 0.1910)(12, 0.1592)(13, 0.1434)(14, 0.1342)(15, 0.1179)(16, 0.1128)(17, 0.1124)(18, 0.1059)
    };
\addlegendentry{$n=1$}

\addplot[
    color=red,
    mark=square*,
    mark options={solid},
    ]
    coordinates {
        (3, 0.8278)(4, 0.7575)(5, 0.7501)(6, 0.5798)(7, 0.5263)(8, 0.5001)(9, 0.4051)(10, 0.3668)(11, 0.3455)(12, 0.2917)(13, 0.2657)(14, 0.2501)(15, 0.2175)(16, 0.1997)(17, 0.1890)(18, 0.1678)
    };
\addlegendentry{$n=2$}

\addplot[
    color=green,
    mark=triangle*,
    mark options={solid},
    ]
    coordinates {
        (3, 0.9026)(4, 0.8500)(5, 0.8334)(6, 0.6967)(7, 0.6532)(8, 0.6132)(9, 0.5456)(10, 0.4996)(11, 0.4751)(12, 0.4129)(13, 0.3823)(14, 0.3598)(15, 0.3201)(16, 0.2968)
    };
\addlegendentry{$n=3$}

\addplot[
    color=orange,
    mark=diamond*,
    mark options={solid},
    ]
    coordinates {
        (3, 0.9385)(4, 0.8919)(5, 0.8751)(6, 0.7679)(7, 0.7320)(8, 0.6876)(9, 0.6390)(10, 0.5893)(11, 0.5641)
    };
\addlegendentry{$n=4$}

\end{axis}
\end{tikzpicture}
\caption{$d=3$}
\end{subfigure}
\caption{The SDP upper bound \eqref{SDP_bound_Theta} on $\Theta_{n,d}^r$ for $d=2,3$. \label{fig:plots}}
\end{figure}

To prove \revv{Theorem \ref{proposition:Theta_SDP_bound}}, we will show that the following two statements are equivalent:
\begin{enumerate}
  \item
  $
  p(\mathbf{x}) := \sum_{\alpha \in \N^n_d}  2^{\omega(\alpha)} p_{{\alpha}}  T_{\alpha}(\mathbf{x})  \in \mathcal{T}(1\pm x_1,\dots,1\pm x_n)_{r}
  $
  \item
  $
  K_r(\mathbf{x},\mathbf{y}) := \sum_{\alpha \in \N^n_d}  p_{{\alpha}} \hat T_{\alpha}(\mathbf{x})\hat T_{\alpha}(\mathbf{y})
   \in \mathcal{T}(1\pm x_1,\dots,1\pm x_n;1\pm y_1,\dots,1\pm y_n)_{r,r}.
  $
\end{enumerate}
Note that item 1 appears in the statement of problem \eqref{SDP_bound_Theta}. Also, item 2 is a sufficient condition for \eqref{kernel_preorder_condition} to hold, again by a similar argument  as
in  \cite[Lemma 9]{Laurent2021}; \rev{ we give a
proof of this fact in Proposition \ref{prop:cubature} below.}

Also note that proving the equivalence of the two conditions is the same as proving the equivalence of  the  following two statements (after re-scaling):
\begin{enumerate}
  \item
  $
  p(\mathbf{x}) :=  \sum_{\alpha \in \N^n_d}   p_{{\alpha}}  T_{\alpha}(\mathbf{x})  \in \mathcal{T}(1\pm x_1,\dots,1\pm x_n)_{r}
  $
  \item
  $
  K_r(\mathbf{x},\mathbf{y}) :=  \sum_{\alpha \in \N^n_d}  p_{{\alpha}} T_{\alpha}(\mathbf{x}) T_{\alpha}(\mathbf{y})
   \in \mathcal{T}(1\pm x_1,\dots,1\pm x_n;1\pm y_1,\dots,1\pm y_n)_{r,r}.
  $
\end{enumerate}

\rev{
For the sake of completeness, we give a proof in Proposition \ref{prop:cubature} below that item 2 is a sufficient condition for \eqref{kernel_preorder_condition} to hold,
using a similar argument as in  \cite[Lemma 9]{Laurent2021}.
The proof relies on a celebrated theorem\footnote{The same result was obtained independently at the same time by Richter \cite{Richter}.} by Tchakaloff \cite{Tchakaloff} concerning the existence of cubature rules for numerical integration. \revv{(Modern expositions and generalizations
of Tchakaloff's theorem are given in \cite{Putinar cubature} and \cite{diDio}.)}
Recall that a positive cubature rule of degree $d$ for numerical integration with respect to a measure $\mu$ over a compact set $\K$
is defined by a set of $N$ nodes $\y^{(\ell)}\in \K$  and weights $w_\ell \ge 0$ ($\ell \in [N]$) with the property that
\[
\int_K f(\y)d\mu(\y) = \sum_{\ell=1}^N w_\ell f(\y^{(\ell)}), \quad \forall f \in \mathbb{R}[\y]_d.
\]

\begin{theorem}[Tchakaloff \cite{Tchakaloff}]
  For any compact set $\K \subset \mathbb{R}^n$, and finite, positive Borel measure supported on $\K$, there exists a positive
  cubature rule of degree $d$ with $N = {n+d \choose d}$ nodes.
\end{theorem}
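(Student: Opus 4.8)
The plan is to give the classical Carathéodory-type argument (modern expositions appear in the references \cite{Putinar cubature,diDio} already cited above). Fix the monomial basis $\{\y^\alpha : \alpha \in \N^n_d\}$ of $\R[\y]_d$, which has exactly $N := {n+d \choose d}$ elements, and define the moment map $\Phi : \K \to \R^N$ by $\Phi(\y) := (\y^\alpha)_{\alpha \in \N^n_d}$. Exhibiting a positive cubature rule of degree $d$ with $N$ nodes is the same as writing the moment vector $m := \int_\K \Phi(\y)\,\mathrm{d}\mu(\y) \in \R^N$ as a nonnegative combination $m = \sum_{\ell=1}^N w_\ell\,\Phi(\y^{(\ell)})$ with $w_\ell \ge 0$ and $\y^{(\ell)} \in \K$: indeed, for any $f = \sum_{\alpha \in \N^n_d} c_\alpha \y^\alpha$ one then has $\int_\K f\,\mathrm{d}\mu = \sum_\alpha c_\alpha m_\alpha = \sum_{\ell=1}^N w_\ell\, f(\y^{(\ell)})$ by linearity. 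So it suffices to show (a) that $m$ lies in the convex cone generated by $\Phi(\K)$, and (b) that any point of that cone is a nonnegative combination of at most $N$ points of $\Phi(\K)$.

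For (a), first dispose of the trivial case $\mu(\K) = 0$, where $m = 0$ and all weights may be taken to be zero. Otherwise set $\nu := \mu/\mu(\K)$, a Borel probability measure on $\K$, and $\bar p := \int_\K \Phi(\y)\,\mathrm{d}\nu(\y) = m/\mu(\K)$. I would show $\bar p \in \operatorname{conv}(\Phi(\K))$. Since $\K$ is compact and $\Phi$ is continuous, $\Phi(\K) \subset \R^N$ is compact, and hence $\operatorname{conv}(\Phi(\K))$ is compact too (it is the continuous image of the compact set $\Delta_N \times \Phi(\K)^{N+1}$ under $(\lambda, z_0, \dots, z_N) \mapsto \sum_i \lambda_i z_i$, using Carathéodory's theorem), in particular closed. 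If $\bar p \notin \operatorname{conv}(\Phi(\K))$, the separating hyperplane theorem provides $c \in \R^N$ and $\gamma \in \R$ with $c^\top \Phi(\y) \le \gamma$ for all $\y \in \K$ but $c^\top \bar p > \gamma$; integrating the first inequality against $\nu$ yields $c^\top \bar p = \int_\K c^\top \Phi(\y)\,\mathrm{d}\nu(\y) \le \gamma$, a contradiction. Hence $\bar p \in \operatorname{conv}(\Phi(\K))$, and $m = \mu(\K)\,\bar p$ lies in the cone generated by $\Phi(\K)$.

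For (b), by Carathéodory's theorem $\bar p$ is a convex combination of at most $N+1$ points of $\Phi(\K)$; to sharpen this to $N$ points, note that the constant polynomial $1 = \y^{\mathbf 0}$ belongs to the chosen basis, so the coordinate of $\Phi(\y)$ indexed by $\alpha = \mathbf 0$ is identically $1$. Thus $\Phi(\K)$ lies in the affine hyperplane $\{z \in \R^N : z_{\mathbf 0} = 1\}$, an affine subspace of dimension $N-1$, and Carathéodory applied within that subspace gives $\bar p = \sum_{\ell=1}^N \theta_\ell\,\Phi(\y^{(\ell)})$ with $\theta_\ell \ge 0$, $\sum_\ell \theta_\ell = 1$, $\y^{(\ell)} \in \K$. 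Setting $w_\ell := \mu(\K)\,\theta_\ell$ produces the required rule with $N = {n+d \choose d}$ nodes. (Alternatively one can invoke the conic version of Carathéodory, which already yields $N$ generators for a point of a cone in $\R^N$.) The only genuinely non-routine point is the passage in (a) from ``$m$ is a moment of $\mu$'' to ``$\bar p$ lies in the \emph{closed} convex hull of $\Phi(\K)$'': this relies on the compactness (hence closedness) of $\operatorname{conv}(\Phi(\K))$ so that the separation argument is legitimate; the remaining steps are bookkeeping.
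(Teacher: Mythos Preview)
Your argument is correct and is precisely the classical Carath\'eodory--separation proof one finds in the modern expositions the paper already points to (e.g.\ \cite{Putinar cubature,diDio}). Note, however, that the paper itself does \emph{not} prove Tchakaloff's theorem: it is quoted as a known result and used as a black box in the proof of Proposition~\ref{prop:cubature}. So there is no ``paper's own proof'' to compare against; you have simply supplied the standard proof of the cited theorem.
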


One may use the Tchakaloff theorem to show the following.
\begin{proposition}
\label{prop:cubature}
Assume $K:[-1,1]^n \times [-1,1]^n \rightarrow \mathbb{R}$ and
\[
(\x,\y) \mapsto K(\x,\y) \in \mathcal{T}(1\pm x_1,\dots,1\pm x_n;1\pm y_1,\dots,1\pm y_n)_{r,r}.
\]
Then, for any $f \in \mathbb{R}[\x]_d$ that is nonnegative on $[-1,1]^n$, one has
\[
\x \mapsto \int_{[-1,1]^n} f(\y)K(\x,\y)d\mu(\y) \in \mathcal{T}(1\pm x_1,\dots,1\pm x_n)_{r},
\]
where $\mu$ is defined in \eqref{mu cheb}.
\end{proposition}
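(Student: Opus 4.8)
The plan is to discretize the integral by a positive cubature rule for the Chebyshev measure $\mu$, whose existence is guaranteed by Tchakaloff's theorem, and thereby reduce the claim to the elementary fact that specializing the second argument of $K$ to a point of $[-1,1]^n$ produces an element of $\mathcal{T}(1\pm x_1,\ldots,1\pm x_n)_r$.

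First I would note that, since $K$ lies in $\mathcal{T}(1\pm x_1,\ldots,1\pm x_n;1\pm y_1,\ldots,1\pm y_n)_{r,r}$, it is a polynomial of degree at most $r$ in $\y$; hence, for each fixed $\x$, the integrand $\y\mapsto f(\y)K(\x,\y)$ is a polynomial in $\y$ of degree at most $d+r$. I then apply Tchakaloff's theorem to the compact set $[-1,1]^n$ with the finite positive Borel measure $\mu$ and degree $D:=d+r$: there are nodes $\y^{(1)},\dots,\y^{(N)}\in[-1,1]^n$ and weights $w_1,\dots,w_N\ge 0$ (with $N=\binom{n+d+r}{d+r}$) such that $\int_{[-1,1]^n} g(\y)\,d\mu(\y)=\sum_{\ell=1}^N w_\ell\, g(\y^{(\ell)})$ for every $g\in\mathbb{R}[\y]_{d+r}$. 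Taking $g(\y)=f(\y)K(\x,\y)$ for fixed $\x$ gives
\[
\int_{[-1,1]^n} f(\y)K(\x,\y)\,d\mu(\y) \;=\; \sum_{\ell=1}^N w_\ell\, f(\y^{(\ell)})\, K(\x,\y^{(\ell)}),
\]
an identity between polynomials in $\x$, in which every coefficient $w_\ell f(\y^{(\ell)})$ is nonnegative because $w_\ell\ge 0$ and $f\ge 0$ on $[-1,1]^n$.

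It then remains to show that $K(\x,\y^{(\ell)})\in\mathcal{T}(1\pm x_1,\ldots,1\pm x_n)_r$ for each $\ell$; as the truncated pre-ordering is a convex cone stable under addition, the displayed conic combination will then lie in $\mathcal{T}(1\pm x_1,\ldots,1\pm x_n)_r$, which is the conclusion. To establish the membership, I write $K$ using the definition of the bivariate truncated pre-ordering as a finite sum of terms
\[
\sigma_{I^-_\x,I^+_\x,I^-_\y,I^+_\y}(\x,\y)\prod_{i\in I^-_\x}(1-x_i)\prod_{i\in I^+_\x}(1+x_i)\prod_{i\in I^-_\y}(1-y_i)\prod_{i\in I^+_\y}(1+y_i),
\]
where $\sigma_{\cdots}\in\Sigma[\x,\y]$ and $|I^-_\x|+|I^+_\x|+\deg_{\x}\sigma_{\cdots}\le r$. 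Setting $\y=\y^{(\ell)}$: the products $\prod_{i\in I^-_\y}(1-y^{(\ell)}_i)\prod_{i\in I^+_\y}(1+y^{(\ell)}_i)$ are nonnegative real scalars since $\y^{(\ell)}\in[-1,1]^n$; a sum of squares in $(\x,\y)$ restricted to $\y=\y^{(\ell)}$ is a sum of squares in $\x$ alone, of $\x$-degree at most $\deg_{\x}\sigma_{\cdots}$; and a nonnegative scalar times a sum of squares is again a sum of squares. Hence each specialized term has the form $\tilde\sigma(\x)\prod_{i\in I^-_\x}(1-x_i)\prod_{i\in I^+_\x}(1+x_i)$ with $\tilde\sigma\in\Sigma[\x]$ of degree at most $r-|I^-_\x|-|I^+_\x|$, i.e.\ a generating term of $\mathcal{T}(1\pm x_1,\ldots,1\pm x_n)_r$; summing over the finitely many index tuples yields $K(\x,\y^{(\ell)})\in\mathcal{T}(1\pm x_1,\ldots,1\pm x_n)_r$.

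The argument is essentially bookkeeping once Tchakaloff's theorem is invoked, and needs no analytic estimates; the measure $\mu$ enters only through being a finite positive Borel measure on the compact set $[-1,1]^n$. The one point requiring care is the degree accounting: the cubature rule must be exact up to degree $d+r$ rather than merely $r$, because the integrand carries the extra factor $f$, and one must confirm that the substitution $\y\mapsto\y^{(\ell)}$ leaves the $\x$-degree of each sum-of-squares block unchanged, so that membership in the order-$r$ pre-ordering is preserved.
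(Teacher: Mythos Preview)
Your proof is correct and follows exactly the same approach as the paper's own proof: invoke Tchakaloff's theorem to replace the integral by a positive cubature sum, then use that specializing $\y$ to a node in $[-1,1]^n$ sends $K(\x,\y)$ into $\mathcal{T}(1\pm x_1,\ldots,1\pm x_n)_r$. Your version is in fact slightly more careful than the paper's: you correctly take the cubature rule to be exact up to degree $d+r$ (since the integrand $f(\y)K(\x,\y)$ has $\y$-degree $d+r$), whereas the paper writes $N=\binom{n+d}{d}$, and you spell out explicitly why the specialization preserves membership in the truncated pre-ordering, which the paper merely asserts.
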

\begin{proof}
By Tchakaloff's theorem there exist
$\y^{(\ell)}\in [-1,1]^n$  and the weights $w_\ell \ge 0$ ($\ell \in [N]$) with $N = {n+d \choose d}$ such that
\[
\int_{[-1,1]^n} f(\y)K(\x,\y)d\mu(\y) = \sum_{\ell=1}^N w_\ell K(\x,\y^{(\ell)}) f(\y^{(\ell)}).
\]
Using $f(\y^{(\ell)}) \ge 0$ for all $\ell \in [N]$, as well as
\[
\x \mapsto K(\x,\y^{(\ell)}) \in \mathcal{T}(1\pm x_1,\dots,1\pm x_n)_{r} \quad \forall \ell \in [N],
\]
yields  the required result.
\end{proof}
}

For clarity of exposition, we will first prove the equivalence of the two statements in the univariate case, before proceeding to the multivariate case.

\subsection{Univariate case}
First, we work out the univariate case.
\begin{lemma}\label{lem.id}
    For any $k \ge 0$, and $x,y \in [-1,1]$, one has
    \[T_k(x)T_k(y) = \tfrac 12\left(T_k(xy + \sqrt{(1-x^2)(1-y^2)}) + T_k(xy - \sqrt{(1-x^2)(1-y^2)})\right).\]
\end{lemma}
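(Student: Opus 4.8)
The plan is to prove the identity $T_k(x)T_k(y) = \tfrac 12(T_k(xy + \sqrt{(1-x^2)(1-y^2)}) + T_k(xy - \sqrt{(1-x^2)(1-y^2)}))$ by passing to the trigonometric parametrization of Chebyshev polynomials. First I would write $x = \cos\theta$ and $y = \cos\phi$ for $\theta,\phi \in [0,\pi]$, which is always possible since $x,y \in [-1,1]$. Then $\sqrt{1-x^2} = \sin\theta$ and $\sqrt{1-y^2} = \sin\phi$ (both nonnegative on this range), so that
\[
xy + \sqrt{(1-x^2)(1-y^2)} = \cos\theta\cos\phi + \sin\theta\sin\phi = \cos(\theta - \phi),
\]
and similarly $xy - \sqrt{(1-x^2)(1-y^2)} = \cos(\theta + \phi)$. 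These are exactly the angle-addition formulas for cosine, so both arguments on the right-hand side lie in $[-1,1]$ and are themselves values of $\cos$ at real angles.

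Next I would apply the defining relation $T_k(\cos\psi) = \cos(k\psi)$ to each term. This gives $T_k(\cos(\theta-\phi)) = \cos(k(\theta-\phi))$ and $T_k(\cos(\theta+\phi)) = \cos(k(\theta+\phi))$, so the right-hand side becomes $\tfrac 12(\cos(k\theta - k\phi) + \cos(k\theta + k\phi))$. By the product-to-sum identity $\cos A\cos B = \tfrac 12(\cos(A-B) + \cos(A+B))$ with $A = k\theta$, $B = k\phi$, this equals $\cos(k\theta)\cos(k\phi) = T_k(x)T_k(y)$, which is precisely the left-hand side. This closes the argument.

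One small point worth addressing explicitly: since both sides of the claimed identity are polynomial expressions in $x$, $y$, and $\sqrt{(1-x^2)(1-y^2)}$, and we have verified the equality for all $x,y \in [-1,1]$, the identity holds as stated on the whole square $[-1,1]^2$; no analytic continuation or density argument beyond the parametrization is needed, because every point of $[-1,1]^2$ is hit by some $(\theta,\phi) \in [0,\pi]^2$. I do not anticipate a genuine obstacle here — the only thing to be careful about is the sign of the square roots, which is why restricting $\theta,\phi$ to $[0,\pi]$ (where $\sin$ is nonnegative) matters; outside that range the formula would still hold but the bookkeeping of signs in $\sqrt{1-x^2} = |\sin\theta|$ would need comment. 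Since the lemma only claims the identity for $x,y \in [-1,1]$, the clean parametrization suffices.
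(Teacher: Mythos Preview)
Your proof is correct and follows essentially the same approach as the paper: parametrize $x=\cos\theta$, $y=\cos\phi$ with $\theta,\phi\in[0,\pi]$, identify the arguments $xy\pm\sqrt{(1-x^2)(1-y^2)}$ as $\cos(\theta\mp\phi)$ via the angle-addition formulas, apply $T_k(\cos\psi)=\cos(k\psi)$, and finish with the product-to-sum identity. Your added remarks on the sign of the square roots and on the parametrization covering all of $[-1,1]^2$ are welcome clarifications but do not change the argument.
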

\begin{proof}
    Let $x_1,x_2 \in [-1,1]$. Then there are $\theta_1,\theta_2 \in [0,\pi]$ such that $x_i=\cos(\theta_i)$. Then $\sin(\theta_i) \ge 0$ and thus $\sin(\theta_i) = \sqrt{1-\cos(\theta_i)^2} = \sqrt{1-x_i^2}$. Thus
    \[x_1x_2 \pm \sqrt{(1-x_1^2)(1-x_2^2)} = \cos \theta_1 \cos \theta_2 \pm \sin \theta_1 \sin\theta_2 = \cos(\theta_1 \mp \theta_2).\]
    Using $T_k(\cos(\theta)) = \cos(k \theta)$ we have
    \begin{eqnarray*}
    T_k\left(x_1x_2 + \sqrt{(1-x_1^2)(1-x_2^2)}\right)  + T_k\left(x_1x_2 - \sqrt{(1-x_1^2)(1-x_2^2)}\right)
    &=&T_k(\cos(\theta_1 - \theta_2)) + T_k(\cos(\theta_1 + \theta_2)) \\
    &=& \cos(k\theta_1 - k\theta_2) + \cos(k\theta_1 + k\theta_2) \\
    &=& \cos(k \theta_1)\cos(k \theta_2) \\
    &=&     T_k(x_1)T_k(x_2).
    \end{eqnarray*}
    \end{proof}

Now given $p \in \R[t]$  define
\[K_p(x,y) := \tfrac 12\left(p(xy + \sqrt{(1-x^2)(1-y^2)}) + p(xy - \sqrt{(1-x^2)(1-y^2)})\right).\]
From Lemma~\ref{lem.id} we obtain that if $p(t) = \sum_{k\ge 0} p_k T_k(t)$ then $K_p(x,y) = \sum_{k\ge 0} p_kT_k(x)T_k(y)$. Moreover, every kernel is of this form as
given a kernel $K(x,y) = \sum_{k \ge 0}c_k T_k(x)T_k(y)$, defining $p_K(t) = K(t,1)$ we have that \rev{$p_K(t) = \sum_{k\ge 0}c_kT_k(t)$},
since $T_k(1) = 1$ for all $k$.

We have that $|xy \pm \sqrt{(1-x^2)(1-y^2)}| \le 1$ when $|x|,|y| \le 1$.  Thus, $p$ is nonnegative on $[-1,1]$ if and only if  $K_p$ is non-negative on $[-1,1]^2$. Now, $p$ is non-negative on $[-1,1]$ if, and only if, $p \in \mathcal{T}(1\pm t)_{\deg p}$, \rev{by Theorem \ref{THM:MarkovLukacz}}. Next, we show that we also have $K_p \in \mathcal{T}(1\pm x;1 \pm y)_{\deg p, \deg p}$ when $p$ nonnegative.
To this end, we first prove the following lemma.
\begin{lemma}\label{lem:inPre} Let $d \ge 0$. Let $q \in \R[t]_d$.
\begin{enumerate}
    \item[1.] $K_{q(t)^2} \in \mathcal{T}(1\pm x;1 \pm y)_{2d,2d}$.
    \item[2.] $K_{(1 \pm t)q(t)^2} \in \mathcal{T}(1\pm x;1 \pm y)_{2d+1,2d+1}$.
    \item[3.] $K_{(1 - t^2)q(t)^2} \in \mathcal{T}(1\pm x;1 \pm y)_{2d+2,2d+2}$.
 \end{enumerate}
\end{lemma}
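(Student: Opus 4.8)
\emph{(Proof plan.)} The plan is to work throughout with the representation $K_p(x,y)=\tfrac12\big(p(u)+p(v)\big)$ already isolated in Lemma~\ref{lem.id}, where $u=xy+w$, $v=xy-w$ and $w=\sqrt{(1-x^2)(1-y^2)}$, so that $u+v=2xy$, $(u-v)^2=4(1-x^2)(1-y^2)$, and $u,v\in[-1,1]$ for $x,y\in[-1,1]$. First I would introduce the auxiliary polynomials
\[
G(x,y):=q(u)+q(v),\qquad H(x,y):=\frac{q(u)-q(v)}{u-v}
\]
and show they are polynomials in $x,y$ with $\deg_x G,\deg_y G\le d$ and $\deg_x H,\deg_y H\le d-1$: writing $q=\sum_{j=0}^d q_j T_j$, Lemma~\ref{lem.id} gives $T_j(u)+T_j(v)=2T_j(x)T_j(y)$, hence $G=2\sum_{j=0}^d q_j T_j(x)T_j(y)$, and the analogous trigonometric computation (using $\sin j\theta=\sin\theta\,U_{j-1}(\cos\theta)$ with $U_{j-1}$ the degree-$(j-1)$ Chebyshev polynomial of the second kind) yields $T_j(u)-T_j(v)=(u-v)\,U_{j-1}(x)U_{j-1}(y)$, hence $H=\sum_{j=1}^d q_j U_{j-1}(x)U_{j-1}(y)$. (These are polynomial identities, first verified on $[-1,1]^2$.) Since $q(u)=\tfrac12\big(G+(u-v)H\big)$ and $q(v)=\tfrac12\big(G-(u-v)H\big)$, one gets, writing $W:=(1-x^2)(1-y^2)$,
\[
q(u)^2+q(v)^2=\tfrac12G^2+2WH^2,\qquad q(u)^2-q(v)^2=(u-v)GH .
\]

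Part~1 is then immediate: $2K_{q(t)^2}=q(u)^2+q(v)^2=\tfrac12G^2+2WH^2$, i.e.\ $K_{q(t)^2}=\tfrac14G^2+(1-x)(1+x)(1-y)(1+y)H^2$, which is a square (degree $\le2d$ in each variable) plus the product of all four generators times $H^2$ (degree $\le2d-2$ in each variable), hence an element of $\mathcal{T}(1\pm x;1\pm y)_{2d,2d}$. For part~2 I would expand $(1\mp u)q(u)^2+(1\mp v)q(v)^2$ using $1\mp u=(1\mp xy)\mp w$, $1\mp v=(1\mp xy)\pm w$ and the two identities above to obtain $2K_{(1-t)q(t)^2}=\tfrac{1-xy}{2}G^2-2WGH+2(1-xy)WH^2$. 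The decisive step is to complete this square compatibly with the generators: using $1-xy=\tfrac12\big[(1-x)(1+y)+(1+x)(1-y)\big]$, one checks the polynomial identity
\[
\tfrac{1-xy}{2}G^2-2WGH+2(1-xy)WH^2=\tfrac14(1-x)(1+y)\big(G-2(1+x)(1-y)H\big)^2+\tfrac14(1+x)(1-y)\big(G-2(1-x)(1+y)H\big)^2,
\]
and symmetrically $2K_{(1+t)q(t)^2}=\tfrac14(1+x)(1+y)\big(G+2(1-x)(1-y)H\big)^2+\tfrac14(1-x)(1-y)\big(G+2(1+x)(1+y)H\big)^2$; both are verified by matching the coefficients of $G^2$, $GH$, $H^2$. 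Since $(1\pm x)(1\mp y)H$ has degree $\le d$ in each variable, each bracketed square has degree $\le2d$ in each variable, so every summand is a product of two generators times a square of degree $\le2d$, giving $K_{(1\pm t)q(t)^2}\in\mathcal{T}(1\pm x;1\pm y)_{2d+1,2d+1}$.

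Part~3 follows the same pattern: from $1-u^2=x^2(1-y^2)+y^2(1-x^2)-2xyw$ and $1-v^2=x^2(1-y^2)+y^2(1-x^2)+2xyw$ one gets $2K_{(1-t^2)q(t)^2}=\tfrac12\big(x^2(1-y^2)+y^2(1-x^2)\big)G^2-4xyWGH+2\big(x^2(1-y^2)+y^2(1-x^2)\big)WH^2$, and one verifies this equals
\[
\tfrac12(1-y^2)\big(xG-2(1-x^2)yH\big)^2+\tfrac12(1-x^2)\big(yG-2x(1-y^2)H\big)^2 .
\]
Here $xG-2(1-x^2)yH$ has degree $\le d+1$ in $x$ and $\le d$ in $y$, so the first summand is $(1-y)(1+y)$ times a square of degree $\le2d+2$ in $x$ and $\le2d$ in $y$, hence of degree $\le2d+2$ in each variable; by the $x\leftrightarrow y$ symmetry the second summand behaves the same way, so $K_{(1-t^2)q(t)^2}\in\mathcal{T}(1\pm x;1\pm y)_{2d+2,2d+2}$. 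The one place requiring insight is the square completion in parts~2 and~3: the naive decomposition coming from $1-u=\tfrac12\big((x-y)^2+(\sqrt{1-x^2}-\sqrt{1-y^2})^2\big)$ would produce summands of degree $2d+2$ in each variable (their leading parts cancelling only after summation), which is too weak; the identities above are engineered so that each generator factor absorbs exactly one unit of degree in each variable while leaving a square of degree exactly $2d$. Everything else — the degree bookkeeping and the coefficient matching — is routine.
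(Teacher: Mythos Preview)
Your proof is correct and is essentially the same argument as the paper's, up to a change of notation: the paper writes $q(xy\pm w)=q_0\pm w q_1$ with $q_0\in\R[x,y]_{d,d}$, $q_1\in\R[x,y]_{d-1,d-1}$, which in your variables is exactly $q_0=\tfrac12 G$ and $q_1=H$, and the square-completion identities you found for parts~2 and~3 coincide with the paper's (the paper also sets $A=(1-x)(1+y)$, $B=(1+x)(1-y)$ and writes $2K_{(1-t)q^2}=A(q_0-Bq_1)^2+B(q_0-Aq_1)^2$, etc.). The one place your write-up adds something is the explicit Chebyshev expansion $H=\sum_{j\ge1}q_jU_{j-1}(x)U_{j-1}(y)$ via the identity $T_j(u)-T_j(v)=(u-v)U_{j-1}(x)U_{j-1}(y)$; the paper simply asserts the degree bounds on $q_0,q_1$ ``by expanding and collecting terms''.
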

\begin{proof} First notice that by expanding and collecting terms, we can write
\[q(xy + \sqrt{(1-x^2)(1-y^2)}) = q_0(x,y) + \sqrt{(1-x^2)(1-y^2)} q_1(x,y)\]
and
\[q(xy - \sqrt{(1-x^2)(1-y^2)}) = q_0(x,y) - \sqrt{(1-x^2)(1-y^2)} q_1(x,y)\]
with $q_0 \in \R[x,y]_{d,d}$ and $q_1 \in \R[x,y]_{d-1,d-1}$.

\medskip
We then have
\begin{equation}\label{eq:SOStrans}
q(xy \pm \sqrt{(1-x^2)(1-y^2)})^2 = q_0^2 + (1-x^2)(1-y^2)q_1^2 \pm 2\sqrt{(1-x^2)(1-y^2)}q_0q_1.
\end{equation}

From~\eqref{eq:SOStrans}
 $K_{q^2} = q_0^2 + (1-x^2)(1-y^2)q_1^2$ follows, proving 1. Now, to prove 2, we write
\begin{align*}
2K_{(1 - t)q(t)^2}
&:= (1- xy - \sqrt{(1-x^2)(1-y^2)})q(xy + \sqrt{(1-x^2)(1-y^2)})^2 \\
&\qquad +(1- xy + \sqrt{(1-x^2)(1-y^2)})q(xy -\sqrt{(1-x^2)(1-y^2)})^2\\
&= (1- xy - \sqrt{(1-x^2)(1-y^2)})(q_0^2 + (1-x^2)(1-y^2)q_1^2 + 2\sqrt{(1-x^2)(1-y^2)}q_0q_1) \\
&\qquad+ (1- xy + \sqrt{(1-x^2)(1-y^2)})(q_0^2 + (1-x^2)(1-y^2)q_1^2 -2\sqrt{(1-x^2)(1-y^2)}q_0q_1),
\end{align*}
\rev{where the second equality follows from~\eqref{eq:SOStrans}. }

\rev{Using $(a-b)(c+d) + (a+b)(c-d) = 2ac - 2bd$ we obtain,
\begin{align*}
2K_{(1 - t)q(t)^2}
&= 2(1- xy)(q_0^2 + (1-x^2)(1-y^2)q_1^2) - 4(1-x^2)(1-y^2)q_0q_1.\\
\end{align*}
Let $A = (1-x)(1+y)$ and $B = (1+x)(1-y)$. We have then $A+B = 2(1- xy)$ and $AB = (1-x^2)(1-y^2)$. Therefore,
\begin{align*}
2K_{(1 - t)q(t)^2}
&= (A+B)(q_0^2 + AB q_1^2) - 4ABq_0q_1 = A(q_0-Bq_1)^2 + B(q_0-Aq_1)^2 \\
&= (1-x)(1+y)(q_0 - (1+x)(1-y)q_1)^2 + (1+x)(1-y)(q_0 - (1-x)(1+y)q_1)^2.
\end{align*}
}
Similarly,
\[2K_{(1 + t)q(t)^2} =(1+x)(1+y)(q_0 + (1-x)(1-y) q_1)^2 + (1-x)(1-y)(q_0 + (1+x)(1+y)q_1)^2.
\]
To prove 3, we write,
\begin{align*}
2K_{(1 - t^2)q(t)^2}
&:= \left(1- (xy + \sqrt{(1-x^2)(1-y^2)})^2\right)q(xy + \sqrt{(1-x^2)(1-y^2)})^2 \\
&\qquad +\left(1- (xy - \sqrt{(1-x^2)(1-y^2)})^2\right)q(xy -\sqrt{(1-x^2)(1-y^2)})^2\\
&= \left(1- (xy)^2  - (1-x^2)(1-y^2)  - 2xy\sqrt{(1-x^2)(1-y^2)}\right) \\
&\qquad\qquad \cdot \left(q_0^2 + (1-x^2)(1-y^2)q_1^2 + 2\sqrt{(1-x^2)(1-y^2)}q_0q_1\right) \\
&\qquad+ \left(1- (xy)^2  -  (1-x^2)(1-y^2)  + 2xy\sqrt{(1-x^2)(1-y^2)}\right) \\
&\qquad\qquad\qquad \cdot \left(q_0^2 + (1-x^2)(1-y^2)q_1^2 -2\sqrt{(1-x^2)(1-y^2)}q_0q_1\right),
\end{align*}
\rev{where we have used~\eqref{eq:SOStrans} to obtain the last equality.

 Using $(a-b)(c+d) + (a+b)(c-d) = 2ac - 2bd$ again, we obtain,
\begin{align*}
2K_{(1 - t^2)q(t)^2}
&= 2\left(1- (xy)^2  - (1-x^2)(1-y^2) \right) (q_0^2 + (1-x^2)(1-y^2)q_1^2) - 8xy(1-x^2)(1-y^2)q_0q_1.\\
\end{align*}
Now, let $A = (1-x^2)$ and $B = (1-y^2)$ we have then $(1-x^2)(1-y^2) = AB$ and $1- (xy)^2 = AB + x^2B + y^2A$. Therefore,
\begin{align*}
K_{(1 - t^2)q(t)^2} &= (x^2B + y^2A)(q_0^2 + ABq_1^2) - 4xyABq_0q_1 \\
& = B(xq_0 - y Aq_1)^2 +  A(yq_0 - xBq_1)^2 \\
& = (1-y^2)(xq_0 - y (1-x^2)q_1)^2 +  (1-x^2)(yq_0 - x(1-y^2)q_1)^2.
\end{align*}
}
\end{proof}
Now we are ready to prove our main result in this section.
\begin{theorem}\label{Thm:univariate} Let $p \in \mathcal{T}(1\pm t)_r$. Then $K_p \in \mathcal{T}(1\pm x;1\pm y)_{r,r}$.
\end{theorem}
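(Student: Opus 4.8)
The plan is to reduce the statement to Lemma~\ref{lem:inPre} by exploiting the linearity of the map $p \mapsto K_p$. First I would record two elementary facts: that $p \mapsto K_p$ is linear, which is immediate from the defining formula $K_p(x,y) = \tfrac12\bigl(p(u_+)+p(u_-)\bigr)$ with $u_\pm := xy \pm \sqrt{(1-x^2)(1-y^2)}$; and that $\mathcal{T}(1\pm x;1\pm y)_{r,r}$ is a convex cone, in particular closed under addition. Consequently it is enough to show that $K$ maps each elementary summand occurring in a degree-$r$ pre-ordering certificate for $p$ into $\mathcal{T}(1\pm x;1\pm y)_{r,r}$.

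Next I would unwind the definition of $\mathcal{T}(1\pm t)_r = \mathcal{T}(1+t,1-t)_r$: there exist sums of squares $\sigma_0,\sigma_1,\sigma_2,\sigma_3 \in \Sigma[t]$ with $p = \sigma_0 + \sigma_1(1+t) + \sigma_2(1-t) + \sigma_3(1-t^2)$ and $\deg\sigma_0 \le r$, $\deg(\sigma_1(1+t)) \le r$, $\deg(\sigma_2(1-t)) \le r$, $\deg(\sigma_3(1-t^2)) \le r$. Writing each $\sigma_i$ as a finite sum of squares $q_{ij}^2$ and using linearity of $K$ once more, the problem collapses to four types of building block: $K_{q^2}$, $K_{(1+t)q^2}$, $K_{(1-t)q^2}$ and $K_{(1-t^2)q^2}$ for a single polynomial $q$ of degree $d$. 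By Lemma~\ref{lem:inPre} (parts 1, 2, 2, 3 respectively) these belong to $\mathcal{T}(1\pm x;1\pm y)_{2d,2d}$, to $\mathcal{T}(1\pm x;1\pm y)_{2d+1,2d+1}$ (twice), and to $\mathcal{T}(1\pm x;1\pm y)_{2d+2,2d+2}$. Finally I would observe that the constraint that the matching term of $p$ has degree at most $r$ forces, respectively, $2d \le r$, $2d+1 \le r$, $2d+1 \le r$ and $2d+2 \le r$, so each block in fact lies in $\mathcal{T}(1\pm x;1\pm y)_{r,r}$ by the trivial monotonicity $\mathcal{T}(1\pm x;1\pm y)_{r',r'} \subseteq \mathcal{T}(1\pm x;1\pm y)_{r,r}$ for $r' \le r$; adding up the blocks yields $K_p \in \mathcal{T}(1\pm x;1\pm y)_{r,r}$.

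I do not expect a genuine obstacle, since everything substantial is already contained in Lemma~\ref{lem:inPre}; the single point that needs care is the degree bookkeeping done \emph{separately in the variables $x$ and $y$}, because $\mathcal{T}(1\pm x;1\pm y)_{r,r}$ bounds the $x$-degree and the $y$-degree of each term independently. Here one uses that in the explicit sum-of-squares formulas established in the proof of Lemma~\ref{lem:inPre} the auxiliary polynomials satisfy $q_0 \in \R[x,y]_{d,d}$ and $q_1 \in \R[x,y]_{d-1,d-1}$, which makes the $x$-degree (and likewise the $y$-degree) of $K_{q^2}$, $K_{(1\pm t)q^2}$ and $K_{(1-t^2)q^2}$ at most $2d$, $2d+1$ and $2d+2$ — exactly the total degree of the corresponding term of $p$ — so that none of the degree bounds is ever exceeded.
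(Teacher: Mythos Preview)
Your proposal is correct and follows essentially the same approach as the paper: write $p$ as $\sigma_0 + (1-t)\sigma_1 + (1+t)\sigma_2 + (1-t^2)\sigma_3$ with the appropriate degree constraints, then invoke the linearity of $p\mapsto K_p$ and Lemma~\ref{lem:inPre}. Your version is in fact more explicit than the paper's two-line proof, in that you spell out the reduction from a sum-of-squares $\sigma_i$ to a single square $q^2$ (as required by the statement of Lemma~\ref{lem:inPre}) and you make the degree bookkeeping in each variable separately explicit.
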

\begin{proof}
We have $p =\sigma_0 +  (1-t)\sigma_1 + (1+t)\sigma_2 + (1-t^2)\sigma_3$ where $\sigma_i \in \Sigma[t]$ and $\deg(\sigma_0), \deg(\sigma_1)+1, \deg(\sigma_2)+1, \deg(\sigma_3)+2 \le r$. By the linearity of the operator $p \mapsto K_p$ and Lemma~\ref{lem:inPre}, we obtain $K_p \in \mathcal{T}(1\pm x,1\pm y)_{r,r}$.
\end{proof}

\begin{corollary}
  Let $K(x,y) = \sum_{k=0}^r c_kT_k(x)T_k(y)$ be a nonnegative kernel. Then $K \in \mathcal{T}(1\pm x,1\pm y)_{r,r}$.
\end{corollary}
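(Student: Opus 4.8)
The plan is to derive this corollary as a direct consequence of Theorem~\ref{Thm:univariate} together with the Fekete--Markov--Luk\'acz result (Theorem~\ref{THM:MarkovLukacz}) and the identity relating a kernel to a univariate polynomial. First I would observe that the hypothesis is that $K(x,y) = \sum_{k=0}^r c_k T_k(x)T_k(y)$ is nonnegative on $[-1,1]^2$. Set $p_K(t) := K(t,1)$; since $T_k(1)=1$ for all $k$, we get $p_K(t) = \sum_{k=0}^r c_k T_k(t)$, a univariate polynomial of degree at most $r$. The key point, already established in the discussion preceding Lemma~\ref{lem:inPre}, is that $K = K_{p_K}$ in the notation of that section (because $K_{p_K}(x,y) = \sum_k c_k T_k(x)T_k(y)$ by Lemma~\ref{lem.id}), and moreover that $p_K$ is nonnegative on $[-1,1]$ if and only if $K_{p_K}$ is nonnegative on $[-1,1]^2$ --- this last equivalence uses the fact that $xy \pm \sqrt{(1-x^2)(1-y^2)}$ always lies in $[-1,1]$ when $x,y \in [-1,1]$, so that every value of $K_{p_K}$ is an average of two values of $p_K$ on $[-1,1]$, and conversely every value $p_K(t)$, $t \in [-1,1]$, equals $K_{p_K}(t,1)$.

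Given that $K$ is nonnegative on $[-1,1]^2$, the equivalence yields that $p_K$ is nonnegative on $[-1,1]$. Now I would invoke Theorem~\ref{THM:MarkovLukacz}: a univariate polynomial of degree $\le r$ that is nonnegative on $[-1,1]$ lies in the truncated preordering $\mathcal{T}(1\pm t)_r$. (One should note that $p_K$ may have degree strictly less than $r$; in that case it still lies in $\mathcal{T}(1\pm t)_{\deg p_K} \subseteq \mathcal{T}(1\pm t)_r$ by padding with the zero SOS multipliers, so there is no issue.) Then Theorem~\ref{Thm:univariate} applies directly to $p = p_K$, giving $K_{p_K} \in \mathcal{T}(1\pm x, 1\pm y)_{r,r}$. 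Since $K = K_{p_K}$, this is exactly the desired conclusion $K \in \mathcal{T}(1\pm x,1\pm y)_{r,r}$.

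I do not anticipate any real obstacle here: every ingredient has been assembled in the preceding text, and the proof is essentially a two-line chain of implications (nonnegative kernel $\Rightarrow$ nonnegative associated univariate polynomial $\Rightarrow$ membership in $\mathcal{T}(1\pm t)_r$ by Fekete--Markov--Luk\'acz $\Rightarrow$ membership of the kernel in the bivariate truncated preordering by Theorem~\ref{Thm:univariate}). The only point requiring a moment of care is the bookkeeping of degrees --- confirming that the degree of $p_K$ in $t$ is at most $r$ (immediate, since only $T_0,\dots,T_r$ appear) and that Theorem~\ref{Thm:univariate} is being applied with the correct truncation parameter --- but this is routine. A one-sentence remark that the argument is fully symmetric in $x$ and $y$ (one could equally set $p_K(t) = K(1,t)$) could be added for completeness, though it is not needed.

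Concretely, the write-up would read: \emph{Proof.} Let $p(t) := K(t,1)$. Since $T_k(1) = 1$ for every $k$, we have $p(t) = \sum_{k=0}^r c_k T_k(t)$, so $p \in \R[t]_r$ and, by Lemma~\ref{lem.id}, $K_p = K$. As $|xy \pm \sqrt{(1-x^2)(1-y^2)}| \le 1$ for $x,y \in [-1,1]$, nonnegativity of $K = K_p$ on $[-1,1]^2$ forces $p$ to be nonnegative on $[-1,1]$ (evaluate at $y=1$). By Theorem~\ref{THM:MarkovLukacz}, $p \in \mathcal{T}(1\pm t)_r$. Theorem~\ref{Thm:univariate} then gives $K = K_p \in \mathcal{T}(1\pm x,1\pm y)_{r,r}$. \qed
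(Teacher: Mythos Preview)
Your proposal is correct and follows exactly the same approach as the paper: set $p(t)=K(t,1)$, note $K=K_p$ and $p\ge 0$ on $[-1,1]$, apply Theorem~\ref{THM:MarkovLukacz} to get $p\in\mathcal{T}(1\pm t)_r$, then conclude via Theorem~\ref{Thm:univariate}. Your write-up merely spells out a couple of justifications (e.g.\ $T_k(1)=1$, the degree bookkeeping) that the paper leaves implicit.
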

\begin{proof}
Let $p(t)=K(t,1)$. We have then $K = K_p$ and $p$ is nonnegative on $[-1,1]$.  \rev{By Theorem~\ref{THM:MarkovLukacz}, we have} $p \in \mathcal{T}(1\pm t)_{r}$, and by Theorem~\ref{Thm:univariate} the statement follows.
\end{proof}

\subsection{Multivariate case}
The natural question is whether the results from the previous section translate to the general case. Given an $n$-variate polynomial
$p \in \R[\varu]$, it is natural to define
\[
K_p(\x,\y) = \tfrac 1{2^n}\sum p(x_1y_1\pm \sqrt{(1-x_1^2)(1-y_1^2)},\dots,x_ny_n\pm \sqrt{(1-x_n^2)(1-y_n^2)}),
\]
where the summation is over all the possible $2^n$ sign combinations.

As in the univariate case, from Lemma~\ref{lem.id} we obtain that if $p(\varu) = \sum_{\alpha} p_\alpha T_\alpha(\varu)$ then
$K_p(\x,\y) = \sum_{\alpha} p_\alpha T_\alpha(\x)T_\alpha(\y)$. Moreover, every kernel is of this form,
 as, given a kernel $K(\x,\y) = \sum_{\alpha}c_\alpha T_\alpha(\x)T_\alpha(\y)$, we have $K = K_p$ for
 $p(\varu) = K(\varu,\mathbf{1})$, where $\mathbf{1}$ denotes the all-ones vector.

We have then,  $p$ nonnegative on $[-1,1]^n$ if and only if  $K_p$ non-negative on $[-1,1]^{2n}$.
 Next we prove $p \in \mathcal{T}(1\pm u_1,\dots,1\pm u_n)_{r}$ implies
  $K_p \in \mathcal{T}(1\pm x_1,\dots,1\pm x_n;1 \pm y_1,\dots,1 \pm y_n)_{r,r}.$ The proof is basically the same as
   in the univariate case but requires induction. \rev{To do this, for each $i=1,\dots,n$ we
define the operator $\kappa^i:\R[\varu,\x,\y] \to \R[\varu,\x,\y]$} by
\rev{\begin{align*}
p(u_1,\dots,u_{i},\dots,u_n,\x,\y)
&\mapsto \kappa^i(p)(\varu,\x,\y)\\
&\qquad := \tfrac 1{2}\left(p(u_1,\dots,u_{i-1},x_iy_i +\sqrt{(1-x_i^2)(1-y_i^2)},u_{i+1},\dots,u_n,\x,\y)\right. \\
&\qquad\qquad + \left.p(u_1,\dots,u_{i-1},x_iy_i -\sqrt{(1-x_i^2)(1-y_i^2)},u_{i+1},\dots,u_n,\x,\y)\right).
\end{align*}
}
Now, given $p \in \R[\varu]$ define $P^i \in \R[\varu,\x,\y]$ for $i=0,\dots,n$ by
$P^0 = p$, and $P^{i} = \kappa^{i}(P^{i-1})$ for $i=1,\dots,n$.\rev{ Notice that in the $i$-th step the variable $u_i$ is eliminated and the variables $x_i$ and $y_i$ introduced. Moreover,} we obtain that $K_p(\x,\y) = P^n$.
Now we rewrite the statement of Lemma~\ref{lem:inPre} in a bit more general form.
\begin{lemma}\label{lem:inPreGen}
\rev{
Let $q \in \R[\varu,\x,\y]$.
Then, there are $q_0 \in \R[\varu,\x,\y]$ and $q_1 \in \R[\varu,\x,\y]$ such that
\begin{equation}\label{eq:newSOStrans}
q(u_1,\dots, u_{i-1}, x_iy_i \pm \sqrt{(1-x_i^2)(1-y_i^2)},u_{i+1},\dots,u_n,\x,\y) = q_0(\varu,\x,\y) \pm \sqrt{(1-x_i^2)(1-y_i^2)} q_1(\varu,\x,\y),
\end{equation}
with $\deg_{\varu \x}(q_0) \le \deg_{\varu \x}(q)$, $\deg_{\varu \y}(q_0) \le \deg_{\varu \y}(q)$,
 $\deg_{\varu \x}(q_1) \le \deg_{\varu \x}(q) - 1$, and $\deg_{\varu \y}(q_1) \le \deg_{\varu \y}(q) - 1$.

Moreover, we have
\begin{align*}
 \kappa^i(q^2) & =  q_0^2 + (1-x_i^2)(1-y_i^2)q_1^2,\\
 2\kappa^i((1 - u_i)q^2) &=  (1-x_i)(1+y_i)(q_0 - (1+x_i)(1-y_i) q_1)^2  + (1+x_i)(1-y_i)(q_0 - (1-x_i)(1+y_i)q_1)^2,\\
  2\kappa^i((1 + u_i)q^2) &= (1+x_i)(1+y_i)(q_0 + (1-x_i)(1-y_i) q_1)^2 +  (1-x_i)(1-y_i)(q_0 + (1+x_i)(1+y_i)q_1)^2,\\
    \kappa^i((1 - u_i^2)q^2) &= (1-y_i^2)(x_iq_0 - y_i(1-x_i^2)q_1)^2 +  (1-x_i^2)(y_iq_0 - x_i(1-y_i^2)q_1)^2.
 \end{align*}
}
\end{lemma}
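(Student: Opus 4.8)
The plan is to prove Lemma~\ref{lem:inPreGen} by a direct generalization of the argument already carried out in Lemma~\ref{lem:inPre}, treating the variables $u_1,\dots,u_{i-1},u_{i+1},\dots,u_n$ together with all of $\x,\y$ as mere parameters and only the single variable $u_i$ as ``active''. First I would establish the decomposition \eqref{eq:newSOStrans}: writing $q$ as a polynomial in $u_i$ with coefficients in $\R[u_1,\dots,\widehat{u_i},\dots,u_n,\x,\y]$, substitute $u_i \mapsto x_iy_i \pm s_i$ where $s_i := \sqrt{(1-x_i^2)(1-y_i^2)}$, and expand using the binomial theorem. Since $s_i^2 = (1-x_i^2)(1-y_i^2)$ is a genuine polynomial, every even power of $s_i$ contributes to $q_0$ and every odd power to $s_i q_1$; the sign pattern $\pm$ is inherited exactly as in the univariate case. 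The degree bookkeeping is then routine: a monomial $u_i^k$ of $q$ produces, after substitution, terms $(x_iy_i)^{k-2j} s_i^{2j}$ in $q_0$ and $(x_iy_i)^{k-2j-1}s_i^{2j+1}$ split off into $q_1$, and since $s_i^2$ has degree $2$ in $x_i$ and $2$ in $y_i$ one checks $\deg_{\varu\x} q_0 \le \deg_{\varu\x} q$, with $q_1$ losing one degree in $\x$ and in $\y$ because it is multiplied by a further factor of $s_i$; the parameters $u_1,\dots,\widehat{u_i},\dots,u_n$ contribute identically on both sides, so their degrees are unaffected.

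Next I would verify the four identities for $\kappa^i$ applied to $q^2$, $(1\pm u_i)q^2$, and $(1-u_i^2)q^2$. These follow by substituting \eqref{eq:newSOStrans} into the definition of $\kappa^i$ and repeating \emph{verbatim} the algebra of Lemma~\ref{lem:inPre}: for $q^2$ one gets $q(\cdots\pm s_i\cdots)^2 = q_0^2 + s_i^2 q_1^2 \pm 2s_i q_0 q_1$, and averaging the $+$ and $-$ versions kills the cross term, yielding $\kappa^i(q^2) = q_0^2 + (1-x_i^2)(1-y_i^2)q_1^2$. For $(1-u_i)q^2$ one has the extra factor $1 - (x_iy_i \pm s_i)$, and using the identity $(a-b)(c+d)+(a+b)(c-d) = 2ac - 2bd$ together with the substitution $A = (1-x_i)(1+y_i)$, $B = (1+x_i)(1-y_i)$ (so $A+B = 2(1-x_iy_i)$ and $AB = (1-x_i^2)(1-y_i^2)$) produces the stated sum $A(q_0 - Bq_1)^2 + B(q_0 - Aq_1)^2$. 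The $(1+u_i)q^2$ case is symmetric (replace $x_i \mapsto -x_i$, equivalently swap the roles appropriately), and for $(1-u_i^2)q^2$ one uses $A = 1-x_i^2$, $B = 1-y_i^2$ with $1 - (x_iy_i)^2 = AB + x_i^2 B + y_i^2 A$, exactly as in part 3 of Lemma~\ref{lem:inPre}. The only new point relative to Lemma~\ref{lem:inPre} is that $q_0, q_1$ now also carry the extra parameters $\varu$ (minus $u_i$) and the $\x,\y$ coordinates other than $x_i,y_i$, but since those enter purely as scalars in every manipulation, nothing changes.

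The main obstacle I anticipate is not any single hard step but rather the degree bookkeeping: one must be careful that $\kappa^i$ introduces $x_i,y_i$ with controlled degree and does not inflate the degree in the remaining $\varu$, $\x$, or $\y$ variables, since this is precisely what will later let the induction $P^0 = p$, $P^i = \kappa^i(P^{i-1})$ terminate with $K_p \in \mathcal{T}(1\pm x_1,\dots,1\pm x_n; 1\pm y_1,\dots,1\pm y_n)_{r,r}$. Concretely, when $\kappa^i$ is applied inside the inductive argument to a summand of the form $(\text{product of }1\pm u_j, 1\pm x_j, 1\pm y_j)\cdot\sigma^2$, the factor $\sigma$ plays the role of $q$ and one needs $\deg_{\varu\x}(\kappa^i(\text{summand})) \le \deg_{\varu\x}(\text{summand})$; this is exactly what the degree bounds on $q_0,q_1$ in \eqref{eq:newSOStrans} guarantee, because each squared factor $(q_0 - B q_1)^2$ etc.\ has $\x$-degree at most $2\max\{\deg_\x q_0, 1 + \deg_\x q_1\} \le 2\deg_\x q$, while the linear prefactors $1\pm x_i$ contribute the needed extra degrees in $x_i$ only. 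I would therefore state the degree hypotheses in \eqref{eq:newSOStrans} with enough slack to make this propagation transparent, and then the multivariate analogue of Theorem~\ref{Thm:univariate} follows by applying Lemma~\ref{lem:inPreGen} $n$ times, once per variable, using linearity of each $\kappa^i$.
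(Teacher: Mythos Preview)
Your proposal is correct and follows essentially the same approach as the paper's own proof, which simply states that $q_0,q_1$ are obtained by expanding and collecting terms (with the degree bounds following), and that the four identities are proven by repeating the algebra of Lemma~\ref{lem:inPre} line by line with \eqref{eq:newSOStrans} in place of \eqref{eq:SOStrans}. Your writeup supplies considerably more detail on the degree bookkeeping and even anticipates the inductive use of the lemma in Proposition~\ref{prop:equiv formulations}, but the underlying argument is identical.
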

\begin{proof}\rev{ First notice that by expanding and collecting terms, we obtain $q_0$ and $q_1$ satisfying all the degree requirements.}
 The rest of the statement is proven  following the proof of Lemma~\ref{lem:inPre} line by line  after equation~\eqref{eq:SOStrans}, but only using equation~\eqref{eq:newSOStrans} instead of~\eqref{eq:SOStrans}.
\end{proof}

Now we can prove the main result of this section.

\begin{proposition}
\label{prop:equiv formulations} \rev{Let $p \in \R[\varu]$ be given. Then,}
$p \in \mathcal{T}(1\pm u_1,\dots,1\pm u_n)_r$ \rev{if and only if} $K_p \in \mathcal{T}(1\pm x_1,\dots,1 \pm x_n;1\pm y_1,1\pm y_n)_{r,r}$.
\end{proposition}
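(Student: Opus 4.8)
The plan is to prove both directions of the equivalence, leaning on the operators $\kappa^i$ and the identities in Lemma~\ref{lem:inPreGen}, with the forward direction ($p \in \mathcal{T}(1\pm u_1,\dots,1\pm u_n)_r \Rightarrow K_p \in \mathcal{T}(1\pm x_1,\dots,1\pm x_n;1\pm y_1,\dots,1\pm y_n)_{r,r}$) being the substantive one. First I would observe that $K_p = P^n = (\kappa^n \circ \cdots \circ \kappa^1)(p)$ and that each $\kappa^i$ is linear, so it suffices to track what happens to a single term $\sigma_{I^-,I^+}(\varu) \prod_{i\in I^-}(1-u_i)\prod_{i\in I^+}(1+u_i)$ with $\sigma_{I^-,I^+} \in \Sigma[\varu]$ and the appropriate degree bound in each $u_i$ (namely $|I^-|+|I^+|+\deg_\varu \sigma \le r$). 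Write $\sigma$ as a sum of squares $\sum_j q_j^2$; by linearity it is enough to handle a single term $q^2 \prod_{i\in I^-}(1-u_i)\prod_{i\in I^+}(1+u_i)$.

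The key step is an induction on $n$ (equivalently, on the number of $\kappa^i$ applied). The induction hypothesis, stated for a general polynomial in the mixed variables $\R[\varu,\x,\y]$, should say: if $Q \in \R[\varu,\x,\y]$ is in the truncated preordering $\mathcal{T}(1\pm u_1,\dots,1\pm u_n; 1\pm x_1,\dots; 1\pm y_1,\dots)$ with the bidegree bounds $r_1$ in the $(\varu,\x)$-variables and $r_2$ in the $(\varu,\y)$-variables, then $\kappa^i(Q)$ lies in the same kind of preordering with $u_i$ eliminated, $x_i,y_i$ introduced, and the bidegree bounds preserved. To prove one step of this, I apply Lemma~\ref{lem:inPreGen} to the polynomial $q$ appearing inside a squared term: the four displayed identities there convert $\kappa^i(q^2)$, $\kappa^i((1\pm u_i)q^2)$, and $\kappa^i((1-u_i^2)q^2)$ into sums of squares times products of $(1\pm x_i),(1\pm y_i)$ factors, and the degree bookkeeping in Lemma~\ref{lem:inPreGen} ($\deg_{\varu\x} q_0 \le \deg_{\varu\x} q$, $\deg_{\varu\x} q_1 \le \deg_{\varu\x} q - 1$, and symmetrically in $\y$) exactly matches the degree budget: a factor $(1-u_i)$ costs one unit in $\deg_{\varu}$, and after $\kappa^i$ it becomes $(1-x_i)(1+y_i)$ costing one unit each in $\deg_{\varu\x}$ and $\deg_{\varu\y}$, while $q_0 - (1+x_i)(1-y_i)q_1$ has degree at most $\deg q$ in each of the relevant variable groups. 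Iterating $\kappa^1,\dots,\kappa^n$ starting from $p \in \mathcal{T}(1\pm u_1,\dots,1\pm u_n)_r$ (viewed in $\R[\varu,\x,\y]$ with no $\x,\y$ dependence, so the bidegree bounds are both $r$) yields $K_p = P^n \in \mathcal{T}(1\pm x_1,\dots,1\pm x_n;1\pm y_1,\dots,1\pm y_n)_{r,r}$.

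For the converse, I would use the substitution already noted in the text: if $K_p(\x,\y) = \sum_\alpha c_\alpha T_\alpha(\x)T_\alpha(\y) \in \mathcal{T}(1\pm x_1,\dots;1\pm y_1,\dots)_{r,r}$, then setting $\y = \mathbf{1}$ recovers $p(\varu) = K_p(\varu,\mathbf{1})$ (since $T_k(1)=1$), and substituting $y_i = 1$ sends $1-y_i \mapsto 0$, $1+y_i \mapsto 2$, and any sum of squares in the $\y$-variables to a sum of squares. Concretely, each term $\sigma_{I^-_\x,I^+_\x,I^-_\y,I^+_\y}\prod(1-x_i)\prod(1+x_i)\prod(1-y_i)\prod(1+y_i)$ either vanishes (if $I^-_\y \ne \emptyset$) or becomes $2^{|I^+_\y|}\,\sigma_{I^-_\x,I^+_\x,\emptyset,I^+_\y}(\varu,\mathbf{1})\prod_{i\in I^-_\x}(1-u_i)\prod_{i\in I^+_\x}(1+u_i)$, which is a sum-of-squares multiple of the $u$-products, with $\deg_\varu$ of the coefficient still bounded by $r_1 - |I^-_\x| - |I^+_\x|$. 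Summing gives a representation of $p$ in $\mathcal{T}(1\pm u_1,\dots,1\pm u_n)_r$.

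The main obstacle I anticipate is the degree bookkeeping in the inductive step: one must be careful that applying $\kappa^i$ does not blow up the degree in variables $u_j$ with $j \ne i$ (it does not, since $\kappa^i$ only touches $u_i$), and that the split $q = q_0 \pm \sqrt{(1-x_i^2)(1-y_i^2)}\,q_1$ genuinely keeps $\deg_{\varu\x} q_0$ and $\deg_{\varu\y} q_0$ below the bidegree of the term $(1\pm u_i)q^2$ once $u_i$ is replaced — this is precisely what the degree clauses of Lemma~\ref{lem:inPreGen} guarantee, so the argument reduces to invoking that lemma carefully and summing. Everything else is linearity and the already-established facts that $K_{(\cdot)}$ is linear, that $K_{T_\alpha} = T_\alpha(\x)T_\alpha(\y)$, and that $p \mapsto K_p$, $K \mapsto K(\cdot,\mathbf 1)$ are mutually inverse on the relevant spaces.
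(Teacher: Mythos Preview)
Your proposal is correct and follows essentially the same approach as the paper: the forward direction is proved by writing $K_p=(\kappa^n\circ\cdots\circ\kappa^1)(p)$, using linearity and the factorization $\kappa^i(q_1q_2)=q_2\kappa^i(q_1)$ when $q_2$ does not involve $u_i$, and then invoking the four identities of Lemma~\ref{lem:inPreGen} together with its degree estimates to carry the bidegree-$r$ preordering representation through each step; the converse is obtained by evaluating at $\y=\mathbf 1$. Your discussion of the degree bookkeeping is in fact more explicit than the paper's, which simply asserts the inductive form of $P^i$ ``by linearity of $\kappa^i$ and Lemma~\ref{lem:inPreGen}''.
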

\begin{proof} \rev{Assume $K_p \in \mathcal{T}(1\pm x_1,\dots,1 \pm x_n;1\pm y_1,1\pm y_n)_{r,r}$, then as $p(\varu) = K(\varu,\mathbf{1})$ we obtain $p \in \mathcal{T}(1\pm u_1,\dots,1\pm u_n)_r$.
Now, we prove the other direction. Assume $p \in \mathcal{T}(1\pm u_1,\dots,1\pm u_n)_r$ \revv{is given}.}
Define $P^i \in \R[\varu,\x,\y]$ for $i=0,\dots,n$ by $P^0 = p$, and $P^{i} = \kappa^{i}(P^{i-1})$ for $i=1,\dots,n$. We obtain then that $K_p(\x,\y) = P^n$.
\rev{
Write
\[P^0(\varu,\x,\y) = p(\varu) = \sum_{J^-,J^+ \subseteq [n]}
\sigma_{J^-,J^+}(\varu)\prod_{j \in J^-}(1 - u_j)\prod_{j \in J^+}(1 + u_j),\]
where, for all $J^-,J^+ \subseteq [n]$,  $ \sigma_{J^-,J^+} \in \Sigma[\varu]$, and
$|J^-|+|J^+|+ \mbox{deg}\sigma_{J^-,J^+} \le r.$

Notice that for all $i\in [n]$ and $q_1,q_2 \in \R[\varu,\x,\y]$ such that $u_i$ does not appear in $q_2$ (that is $\deg_{u_i}(q_2) =  0$), we have
from the definition of $\kappa^i$, that  $\kappa^i(q_1q_2) = q_2\kappa^i(q_1)$. Thus by linearity of $\kappa^i$ and  Lemma~\ref{lem:inPreGen}  we obtain by induction,
\[
P^i = \sum_{\substack{J^-,J^+ \subseteq [n]\setminus [i]\\I_x^-,I_x^+ \subseteq [i]\\I_y^-,I_y^+\subseteq[i]}}\sigma_{J^-,J^+,I_x^-,I_x^+,I_y^-,I_y^+} \prod_{j \in J^-}(1 - u_j)\prod_{j \in J^+}(1 + u_j),\prod_{j \in I^-_x}(1 - x_j)\prod_{j \in I^+_x}(1 + x_j)\prod_{j \in I^-_y}(1 - y_j)\prod_{j \in I^+_y}(1 + y_j),\]
where, for all $J^-,J^+ \subseteq [n]\setminus [i]$, all $I_x^-,I_x^+ \subseteq [i]$ and all $I_y^-,I_y^+\subseteq[i]$ we have  $ \sigma_{J^-,J^+,I_x^-,I_x^+,I_y^-,I_y^+} \in \Sigma[\varu,\x,\y]$, and
$|J^-|+|J^+|+ |I_x^-| + |I_x^+| + \deg_{\varu \x}\sigma_{J^-,J^+,I_x^-,I_x^+,I_y^-,I_y^+} \le r$ and $|J^-|+|J^+|+ |I_y^-| + |I_y^+| + \deg_{\varu \y}\sigma_{J^-,J^+,I_x^-,I_x^+,I_y^-,I_y^+} \le r$.

In particular,  $K_p = P^n \in \mathcal{T}(1\pm x_1,\dots,1 \pm x_n;1\pm y_1,1\pm y_n)_{r,r}$.
}
\end{proof}

\rev{
We now conclude the section with \revv{a  proof of Theorem \ref{proposition:Theta_SDP_bound}}.

\subsection*{Proof of  Theorem \ref{proposition:Theta_SDP_bound}}
\begin{proof}
Assume that we have a feasible solution of problem \eqref{SDP_bound_Theta}, i.e.\ we have a polynomial in the truncated pre-ordering, say
\[
\mathbf{x} \mapsto 1+ \sum_{\alpha \in \N^n_d\setminus \{\mathbf{0}\}}  2^{\omega(\alpha)} p_{{\alpha}}  T_{\alpha}(\mathbf{x})  \in \mathcal{T}(1\pm x_1,\dots,1\pm x_n)_{r}.
\]
By Proposition \ref{prop:equiv formulations}, we then have that
\[
(\mathbf{x},\y) \mapsto 1+ \sum_{\alpha \in \N^n_d\setminus \{\mathbf{0}\}} p_{{\alpha}} \hat T_{\alpha}(\mathbf{x})\hat T_{\alpha}(\mathbf{y})
   \in \mathcal{T}(1\pm x_1,\dots,1\pm x_n;1\pm y_1,\dots,1\pm y_n)_{r,r}.
\]
By Proposition \ref{prop:cubature}, we have that this kernel satisfies property \eqref{kernel_preorder_condition}.
Thus this kernel is feasible for the problem \eqref{SDP:kappa_r}. The required result follows.
\end{proof}
}

\section{Concluding remarks}
\label{sec:conclusion}
Our main results may best be framed in terms of error bounds for the SDP hierarchy \eqref{Schmuedgen_SDP2}.
In particular, we have shown the following sequence of inequalities:
\begin{eqnarray*}
   f_\min - \hat f_{(r)} & \le &\Theta^{r}_{n,d}\|f\|_{1, T} \quad \mbox{(by Theorem \ref{thm:error_bound1})} \\
   &\le & \frac {\pi^2 d^2n^2}{r^2}\|f\|_{1, T} \quad \mbox{(by Proposition \ref{prop:Theta_bound_nr} if $n|r$ and $r  \ge \pi dn$)} \\
    &\le & \frac {\pi^2 d^2n^2}{r^2}\|f\|_{1} \quad \mbox{(by \eqref{one_norm_conversion})} \\
   &\le &  \frac {\pi^2 d^2n^2}{r^2}{n+d \choose d}^{ \frac{1}{2}}C_{d}\|f\|_{\ell_\infty} \quad \mbox{(by Bohnenblust-Hille ineq.\ \eqref{Bohnenblust-Hille})} \\
   &\le & \frac {\pi^2 d^2n^2}{r^2}{n+d \choose d}^{ \frac{1}{2}}C_{d}(f_\max - f_\min) \quad \mbox{(w.l.o.g.\ if $f(0) = 0$)}. \\
\end{eqnarray*}
We may readily compare the outermost inequality to the result by Laurent and Slot \cite{Laurent2021}, as reproduced in Theorem \ref{thm:Laurent_Slot2},
to  see that we obtain a very similar result. The added value of our analysis is therefore in the first inequality, since,
 for special classes of polynomials, $\|f\|_{1,T}$ may relate more favorably to $\|f\|_{\ell_\infty}$ than in the general case.
 \rev{
 We should mention that this is not the first time where changing or comparing norms gives new insights in polynomial optimization. For example, in \cite{Baldi_et_al}
    the authors improve the known dependence of the general Putinar's theorem \cite{Putinar} on the number of variables, by switching from the supremum norm to the $1$-norm in the Bernstein basis.
 }

The object $\Theta^{r}_{n,d}$ is of independent interest, as it corresponds to families of positive approximation operators, and may be upper bounded
by an SDP as in \revv{Theorem \ref{proposition:Theta_SDP_bound}}. A key question is if one could find an analytical expression for (an upper bound on)
$\Theta^{r}_{n,d}$ in terms of $r,n,d$ in a way that does not involve the product of univariate (Jackson) kernels.

Our research links two, seemingly different, topics:
\begin{enumerate}
  \item
  Deriving error bounds for SDP hierarchies for polynomial optimization, in the spirit of \cite{Laurent2021} and \cite{Bach_Rudi}.
  \item
  Constructing optimal multivariate approximation kernels using SDP, in the spirit of \cite{Kirchsner2023}.
\end{enumerate}
In particular, we  showed how kernels that are optimized for $1$-norm polynomial approximation on the hypercube yield error bounds
for SDP hierarchies of the Schm\"udgen type.

A natural question is if our analysis may be extended to SDP hierarchies that use the Putinar Positivstellensatz \cite{Putinar} instead of the one by Schm\"udgen.
This is of theoretical as well as computational interest, since the famous Lasserre SDP hierachy \cite{lass-siopt-01} for polynomial optimization is based on the Putinar Positivstensatz.
In particular, Baldi and Slot \cite{Baldi_Slot} have recently extended the results by Laurent and Slot \cite{Laurent2021} to obtain the best-known effective version of the Putinar Positivstennsatz. It would be interesting to investigate if our approach allows a similar extension. \rev{The main obstacle is that our key technical result, namely Corollary \ref{cor:Juan}, only applies to pre-orderings, and not to quadratic modules.}

\rev{
\subsection*{Acknowledments}
The authors would like to thank Monique Laurent for valuable feedback on the first version of this manuscript, and two anonymous referees
for their helpful comments that led to an improved presentation of the results.
}

\section*{Appendix: numerical SDP bounds}
In this appendix we present the SDP upper bound \eqref{SDP_bound_Theta} on $\Theta_{n,d}^r$ for different values of $(n,r,d)$, as indicated in the tables.
We may mention that, when $r$ is a multiple of $n$ in the tables, the SDP upper
 bound on $\Theta_{n,d}^r$ is strictly lower than the value obtained from products of univariate Jackson kernels.
 In particular, for the appropriate values the tables where $r$ is a multiple of $n$, the inequality in Proposition \ref{prop:Theta_bound_nr} is strict.
 The values in the tables were computed using the SDP solver MOSEK \cite{mosek9} under Matlab.
\begin{table}[h!]
\centering
\begin{adjustbox}{width=\textwidth}
\begin{tabular}{ ccccccccccccccccccccccccccccccc}
 & d=1 & 2 & 3 & 4 & 5 & 6 & 7 & 8 & 9 & 10 & 11 & 12 & 13 & 14 & 15  \\
 \hline
r=1 & 0.5000 \\
2 & 0.2929 & 0.5556  \\
3 & 0.1910 & 0.5001 & 0.6001  \\
4 & 0.1340 & 0.3320 & 0.5284 & 0.6214  \\
5 & 0.0991 & 0.2929 & 0.5001 & 0.5556 & 0.6405  \\
6 & 0.0762 & 0.2159 & 0.3585 & 0.5177 & 0.5783 & 0.6527  \\
7 & 0.0604 & 0.1910 & 0.3140 & 0.5001 & 0.5374 & 0.6001 & 0.6641  \\
8 & 0.0490 & 0.1502 & 0.2929 & 0.3723 & 0.5123 & 0.5556 & 0.6108 & 0.6723  \\
9 & 0.0406 & 0.1340 & 0.2309 & 0.3320 & 0.5001 & 0.5284 & 0.5707 & 0.6214 & 0.6802  \\
10 & 0.0341 & 0.1101 & 0.2050 & 0.3066 & 0.3834 & 0.5090 & 0.5418 & 0.5848 & 0.6310 & 0.6862  \\
11 & 0.0291 & 0.0991 & 0.1910 & 0.2929 & 0.3461 & 0.5001 & 0.5217 & 0.5556 & 0.6001 & 0.6405 & 0.6921  \\
12 & 0.0251 & 0.0839 & 0.1592 & 0.2389 & 0.3202 & 0.3909 & 0.5070 & 0.5336 & 0.5668 & 0.6070 & 0.6467 & 0.6968  \\
13 & 0.0219 & 0.0762 & 0.1434 & 0.2160 & 0.3027 & 0.3587 & 0.5001 & 0.5177 & 0.5447 & 0.5793 & 0.6152 & 0.6542 & 0.7015  \\
14 & 0.0193 & 0.0660 & 0.1342 & 0.2012 & 0.2930 & 0.3341 & 0.4169 & 0.5056 & 0.5311 & 0.5606 & 0.6041 & 0.6271 & 0.6960 & 0.7603  \\
15 & 0.0171 & 0.0604 & 0.1179 & 0.1925 & 0.2570 & 0.3188 & 0.3933 & 0.5001 & 0.5159 & 0.5409 & 0.5772 & 0.6120 & 0.6311 & 0.6904 & 0.9302  \\
16 & 0.0153 & 0.0552 & 0.1128 & 0.1830 & 0.2544 & 0.3114 & 0.3764 & 0.4940 & 0.5147 & 0.5434 & 0.5781 & 0.6136 & 0.6471 & 0.8190 & 0.9657  \\
17 & 0.0137 & 0.0539 & 0.1124 & 0.1804 & 0.2549 & 0.3061 & 0.3706 & 0.4850 & 0.5072 & 0.5323 & 0.5644 & 0.5944 & 0.6284 & 0.7332 & 0.8572  \\
18 & 0.0137 & 0.0508 & 0.1059 & 0.1712 & 0.2273 & 0.2978 & 0.3590 & 0.4652 & 0.5030 & 0.5282 & 0.5776 & 0.5872 & 0.6259 & 0.6649 & 0.8315  \\
\hline
 \end{tabular}
 \end{adjustbox}
\caption{The SDP upper bound \eqref{SDP_bound_Theta} on $\Theta_{n,d}^r$ for $n = 1$}
\end{table}
\begin{table}[h!]
\centering
\begin{adjustbox}{width=\textwidth}
\begin{tabular}{ ccccccccccccccccccccccccccccccc}
 & d=1 & 2 & 3 & 4 & 5 & 6 & 7 & 8 & 9 & 10 & 11 & 12 & 13 & 14 & 15 & \\
 \hline
 r=1 & 0.7500  \\
2 & 0.5001 & 0.7648  \\
3 & 0.3455 & 0.7501 & 0.8278  \\
4 & 0.2501 & 0.5295 & 0.7575 & 0.8338  \\
5 & 0.1883 & 0.5001 & 0.7501 & 0.8025 & 0.8620  \\
6 & 0.1465 & 0.3720 & 0.5798 & 0.7566 & 0.8212 & 0.8640  \\
7 & 0.1170 & 0.3455 & 0.5263 & 0.7501 & 0.7859 & 0.8401 & 0.8805  \\
8 & 0.0955 & 0.2706 & 0.5001 & 0.5920 & 0.7557 & 0.7968 & 0.8459 & 0.8800  \\
9 & 0.0794 & 0.2501 & 0.4051 & 0.5537 & 0.7501 & 0.7776 & 0.8142 & 0.8566 & 0.8918  \\
10 & 0.0670 & 0.2038 & 0.3668 & 0.5167 & 0.6166 & 0.7549 & 0.7882 & 0.8219 & 0.8633 & 0.8912  \\
11 & 0.0573 & 0.1883 & 0.3455 & 0.5001 & 0.5722 & 0.7501 & 0.7712 & 0.8025 & 0.8359 & 0.8708 & 0.8999  \\
12 & 0.0496 & 0.1581 & 0.2917 & 0.4140 & 0.5378 & 0.6191 & 0.7542 & 0.7801 & 0.8123 & 0.8413 & 0.8747 & 0.8989  \\
13 & 0.0433 & 0.1465 & 0.2657 & 0.3852 & 0.5132 & 0.5884 & 0.7501 & 0.7674 & 0.7925 & 0.8222 & 0.8503 & 0.8795 & 0.9063  \\
14 & 0.0381 & 0.1259 & 0.2501 & 0.3592 & 0.5001 & 0.5501 & 0.6340 & 0.7547 & 0.7761 & 0.8018 & 0.8304 & 0.8581 & 0.8833 & 0.9058  \\
15 & 0.0338 & 0.1170 & 0.2175 & 0.3455 & 0.4288 & 0.5294 & 0.5986 & 0.7501 & 0.7645 & 0.7865 & 0.8104 & 0.8403 & 0.8606 & 0.8902 & 0.9283  \\
16 & 0.0302 & 0.1025 & 0.1997 & 0.3039 & 0.4001 & 0.5114 & 0.5678 & 0.6644 & 0.7544 & 0.7720 & 0.7983 & 0.8213 & 0.8535 & 0.8728 & 0.9043  \\
17 & 0.0271 & 0.0955 & 0.1890 & 0.2810 & 0.3759 & 0.5007 & 0.5440 & 0.6154 & 0.7502 & 0.7646 & 0.7917 & 0.8089 & 0.8388 & 0.8604 & 0.8881  \\
18 & 0.0245 & 0.0877 & 0.1678 & 0.2659 & 0.3600 & 0.4742 & 0.5336 & 0.6081 & 0.6705 & 0.7560 & 0.7771 & 0.8029 & 0.8307 & 0.8489 & 0.8624  \\
\hline
 \end{tabular}
\end{adjustbox}
\caption{The SDP upper bound \eqref{SDP_bound_Theta} on $\Theta_{n,d}^r$ for $n = 2$}
\end{table}
\begin{table}[h!]
\centering
\begin{adjustbox}{width=\textwidth}
\begin{tabular}{ ccccccccccccccccccccccccccccccc}
 & d=1 & 2 & 3 & 4 & 5 & 6 & 7 & 8 & 9 & 10 & 11 & 12 & 13 & 14 & 15 & \\
 \hline
r=1& 0.8333 \\
2 & 0.5918 & 0.8401  \\
3 & 0.4727 & 0.8334 & 0.9026  \\
4 & 0.3546 & 0.6331 & 0.8500 & 0.9048  \\
5 & 0.2787 & 0.6092 & 0.8334 & 0.8889 & 0.9319  \\
6 & 0.2197 & 0.4952 & 0.6967 & 0.8408 & 0.9016 & 0.9322  \\
7 & 0.1785 & 0.4727 & 0.6532 & 0.8334 & 0.8795 & 0.9201 & 0.9464  \\
8 & 0.1467 & 0.3804 & 0.6132 & 0.7150 & 0.8410 & 0.8847 & 0.9246 & 0.9455  \\
9 & 0.1230 & 0.3582 & 0.5456 & 0.6925 & 0.8334 & 0.8732 & 0.9039 & 0.9343 & 0.9547  \\
10 & 0.1042 & 0.2961 & 0.4996 & 0.6374 & 0.7380 & 0.8402 & 0.8797 & 0.9073 & 0.9378 & 0.9537  \\
11 & 0.0895 & 0.2788 & 0.4751 & 0.6189 & 0.7097 & 0.8334 & 0.8661 & 0.8959 & 0.9197 & 0.9439 & 0.9602  \\
12 & 0.0776 & 0.2348 & 0.4129 & 0.5523 & 0.6682 & 0.7461 & 0.8410 & 0.8716 & 0.9027 & 0.9221 & 0.9462 & 0.9591  \\
13 & 0.0679 & 0.2207 & 0.3823 & 0.5289 & 0.6421 & 0.7260 & 0.8335 & 0.8621 & 0.8887 & 0.9118 & 0.9305 & 0.9503 & 0.9642  \\
14 & 0.0599 & 0.1897 & 0.3598 & 0.4921 & 0.6181 & 0.6826 & 0.7598 & 0.8402 & 0.8686 & 0.8927 & 0.9161 & 0.9316 & 0.9536 & 0.9634  \\
15 & 0.0533 & 0.1786 & 0.3201 & 0.4759 & 0.5752 & 0.6643 & 0.7374 & 0.8340 & 0.8585 & 0.8838 & 0.9029 & 0.9228 & 0.9385 & 0.9555 & 0.9677  \\
16 & 0.0477 & 0.1560 & 0.2968 & 0.4278 & 0.5470 & 0.6348 & 0.7039 & 0.7671 & 0.8479 & 0.8647 & 0.8911 & 0.9069 & 0.9282 & 0.9444 & 0.9591  \\
\hline
 \end{tabular}
\end{adjustbox}
\caption{The SDP upper bound \eqref{SDP_bound_Theta} on $\Theta_{n,d}^r$ for $n = 3$}
\end{table}

\begin{table}[h!]
\centering
\begin{adjustbox}{width=0.75\textwidth}
\begin{tabular}{ ccccccccccccccccccccccccccccccc}
 & d=1 & 2 & 3 & 4 & 5 & 6 & 7 & 8 & 9 & 10 & 11  \\
 \hline
r=1 & 0.8750 \\
 2 & 0.6465 & 0.8788  \\
3 & 0.5510 & 0.8751 & 0.9385  \\
4 & 0.4272 & 0.6943 & 0.8919 & 0.9376  \\
5 & 0.3575 & 0.6770 & 0.8751 & 0.9286 & 0.9613  \\
6 & 0.2866 & 0.5713 & 0.7679 & 0.8841 & 0.9380 & 0.9604  \\
7 & 0.2398 & 0.5530 & 0.7320 & 0.8788 & 0.9226 & 0.9543 & 0.9723  \\
8 & 0.1985 & 0.4624 & 0.6876 & 0.7828 & 0.8883 & 0.9255 & 0.9575 & 0.9713  \\
9 & 0.1689 & 0.4425 & 0.6390 & 0.7680 & 0.8779 & 0.9183 & 0.9443 & 0.9662 & 0.9784  \\
10 & 0.1436 & 0.3773 & 0.5893 & 0.7122 & 0.8087 & 0.8854 & 0.9235 & 0.9453 & 0.9679 & 0.9776  \\
11 & 0.1244 & 0.3593 & 0.5641 & 0.6976 & 0.7873 & 0.8794 & 0.9130 & 0.9388 & 0.9562 & 0.9731 & 0.9821  \\
\hline
 \end{tabular}
\end{adjustbox}
\caption{The SDP upper bound \eqref{SDP_bound_Theta} on $\Theta_{n,d}^r$ for $n = 4$}
\end{table}


	\end{document}